\date{\today}
\theoremstyle{theorem}
    \newtheorem{thm}{Theorem}[section]
    \newtheorem{theorem}{Theorem}
    \newtheorem{lemma}[thm]{Lemma}
    \newtheorem{corollary}[thm]{Corollary}
\theoremstyle{definition} 
    \newtheorem{remark}[thm]{Remark}
    \newtheorem{example}[theorem]{Example}
    \newtheorem{exercise}[theorem]{Exercise}
\def\eps{\varepsilon}
\def\smallhalf{\mbox{ $\frac{1}{2}$}}
\def \K{{\mathcal K}}
\def\G{{\mathcal G}}
\def\N{\mathbb{N}}
\def\R{\mathbb{R}}
\def\l{\left}
\def\r{\right}
\def\<{\langle}
\def\>{\rangle}
\newcommand{\one}{{\mathbf 1}}
\newcommand{\E}{\mathbb{E}}
\def\P{{\bf P}}
\def\given{\left.\vphantom{\hbox{\Large (}}\right|}
\def\cL{\mathcal{L}}
\def\cX{\mathcal{X}}
\newcommand{\erdren}{Erd\H{o}s-R\'enyi }
\newcommand{\limninf}{\lim_{n\to \infty}}
\newcommand\mnote[1]{} 
\newcommand\be{\begin{equation*}}
\newcommand\ee{\end{equation*}}
\newcommand\ben{\begin{equation}}
\newcommand\een{\end{equation}}
\newcommand\bes{\begin{eqnarray*}}
\newcommand\ees{\end{eqnarray*}}
\newcommand\bex{\begin{exercise}}
\newcommand\eex{\end{exercise}}
\newcommand\beg{\begin{example}}
\newcommand\eeg{\end{example}}
\newcommand\benu{\begin{enumerate}}
\newcommand\eenu{\end{enumerate}}
\newcommand\beit{\begin{itemize}}
\newcommand\eeit{\end{itemize}}
\newcommand\berk{\begin{remark}}
\newcommand\eerk{\end{remark}}
\newcommand\bdefn{\begin{defintion}}
\newcommand\edefn{\end{definition}}
\newcommand\bthm{\begin{theorem}}
\newcommand\ethm{\end{theorem}}
\newcommand\bprf{\begin{proof}}
\newcommand\eprf{\end{proof}}
\newcommand\blem{\begin{lemma}}
\newcommand\elem{\end{lemma}}
\newcommand{\sm}{{\raise0.3ex\hbox{$\scriptstyle \setminus$}}}
\def\bi{\mathbf{i}}
\def\bj{\mathbf{j}}
\def\bp{\mathbf{p}}
\def\bm{\mathbf{m}}
\def\l{\left}
\def\r{\right}
\def\eps{\epsilon}
\newcommand{\imply}{\;\;\;\Longrightarrow\;\;\;}
\def\CHI{\mathchoice%
{\raise2pt\hbox{$\chi$}}%
{\raise2pt\hbox{$\chi$}}%
{\raise1.3pt\hbox{$\scriptstyle\chi$}}%
{\raise0.8pt\hbox{$\scriptscriptstyle\chi$}}}
\def\smalloplus{\raise1pt\hbox{$\,\scriptstyle \oplus\;$}}
\DeclareMathOperator{\spec}{spec}
\title{On the Spectrum of Dense Random Geometric Graphs}
\author{Kartick Adhikari}
\address{Department of Electrical Engineering, Technion - Israel Institute of Technology, Haifa, 3200003, Israel}
\email{kartickmath [at] gmail.com}
\author{Robert J. Adler}
\address{Department of Electrical Engineering, Technion - Israel Institute of Technology, Haifa, 3200003, Israel}
\email{radler [at] technion.ac.il}
\author{Omer Bobrowski}
\address{Department of Electrical Engineering, Technion - Israel Institute of Technology, Haifa, 3200003, Israel}
\email{omer [at] ee.technion.ac.il}
\author{Ron Rosenthal}
\address{Department of Mathematics, Technion - Israel Institute of Technology, Haifa, 3200003, Israel}
\email{ron.ro [at] technion.ac.il}
\thanks{(1) KA was supported in part by a Zeff Fellowship, a Viterbi Fellowship and the Israel Science Foundation, Grants 2539/17 and  771/17.}
\thanks{(2)  RJA  was supported in part by  the Israel Science Foundation, Grant 2539/17}
\thanks{(3) OB was supported in part by the Israel Science Foundation, Grant 1965/19}
\thanks{(4)  RR was supported in part by the Israel Science Foundation, Grant  771/17 and the US-Israel Binational Science Foundation, Grant 2018330}
\date{\today}
\begin{document}
	
	\begin{abstract}
		 In this paper we study the spectrum of the random geometric graph $G(n,r)$, in a regime where the graph is dense and highly connected. In the \erdren $G(n,p)$ random graph it is well known that upon connectivity the spectrum of the normalized graph Laplacian is concentrated around $1$. We show that such concentration does not occur in the $G(n,r)$ case, even when the graph is dense and almost a complete graph. In particular, we show that the limiting spectral gap is strictly smaller than $1$. In the special case where the vertices are  distributed uniformly in the unit cube and $r=1$, we show that for every $0\le k \le d$ there are at least $\binom{d}{k}$ eigenvalues near $1-2^{-k}$, and the limiting spectral gap is exactly $1/2$. We also show that the corresponding eigenfunctions in this case are tightly related to the geometric configuration of the points. 
	\end{abstract}
	
	\maketitle

	\noindent{{\bf Keywords:} Random geometric graphs, spectral measure, {homological connectivity}.}	


	\section{Introduction}

	Let $G$ be an undirected graph on the vertex set $[n]:=\{1,2,\ldots,n\}$, and let $A$ be its adjacency matrix. The degree of the vertex $i$ is then $d_i = \sum_{j\ne i} A_{i,j}$, and the {\it graph Laplacian} is defined as $L := D-A$, where $D$ is the diagonal matrix with $d_1,\ldots,d_n$ on the  diagonal. The {\it symmetrically normalized graph Laplacian} is defined as
	\begin{equation}\label{eqn:lap}
	\cL := D^{-\smallhalf} L D^{-\smallhalf} = I - D^{-\smallhalf} A D^{-\smallhalf}\,.
	\end{equation}
	We are interested in the eigenvalues of $\cL$ denoted $\gamma_1 \le \gamma_2 \le \cdots \le \gamma_n$. 
	
	It is well known, cf. \cite{woess,chung_spectral_1997}, that $\gamma_i \in [0,2]$ for all $i$. In addition, $\gamma_1 = 0$, and the graph is connected if,  and only if, $\gamma_2 > 0$. The value of $\gamma_2$ is typically referred to as the {\it spectral gap} of the graph. 
	
		Graph Laplacians and  their  spectra contain important information about the connectivity structure of  graphs and the behavior of random walks on them, see for example \cite{Tan84, Dod84,AM85,AC88,BMG93,HLW06}. Graph spectra and harmonics also play key roles in various applications such as network analysis and machine learning \cite{shi_normalized_2000,von_luxburg_tutorial_2007,chung_spectral_1997}. 
	
	In this paper we study the spectrum of a {\it random geometric graph}.  Let $f:\R^d\to \R$ be a probability density function on $\R^d$, and let $\cX_n = \{X_1,\ldots, X_n\}\stackrel{i.i.d.}{\sim} f$. Let $\| \cdot \|$ be a  norm on $\R^d$. The random geometric graph $G(n,r)$ is  defined as the undirected graph with vertex set $[n]$, where $i$ is connected to $j$, abbreviated $i\sim j$, if, and only if,  $\| X_i - X_j\| \le r$.  That is, the entries of the adjacency matrix are of the form $A_{i,j} = h_r(X_i, X_j)$, where
	\begin{equation}\label{eqn:h_r}
	h_r(x,y) := \one_{\| x-y \| \le r}\,.
	\end{equation}
	Suppose that $f$ is uniform on  compact  $S\subset \R^d$. In that case, it can be shown \cite{penrosebook} that there exists a constant $C_S$ such that, if  $r = r(n) \ge C_S \left(\log n / n\right)^{1/d}$,  then, as $n\to \infty$, with high probability, $G(n,r)$ is connected.
	Since the graph is connected we know that $\gamma_2 > 0$. However, even if $r$ is much larger than $(\log n / n)^{1/d}$, as long as $r=r(n)\to 0$, using Cheeger's inequality it can be shown
	\cite{silva_spectral_2011} that  a.s.~$\gamma_2 \to 0$. This, in particular, implies that such graphs are not expanders, cf. \cite{chung_spectral_1997,HLW06}. From at least one aspect, this behavior is somewhat counter-intuitive, as $r$ can be chosen large enough so that the graph is $k$-connected with $k = k(n)\to \infty$ \cite{penrosebook}.
	
	This behavior is very different to that occurring in some other models of random graphs. In particular, for the  the \erdren random graph $G(n,p)$, it was shown \cite{FK81,FO05,Vu07,coja-oghlan_laplacian_2007,EKYY12,EKYY13,LvHY18,BBGK19,hoffman_spectral_2019} that above the connectivity threshold ($p = \log n /n$) the entire spectrum of the graph (except for $\gamma_1$) is concentrated around $1$, and, in particular that $\gamma_2 \to 1$. 
	
	In this paper we want to study a regime where the spectral gap of $G(n,r)$ is bounded away from zero. Thus, we have to take $r$ to be uniformly bounded away from zero, and, in particular, will take $r$ to be constant, independent of $n$. 
	We take $S$ to be the cube $[-1,1]^d$, equipped with the  $L^{\infty}$ norm,
	\begin{equation}\label{eqn:norm}
	\| x \| := \| x \|_\infty = \max_{1\le k \le d} |x_i|,\qquad  x = (x_1,\ldots,x_d)\in \R^d.
	\end{equation}
	Note that $[-1,1]^d$ is in fact the unit {\it ball} in the $L^\infty$ norm, and so we will denote it by $B^d$.
	
	The motivation behind these seemingly arbitrary choices ($L^\infty$ and $B^d$) is twofold. The first  is that of mathematical tractability, since these specific choices allow us to compute concrete estimates for the limiting  spectrum of $G(n,r)$ {which, for geometric reasons, would be much harder to compute with, for example, an Euclidean ball equipped with the $L^2$ metric.} The second reason for this choice arises from one of the key motivations for this work as a whole, which we now describe.\\
	
	\paragraph{\bf Homological connectivity in random Vietoris-Rips complexes.} A {\it simplicial complex} is a generalization of a graph, where in addition to vertices and edges,  it is possible to  include triangles, tetrahedra, and higher dimensional simplexes (finite subsets of vertices). Given a graph $G$, its corresponding {\it flag} (or {\it clique}) complex is constructed by adding a $k$-dimensional simplex (subsets of vertices of size $k+1$) for every $(k+1)$-clique in the graph. When $G = G(n,r)$ this complex is known as the random {\it Vietoris-Rips} (VR) complex. In  \cite{kahle_random_2011}, the homology groups (algebraic-topological structures describing cycles in various dimensions) of random VR-complexes were studied. 
	
	 One of the main open questions in this area is about the {\it homological connectivity} of these complexes. {In particular, one is interested in `phase transitions' (as $n$ and $r_n$ change) leading to geometric complexes for which one of these  homology groups} suddenly becomes trivial (in a suitable sense). This is  a higher-dimensional analogue of the traditional graph-connectivity property. This phenomenon was studied recently \cite{bobrowski_homological_2019} for a different type of a geometric complex,  known as the {\it random \v{C}ech complex}. The proof there, however, heavily relies on Morse theory, which is not applicable to the VR case. Therefore, a different approach is required.
	
	In  \cite{kahle_sharp_2014} Kahle studied homological connectivity in random flag complexes generated by the \erdren graph $G(n,p)$. Kahle's proof uses the so-called `Garland's method' \cite{garland_p-adic_1973}, that translates questions about the homology of a simplicial complex into questions about the graph Laplacian of its links. Combining Garland's method with  concentration results for the spectrum of $G(n,p)$ \cite{hoffman_spectral_2019}, leads to the proof of a phase-transition for homological connectivity. Garland's method was also used in the study of homological connectivity of other models of random simplicial complexes such as the high-dimensional \erdren model (the Linial-Mehuslam model), cf. \cite{gundert_eigenvalues_2016,ron-antti}.
	
	In the random VR complex, using scaling invariance, the relevant links can be shown to form random geometric graphs in the intersection of a finite number of unit balls. 
	Consequently,  {we believe that}  the analysis we provide here  for $S=B^d$ could be used to prove homological connectivity for the VR complex (in the $L^\infty$ norm). This  remains for future work.  	
	
	\section{Main results}
	
	Throughout the paper, $d \geq 1$ will denote dimension. Let $\cX_n = \{X_1,X_2,\ldots , X_n\}$ be i.i.d.~uniformly distributed random variables in  $[-1,1]^d$, and let $G(n,r)$ be the random geometric graph generated by $\cX_n$, using the $L^\infty$ norm \eqref{eqn:norm}, as described above. We will focus on $G_{n,r}:= G(n,r)$, for a fixed $r>0$ independent of $n$. 
	Define
	\begin{equation}\label{eqn:h_func}
	 h_{r}(x,y)=\one_{\| x-y \| \le r}\,,
	\end{equation}
	 let $A_{n,r}$ be the adjacency matrix of $G_{n,r}$, i.e.\ $(A_{n,r})_{i,j} = h_r(X_i,X_j)$, and let $\cL_{n,r}$ be the corresponding symmetrically normalized graph Laplacian \eqref{eqn:lap}. Finally, let 
	\begin{equation}\label{eqn:W_n}
	W_{n,r} = I_n - \cL_{n,r} = D_{n,r}^{-\smallhalf} A_{n,r} D_{n,r}^{-\smallhalf},
	\end{equation}
	where $I_n$ denotes the $n\times n$ identity matrix, and $D_{n,r}$ is the diagonal matrix of vertex degrees (i.e.~$D_{n,r} = \mathrm{diag}(d_1,\ldots, d_n)$ where $d_i = \sum_j (A_{n,r})_{i,j}$). 
	
	Let $\lambda_1^{(n)} \ge \lambda_2^{(n)}  \ge \cdots \ge \lambda_n^{(n)} $  be  the (ordered) eigenvalues of $W_{n,r}$, and define the empirical eigenvalue measure 
	\[
	\mu_n(\cdot) = \sum_{i=1}^n \delta_{\lambda_i^{(n)}}(\cdot),
	\]
	where $\delta_x$ denotes the Dirac delta measure on $\R$. Observe that,  for all $r>0$, $\lambda_1^{(n)}=1$, with corresponding eigenvector $(\sqrt{d_{1}},\ldots,\sqrt{d_{n}})$. 
	
\begin{remark} 
	When $2\le r<\infty$ {$G(n,r)$ is fully connected, and so} all  the entries of $A_{n,r}$ are equal to $1$. In this case, $\lambda_i^{(n)} = 0$ for all $i>1$,  so that $\mu_n=(n-1)\delta_0+\delta_1$, {and there is nothing interesting to study. Consequently, we will always assume that}  $0< r<2$. 
\end{remark}

{With basic notation out of the way, we can now summarise our main results, which  provide detailed information about the structure of the spectrum of $W_{n,r}$ (and consequently $\cL_{n,r}$), as well as some its harmonics, for large $n$.}

When $r=1$, we show that in  the limit (as $n\to\infty$) the spectrum of $W_{n,r}$ contains the values $1/2^k$ with multiplicity $\binom{d}{k}$ for all $0\le k \le d$. In addition, we show that the remaining $(n-2^d)$ eigenvalues are concentrated in the interval $(-0.3,0.3)$. 

When $r\in (1,2)$, we show that the entire spectrum (except for $\lambda_1^{(n)}$) is  contained in $(-1/2,1/2)$. 

Finally, when $r\in (0,1)$,  we show that the limit of $\lambda_2^{(n)}$ is larger than $1/2$. 

One consequence of these results is that the spectral gap of $\cL_{n,r}$ either converges to $1/2$ ($r=1)$, is strictly larger than $1/2$ ($r\in (1,2)$), or strictly smaller than $1/2$ ($r\in (0,1)$).
	
Here are the formal statements. The first result provides estimates for the case $r=1$.
	
	\begin{thm}\label{thm:limiteigenvalue}
		For $r=1$, the following holds almost surely.
		\begin{enumerate}	
			\item 
			For any $\delta > 0$, and $0\le k \le d$, define the open interval $I_{k,\delta} = (2^{-k}-\delta, 2^{-k}+\delta)$. Then, 
			\[
			\lim_{n\to \infty}\mu_n(I_{k,\delta}) \ge\binom{d}{k}.
			\]
\item Let $I \subset \mathbb{R}\backslash[-0.3,0.3]$, then
			\[
			\lim_{n\to \infty}\mu_n(I) = \delta_1(I) + d \delta_{1/2}(I).
			\]
		\end{enumerate}

	\end{thm}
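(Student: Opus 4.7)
The plan is to analyze $W_{n,r}$ via its continuum limit and to exploit the product structure that the $L^\infty$ geometry forces when $r=1$. The matrix $W_{n,r}$ is the empirical approximation of the integral operator $T$ on $L^2(B^d,f)$ with kernel
\[
K(x,y)=\frac{h_r(x,y)}{\sqrt{F(x)F(y)}},\qquad F(x)=\int h_r(x,y)f(y)\,dy,
\]
where $f\equiv 2^{-d}$. The substitution $\psi=\sqrt{F}\,\phi$ conjugates $T$ to the averaging operator
\[
(A\phi)(x)=\frac{1}{\vol\bigl(B^d\cap(x+B^d)\bigr)}\int_{B^d\cap(x+B^d)}\phi(y)\,dy
\]
on $L^2(B^d,F\,dx)$. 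For $r=1$ both $F(x)=\prod_{k=1}^d F_1(x_k)$ with $F_1(u)=(2-|u|)/2$ and the averaging neighborhood factor coordinate-wise, so $A=A_1^{\otimes d}$ where $A_1$ is the one-dimensional averaging operator on $L^2([-1,1],(2-|u|)\,du)$.

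A direct one-dimensional computation shows $A_1(u)=u/2$, so $A_1$ has the eigenpair $(1/2,u)$ in addition to the trivial $(1,1)$. Tensoring, for each $S\subseteq[d]$ the function $\phi_S(x)=\prod_{k\in S}x_k$ is an eigenfunction of $A$ with eigenvalue $2^{-|S|}$; the $\binom{d}{k}$ such functions with $|S|=k$ are orthogonal in $L^2(B^d,F\,dx)$ by the parity symmetry of each coordinate. To prove part~(1) I would transport these to the approximate discrete eigenvectors
\[
(v_S^{(n)})_i=\sqrt{d_i}\,\prod_{k\in S}X_i^{(k)},
\]
and compute $(W_{n,r}v_S^{(n)})_i=d_i^{-1/2}\sum_{j\sim i}\prod_{k\in S}X_j^{(k)}$. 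A standard LLN argument for these sums, combined with the concentration of $d_i/n$ around $F(X_i)$, gives $\|W_{n,r}v_S^{(n)}-2^{-|S|}v_S^{(n)}\|=o(\|v_S^{(n)}\|)$ almost surely; asymptotic orthogonality of $\{v_S^{(n)}\}_{|S|=k}$ again follows by parity. The Courant--Fischer min-max principle then delivers at least $\binom{d}{k}$ eigenvalues of $W_{n,r}$ in the interval $I_{k,\delta}$ for all large $n$, establishing part~(1).

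For part~(2) the complementary upper bound is required: no remaining eigenvalue has magnitude exceeding $0.3$. By the tensor product structure, the full spectrum of $T$ is $\bigl\{\prod_{k}\mu_{i_k}\bigr\}$, with $\mu_0=1,\mu_1=1/2,\mu_2,\mu_3,\ldots$ the eigenvalues of $A_1$. Products using only $\mu_0$ and $\mu_1$ take the values $\{2^{-k}\}_{k=0}^d$, of which only $1$ and $1/2$ exceed $0.3$; any product involving some $\mu_j$ with $j\ge 2$ is bounded in absolute value by $\max_{j\ge 2}|\mu_j|$. Thus part~(2) reduces to proving $|\mu_j|\le 0.3$ for all $j\ge 2$, after which a standard spectral-convergence theorem for empirical kernel matrices transfers the bound to $W_{n,r}$ in the $n\to\infty$ limit.

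The main obstacle is this quantitative bound on the higher eigenvalues of $A_1$. Working in the self-adjoint realisation on $L^2((2-|u|)\,du)$, I would split into even and odd subspaces (both invariant under $A_1$): the odd part contains $\mu_1=1/2$ and the next eigenvalue is the supremum of the Rayleigh quotient on $u^\perp$; the even part reduces to estimating the Rayleigh quotient over functions orthogonal to the constant, with care needed for negative eigenvalues since $A_1$ is not positive. A naive Hilbert--Schmidt bound $\sum_{j\ge 2}\mu_j^2=\|A_1\|_{\mathrm{HS}}^2-5/4$ turns out to be too weak, since the kernel integral $\int\!\int\one_{|u-v|\le 1}/((2-|u|)(2-|v|))\,du\,dv$ noticeably exceeds $5/4$. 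A sharper argument would either use a basis adapted to the non-smoothness of $A_1$ at $u=0$---applying $A_1$ to any polynomial introduces $|u|$-type terms---or combine an explicit discretization of $A_1$ with an a~priori truncation estimate for the tail of its singular values.
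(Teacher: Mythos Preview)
Your overall architecture---reducing to the one-dimensional operator $A_1$ via the tensor product, identifying the eigenpair $(1/2,u)$, and then bounding the tail $\{|\mu_j|:j\ge 2\}$ to handle part~(2)---is exactly the paper's approach. The paper packages the passage from $W_{n,r}$ to the continuum operator differently, constructing an explicit step-function kernel $K_{n,r}$ with $\spec(\mathcal K_{n,r})=\spec(W_{n,r})$ and then invoking Szegedy's cut-norm spectral convergence rather than your test-vector Courant--Fischer argument; your route for part~(1) is more direct, though the cut-norm machinery has the advantage of delivering both the lower and upper bounds at once.

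The genuine error is in your final paragraph: you abandoned the correct argument. The Hilbert--Schmidt bound is \emph{not} too weak. Evaluating your own integral precisely,
\[
\int_{-1}^{1}\!\int_{-1}^{1}\frac{\one_{|u-v|\le 1}}{(2-|u|)(2-|v|)}\,du\,dv \;=\; 4(\log 2)^2 \;-\; 2\int_0^1\frac{\log(1+x)}{2-x}\,dx\;\approx\;1.333,
\]
so that $\sum_{j\ge 2}\mu_j^2\approx 1.333-1.25=0.083<0.09=(0.3)^2$, and hence $|\mu_j|<0.3$ for every $j\ge 2$. This is precisely the paper's argument for the one-dimensional tail bound. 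Your phrase ``noticeably exceeds $5/4$'' turns out to mean ``by less than $0.09$,'' which is exactly the margin required. The elaborate programme you sketch---parity sectors, non-smooth bases, discretisation with tail truncation---is unnecessary.
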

	
	These results imply that, {for large enough (random) $n$, } the normalized Laplacian $\cL_{n,r}$ has at least $\binom{d}{k}$ eigenvalues around $1-2^{-k}$. Similarly, in the interval $(0,0.7)$ the only eigenvalues of  $\cL_{n,r}$ are $0$ and  $1/2$, and there are no eigenvalues in the interval $(1.3,2]$. 
%

The next two results  provide estimates for the cases where $r\neq 1$.
	\begin{thm}\label{thm:1to2}
Let $1<r<2$, then almost surely there exists $N>0$ such that  all $n\ge N$, 
	\[	|\lambda_k^{(n)}|< \smallhalf, 
	 \qquad \mbox{ for $k=2,\ldots, n$}.
	\] 
\end{thm}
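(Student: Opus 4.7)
The plan is to translate the finite random problem into a deterministic spectral problem for a limiting integral operator, and then to exploit the product structure of the $L^\infty$ ball to reduce the question to one dimension.

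First, set $\bar D(x):=2^{-d}\int_{[-1,1]^d} h_r(x,y)\,dy$, so that $\mathbb{E}[d_i/n\mid X_i]=\bar D(X_i)$ and $\bar D(x)\ge (r/2)^d > 0$ uniformly on $[-1,1]^d$. A Bernstein inequality combined with a union bound gives $\max_{i}|d_i/n - \bar D(X_i)| = O(\sqrt{(\log n)/n})$ almost surely, from which Weyl's inequality yields $\|W_{n,r}-\widetilde W_{n,r}\|_{\mathrm{op}}\to 0$ a.s., where $\widetilde W_{n,r}$ is the empirical kernel matrix with entries $n^{-1}k_r(X_i,X_j)$ and $k_r(x,y):=h_r(x,y)/\sqrt{\bar D(x)\bar D(y)}$. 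Standard spectral convergence results for such kernel matrices (via the Hilbert--Schmidt norm of $k_r$) guarantee that each ordered eigenvalue of $\widetilde W_{n,r}$ converges almost surely to the corresponding eigenvalue of the compact self-adjoint integral operator $\mathcal T_r$ on $L^2([-1,1]^d, 2^{-d}dx)$ with kernel $k_r$. The top eigenvalue of $\mathcal T_r$ equals $1$, with eigenfunction $\sqrt{\bar D}$, so the theorem reduces to showing every non-top eigenvalue of $\mathcal T_r$ lies in $(-\tfrac12,\tfrac12)$.

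Because the $L^\infty$ unit ball is a Cartesian product, $h_r(x,y)=\prod_{k=1}^d\phi_r(x_k,y_k)$ with $\phi_r(a,b):=\mathbf 1_{|a-b|\le r}$, and correspondingly $\bar D(x)=\prod_k D_r(x_k)$ where $D_r(a):=\tfrac12\int_{-1}^1\phi_r(a,b)\,db$. Hence $\mathcal T_r=T_r^{\otimes d}$, where $T_r$ is the one-dimensional operator on $L^2([-1,1], dx/2)$ with kernel $\phi_r(a,b)/\sqrt{D_r(a)D_r(b)}$, and $\mathrm{spec}(\mathcal T_r)$ is the set of all $d$-fold products of elements of $\mathrm{spec}(T_r)$. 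Since every eigenvalue of $T_r$ lies in $[-1,1]$ with the top equal to $1$, it suffices to show every non-top eigenvalue of $T_r$ itself lies in $(-\tfrac12,\tfrac12)$: any product of such a value with factors in $[-1,1]$ still lies in $(-\tfrac12,\tfrac12)$.

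The one-dimensional spectral bound is the heart of the proof. The key structural feature, available exactly when $r>1$, is that the central interval $M_r:=[-(r-1),r-1]$ has positive measure, with $D_r\equiv 1$ on $M_r$ and $\phi_r(a,b)=1$ for every $a\in M_r$ and $b\in[-1,1]$. Substituting $a\in M_r$ into $T_r g=\lambda g$ forces every eigenfunction with $\lambda\ne 0$ to be constant on $M_r$, so the remaining problem reduces to an integral equation on the two wings $L_r=[-1,-(r-1))$ and $R_r=((r-1),1]$ coupled with a single scalar (the value of $g|_{M_r}$). Differentiating yields an ODE for $h(a):=g(a)/\sqrt{D_r(a)}$ on the wings with a boundary condition at $\pm(r-1)$ determined by that scalar, and a Rayleigh-quotient estimate on this reduced family gives $|\lambda|<\tfrac12$ for every $r\in(1,2)$. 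The main technical obstacle is the sharpness of this bound: the inequality degenerates to equality at $r=1$, where, by Theorem~\ref{thm:limiteigenvalue}, $\tfrac12$ is genuinely an eigenvalue of $T_1$, so a uniform rank-one perturbation estimate of the form $\|T_r-P_{\sqrt{D_r}}\|_{\mathrm{op}}<\tfrac12$ is unavailable, and one must carefully combine the wing ODE with the ``missing-mass'' estimate $\iint_{|a-b|>r}\,da\,db = (2-r)^2$ to secure strict inequality throughout the open interval $(1,2)$.
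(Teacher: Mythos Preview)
Your reduction to the limiting integral operator and the tensor-product decomposition are correct and match the paper's strategy. Your route to the limiting operator differs from the paper's: you use a Bernstein/Weyl approximation of $W_{n,r}$ by an empirical kernel matrix together with Koltchinskii--Gin\'e--type spectral convergence, whereas the paper constructs auxiliary step-function kernels via a multi-coordinate order-statistics sorting and proves convergence in the cut norm (Lemma~\ref{lem:szegedy}, Lemmas~\ref{lem:spec_d} and \ref{lem:converge_d}). Both routes are viable; yours is more off-the-shelf but you should be explicit that you need control of \emph{all} eigenvalues of $W_{n,r}$ outside $(-\tfrac12,\tfrac12)$, not just a fixed finite number, which is why the paper phrases convergence as a counting statement.

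The real gap is in the one-dimensional bound, which you correctly identify as the heart of the matter but do not actually establish. Your ``constant on $M_r$'' observation is right and useful, but differentiating the eigenvalue equation on the wings does not give an ODE: it gives a system coupling $h(a)$ with $h(a-r)$ (equivalently, $h$ on $R_r$ with $h$ on $L_r$), and you never specify the Rayleigh-quotient argument that would turn this into $|\lambda|<\tfrac12$. Your closing sentence acknowledges this is unresolved. The paper handles it quite differently and concretely (Lemmas~\ref{lem:odd} and \ref{lem:even}): first it shows (Lemma~\ref{lem:eigenfunction}) that the eigenspace is spanned by odd and even functions. For \emph{odd} eigenfunctions, the quadratic form reduces to an integral over $\{(x,y)\in[0,1]^2: y\ge (r-x)\wedge 1\}$, which for $r>1$ is strictly contained in the $r=1$ region $\{y\ge 1-x\}$, and in addition $H_r>H_1$ pointwise (except at $0$); this gives a direct domination $\langle \mathcal K_r^1 f,f\rangle<\langle \mathcal K_1^1 \hat f,\hat f\rangle\le\tfrac12$ by the known $r=1$ spectrum. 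For \emph{even} eigenfunctions orthogonal to $\sqrt{H_r}$, the paper passes to $g=f/\sqrt{H_r}$ and evaluates the eigenvalue equation at a point $x_0$ where $|g|$ is maximal, using the orthogonality $\int g H_r=0$ with a free multiplier to cancel the contribution from $M_r$; this yields explicit bounds $|\lambda|\le (2-r)/2$ or $|\lambda|\le (2-r)^2/4$ depending on whether $x_0$ lies in the wings or in $M_r$. This odd/even split with comparison-to-$r=1$ plus an $L^\infty$ maximum argument is what replaces your unspecified ``wing ODE $+$ missing-mass'' estimate.
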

\begin{thm}\label{thm:0to1}
Let $0<r<1$. Then, almost surely, there exists $N>0$ such that  for  all $n\ge N$, 
	\[		\lambda_2^{(n)}>   \smallhalf. 
	\]
\end{thm}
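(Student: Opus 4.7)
The plan is to invoke the variational characterization
\[
\lambda_2^{(n)} \;\ge\; \frac{v^{\top} W_{n,r} v}{v^{\top} v}, \qquad v\ne 0,\; v\perp v_1,
\]
where $v_1 = D_{n,r}^{1/2}\mathbf{1}$ is the top eigenvector (associated to $\lambda_1^{(n)}=1$), and to exhibit an explicit test vector whose Rayleigh quotient converges almost surely to a constant strictly exceeding $\tfrac12$. Motivated by the $\lambda=\tfrac12$ eigenvectors of Theorem \ref{thm:limiteigenvalue}, the natural candidate is coordinate-valued: set $v_i := X_i^{(1)}\sqrt{d_i}$, where $X_i^{(1)}$ denotes the first coordinate of $X_i$.

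Substituting into $W_{n,r} = D_{n,r}^{-1/2} A_{n,r} D_{n,r}^{-1/2}$ collapses the Rayleigh quotient to the bounded $U$-statistic ratio
\[
R_n \;=\; \frac{\sum_{i\ne j} X_i^{(1)} X_j^{(1)} h_r(X_i,X_j)}{\sum_{i\ne j} (X_i^{(1)})^2 h_r(X_i,X_j)},
\]
to which the strong law of large numbers applies, yielding $R_n \to J_2/J_3$ almost surely, with
\[
J_2 := \int_{B^d\times B^d} x_1 y_1\, h_r(x,y)\,dx\,dy,\qquad J_3 := \int_{B^d\times B^d} x_1^2\, h_r(x,y)\,dx\,dy.
\]
Since $h_r(x,y) = \prod_{k=1}^d \one_{|x_k-y_k|\le r}$ factorizes in the $L^\infty$ metric, the remaining $d-1$ coordinates contribute a common factor and $J_2/J_3$ coincides with its one-dimensional analogue. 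An elementary computation via the substitution $s=x+y$, $t=x-y$ yields the one-dimensional numerator $\tfrac{4r}{3} - r^2 + \tfrac{r^4}{12}$ and denominator $\tfrac{4r}{3} - r^2 + \tfrac{2r^3}{3} - \tfrac{r^4}{6}$, so $J_2/J_3 > \tfrac12$ reduces to positivity of $r(1-r)(4+r-r^2)$, which manifestly holds on $(0,1)$ and vanishes precisely at the threshold $r=1$.

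The variational principle requires a test vector orthogonal to $v_1$, so one replaces $v$ by $\tilde v := v - c_n v_1$ with $c_n = \langle v,v_1\rangle/\|v_1\|^2$. The correlation $\langle v,v_1\rangle = \sum_{i\ne j} X_i^{(1)} h_r(X_i,X_j)$ has mean zero under the reflection $x_1\mapsto -x_1$ of $B^d$, so a Hoeffding-decomposition second-moment bound gives $|c_n| = O(n^{-1/2})$ almost surely, while $\|v_1\|^2$, $\|v\|^2$, and $v^{\top}W_{n,r}v$ are all of order $n^2$. Because $W_{n,r} v_1 = v_1$, passing from $v$ to $\tilde v$ perturbs the Rayleigh quotient by only $O(n^{-1})$, so its almost-sure limit remains $J_2/J_3 > \tfrac12$, and the theorem follows. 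The crux of the argument is the algebraic identity $4r-3r^2-2r^3+r^4 = r(1-r)(4+r-r^2)$, which pinpoints the transition at $r=1$; the main obstacle is guessing the correct coordinate-valued test function from the geometric structure of the $r=1$ eigenvectors, after which the probabilistic ingredients (strong law for $U$-statistics, projection correction) are routine.
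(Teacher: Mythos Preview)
Your proof is correct and takes a genuinely different, more direct route than the paper. The paper proceeds by constructing an auxiliary kernel operator $\K_{n,r}$ on $L^2(B^d)$ with $\spec(\K_{n,r})=\spec(W_{n,r})$, proving cut-norm convergence $K_{n,r}\to K_r^d$, and then invoking Szegedy's spectral-convergence lemma together with the computation (Lemma~5.11) that the test function $f(x)=x_1\sqrt{H_r(x)}$ gives the limiting operator $\K_r^d$ a Rayleigh quotient exceeding $\tfrac12$. You bypass the entire kernel/cut-norm machinery and apply the variational principle directly to the finite matrix $W_{n,r}$, with $v_i=X_i^{(1)}\sqrt{d_i}$ serving as the discrete analogue of the paper's test function; the strong law for $U$-statistics then replaces Szegedy's lemma. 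The integral computation is the same in both arguments, and your factorization $2J_2-J_3=\tfrac{1}{3}r(1-r)(4+r-r^2)$ checks out. Your approach is more elementary and works for every $n$ without the $n=m^d$ interpolation step; the price is that it yields only the lower bound on $\lambda_2^{(n)}$ and says nothing about its limit or the rest of the spectrum, whereas the paper's operator framework handles Theorems~2.1--2.3 uniformly.

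One small point of precision: the claim ``a Hoeffding-decomposition second-moment bound gives $|c_n|=O(n^{-1/2})$ almost surely'' is slightly overstated---a second-moment bound alone gives convergence only in probability. What you actually need, and what is immediate, is the weaker statement $c_n\to 0$ almost surely: since $\langle v,v_1\rangle=\sum_{i\ne j}X_i^{(1)}h_r(X_i,X_j)$ is (after symmetrization) $\binom{n}{2}$ times a bounded mean-zero $U$-statistic, the $U$-statistic SLLN gives $n^{-2}\langle v,v_1\rangle\to 0$ a.s., while $n^{-2}\|v_1\|^2\to \E[H_r(X_1)]>0$. This is enough to make the correction term $c_n^2\|v_1\|^2/\|v\|^2\to 0$ and complete the argument.
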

	
Theorems \ref{thm:limiteigenvalue}-\ref{thm:0to1} provide the following result about the spectral gap of the normalized Laplacian.

\begin{corollary}\label{cor}
Let $\cL_{n,r}$ be the normalized Laplacian of $G_{n,r}$, and recall that $\gamma_2^{(n)}$ is its spectral gap.
The following holds almost surely.
\begin{enumerate}
\item If $r=1$, then $\limninf \gamma_2^{(n)} = \smallhalf$.
\item If $r\in (1,2)$, then 
\begin{align}
	\frac{1}{2}< \liminf\gamma_2^{(n)}\le \limsup \gamma_2^{(n)}<1.
\end{align}.
\item If $r\in (0,1)$, then 
\begin{align}
0< \liminf\gamma_2^{(n)}\le \limsup \gamma_2^{(n)}<\frac{1}{2}.
\end{align}
\end{enumerate}
\end{corollary}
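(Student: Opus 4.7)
The plan is to translate each of Theorems \ref{thm:limiteigenvalue}--\ref{thm:0to1} into the stated bound on $\gamma_2^{(n)}$ using the identity $\gamma_2^{(n)} = 1 - \lambda_2^{(n)}$ (the $\gamma$'s are increasing and the $\lambda$'s decreasing). For case (1), $r=1$, it suffices to show $\lambda_2^{(n)} \to \frac{1}{2}$. Part (2) of Theorem \ref{thm:limiteigenvalue} guarantees that, almost surely for all $n$ sufficiently large, the spectrum of $W_{n,r}$ outside $[-0.3,0.3]$ consists of exactly $d+1$ eigenvalues: one converging to $1$, which is accounted for by the trivial $\lambda_1^{(n)}=1$, and $d$ converging to $\frac{1}{2}$ by Theorem \ref{thm:limiteigenvalue}(1) at $k=1$. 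The second-largest eigenvalue $\lambda_2^{(n)}$ must therefore be the largest of the latter cluster, which forces $\lambda_2^{(n)}\to \frac{1}{2}$ and hence $\gamma_2^{(n)}\to \frac{1}{2}$.

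For case (2), $r\in(1,2)$, Theorem \ref{thm:1to2} gives $\lambda_2^{(n)}<\frac{1}{2}$ eventually, so $\gamma_2^{(n)}>\frac{1}{2}$ eventually and $\liminf \gamma_2^{(n)} \ge \frac{1}{2}$. Upgrading to a strict inequality requires an $n$-uniform margin $\varepsilon(r)>0$ in the bound of Theorem \ref{thm:1to2}, which I expect to read off that theorem's proof. The complementary bound $\limsup \gamma_2^{(n)}<1$, equivalently $\liminf \lambda_2^{(n)}>0$, I would derive by noting that $r>1$ forces each vertex degree to be a positive fraction of $n$ uniformly, and then invoking a Cheeger-type inequality (or a direct min-max comparison with the normalized adjacency of the complete graph) to produce a uniform positive lower bound on $\lambda_2^{(n)}$. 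Case (3), $r\in(0,1)$, is symmetric: Theorem \ref{thm:0to1} gives $\limsup \gamma_2^{(n)} \le \frac{1}{2}$ immediately, with strictness to be obtained again from a margin visible in its proof, while $\liminf \gamma_2^{(n)}>0$ reduces to a quantitative connectivity statement. Tiling $[-1,1]^d$ by boxes of side comparable to $r$ and applying Chernoff bounds forces a positive fraction of the points into each box for all large $n$, which yields a Cheeger constant and hence a spectral gap bounded away from zero.

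The main obstacle will be promoting the weak inequalities $\liminf \gamma_2^{(n)} \ge \frac{1}{2}$ in case (2) and $\limsup \gamma_2^{(n)} \le \frac{1}{2}$ in case (3) to strict ones, that is, locating explicit $r$-dependent margins in the bounds of Theorems \ref{thm:1to2}--\ref{thm:0to1}. If such margins are not transparent from the proofs of those two theorems, a fallback is a continuity and monotonicity heuristic in $r$: the limiting $\gamma_2$ equals $\frac{1}{2}$ at $r=1$, tends to $1$ as $r\to 2$ (the complete-graph limit) and to $0$ as $r\to 0$ (disconnection), so on any open sub-interval of $(0,1)$ or $(1,2)$ the limit is forced to sit strictly in between.
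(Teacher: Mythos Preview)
Your treatment of case (1) is correct and matches the paper. For cases (2) and (3), however, you diverge from the paper's route and leave genuine gaps.

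The paper does not extract an explicit $n$-uniform margin from the proofs of Theorems \ref{thm:1to2}--\ref{thm:0to1}, nor does it invoke any Cheeger-type or tiling argument. Instead it proves one additional fact about the limiting operator: Lemma \ref{lem:lessthan1} shows that for every $r\in(0,2)$ the second eigenvalue of $\K_r^d$ satisfies $0<\lambda_2(\K_r^d)<1$, strictly on both sides. Combining this with Lemma \ref{lem:k_L} (for $r\in(1,2)$) or Lemma \ref{lem:r<1} (for $r\in(0,1)$) pins $\lambda_2(\K_r^d)$ strictly inside $(0,\tfrac12)$, respectively $(\tfrac12,1)$. The spectral-convergence package (Lemmas \ref{lem:spec_d}, \ref{lem:converge_d}, \ref{lem:szegedy}) then transfers these strict inequalities, with room to spare, to $\lambda_2^{(n)}$ for all large $n$. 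So both the strictness at $\tfrac12$ and the bounds away from $0$ and $1$ fall out of the same operator-limit framework already built for the theorems; no new combinatorial or geometric input is needed.

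Your Cheeger/tiling proposal for $\liminf\gamma_2^{(n)}>0$ in case (3) and the analogous bound in case (2) is plausible and could presumably be carried out, but it is a separate line of argument requiring its own isoperimetric estimates, and as written it is only a sketch. Your instinct that a margin should be ``visible in the proof'' of Theorems \ref{thm:1to2}--\ref{thm:0to1} is in fact correct---those proofs go through $\K_r^d$, where the inequalities are strict---but you have not named the mechanism (Lemma \ref{lem:szegedy} applied at a level $\lambda$ slightly away from $\lambda_2(\K_r^d)$) that converts a strict inequality for the limit into a uniform one for the sequence. Your continuity/monotonicity fallback is, as you acknowledge, not a proof.
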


	As alluded to in the Introduction, this behavior is very different to that in the case  in the \erdren random graph $G(n,p)$. For, $G(n,p)$ we know \cite{hoffman_spectral_2019} that when the expected vertex-degree is a little above $\log n$, the spectrum of $\cL_{n,r}$ is concentrated around $1$. In particular, the spectral gap converges to $1$ (in probability). In the setting of $G(n,r)$, the graph is considerably denser, as the expected degree is
	proportional to $ n$, yet the spectral gap is much lower, and there is an entire sequence of eigenvalues between $0$ and $1$.
	
	
	The remainder of the paper is dedicated to proving Theorems \ref{thm:limiteigenvalue}-\ref{thm:0to1}  and their corollary.
	
	\section{Spectral convergence}
	
	The proofs of Theorems \ref{thm:limiteigenvalue}-\ref{thm:0to1}  rely heavily  on a suitable  definition convergence for the eigenvalues of a matrix. For this we exploit results from \cite{szegedy11} on the convergence of self-adjoint operators. In this section we provide the essential background.
	
	Let $(V,\nu)$ be a probability space, and denote by
	\begin{align*}
	\mathcal H := L^2(V,\nu)=\l\{ f:V\to \R \; : \; \int_V |f(x)|^2d\nu(x)<\infty \r\},
	\end{align*}
	the Hilbert space with the inner product 
	$$
		\langle f,g\rangle=\int_V f(x)g(x)d\nu(x),
	$$
	and associated norm $\|f\|_2:=\sqrt{\langle f,f\rangle}$.
	Let $K:V\times V\to \R$ be  a  kernel function in $L^2(V\times V, \nu\times \nu)$, and let  $\mathcal K\; :\;	\mathcal H \to 	\mathcal H $ be the {\it Hilbert-Schmidt kernel operator} for the kernel $K$,  defined by
	\begin{align*}
	\mathcal K f(x)=\int_V K(x,y)f(y)d\nu(y), \mbox{ for $f\in 	\mathcal H$}.
	\end{align*}
Since $K\in L^2(V\times V,\nu\times \nu)$, the operator $\mathcal K$ is compact, and hence its spectrum is given by a sequence of eigenvalues converging to zero. Furthermore, if $K(x,y)={K(y,x)}$, then the operator $\mathcal K$ is self-adjoint, and hence all of its eigenvalues are real. Throughout the paper we will use  $\spec(\cdot)$ to refer to the set of eigenvalues of a matrix or an operator, where eigenvalues are repeated according to their multiplicity.

The {\it cut norm} of $K$ is defined by
	\begin{align*}
	\|K\|_{\square}=\sup_{S,T}\l|\iint_{S\times T} K(x,y)d\nu(x) d\nu(y)\r|,
	\end{align*} 
	where $S, T$ run through all pairs of measurable sets in $V$. Note that 
	\begin{align*}
	\|K\|_{\square}\le \|K\|_1:=\iint_{V\times V}|K(x,y)|d\nu(x)d\nu(y).
	\end{align*}

	The following result is an extension of Lemma 1.11 in \cite{szegedy11}, and will be used in the proof of Theorem \ref{thm:limiteigenvalue}.
	\begin{lemma}[Lemma 1.11 in \cite{szegedy11} -- extended]\label{lem:szegedy}		Let $\{\mathcal K_n\}$ be a sequence of self-adjoint Hilbert-Schmidt kernel operators in $\mathcal H$ with corresponding kernels $\{K_n\}$,  such that $\sup_n\|K_n\|_{\infty}\le C$, for some $C>0$. 
		Suppose that  $K_n\to K$ in the cut norm. Let $\mathcal K$ be the Hilbert-Schmidt  kernel operator for the  kernel $K$. Then, for every $\lambda>0$ such that $\pm \lambda \not \in \spec(\K)$,
		\begin{equation}\label{eqn:szegedy}
		\begin{split}
		\lim_{n\to \infty} |\spec(\K_n)\cap (\lambda,\infty)| &=|\spec(\K)\cap (\lambda,\infty)|, 
		\\\lim_{n\to \infty} |\spec(\K_n)\cap (-\infty,-\lambda)|&=|\spec(\K)\cap (-\infty,-\lambda)|,
		\end{split}
		\end{equation}
		where $|\cdot |$ denotes cardinality.
	\end{lemma}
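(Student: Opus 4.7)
The plan is to reduce the statement to Szegedy's original Lemma 1.11, which supplies the first identity in \eqref{eqn:szegedy}, namely the counting convergence for eigenvalues exceeding $\lambda$ whenever $\lambda > 0$ and $\lambda \notin \spec(\K)$. Our extension amounts to the parallel statement for eigenvalues below $-\lambda$, and the idea is to derive it from the original result by applying it to the negated sequence $\{-\K_n\}$.

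First I would check that the hypotheses of the lemma are preserved under negating the kernels. The operator $-\K_n$ is self-adjoint and Hilbert-Schmidt with kernel $-K_n$, the uniform supremum bound $\sup_n\|-K_n\|_\infty \le C$ is unchanged, and the cut norm is invariant under a global sign flip, so
\[
\|(-K_n) - (-K)\|_\square \;=\; \|K_n - K\|_\square \;\longrightarrow\; 0.
\]
Thus $\{-\K_n\}$ falls within the scope of the original Lemma 1.11 with limit operator $-\K$.

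Next I would apply the original lemma to this negated sequence. For any $\lambda > 0$ with $-\lambda \notin \spec(\K)$, equivalently $\lambda \notin \spec(-\K)$, the conclusion yields
\[
\lim_{n\to\infty}\bigl|\spec(-\K_n)\cap(\lambda,\infty)\bigr| \;=\; \bigl|\spec(-\K)\cap(\lambda,\infty)\bigr|.
\]
Using the elementary identity $\spec(-T) = -\spec(T)$ for self-adjoint $T$, both sides rewrite into the form appearing in the second line of \eqref{eqn:szegedy}: the left-hand side becomes $|\spec(\K_n)\cap(-\infty,-\lambda)|$ and the right-hand side becomes $|\spec(\K)\cap(-\infty,-\lambda)|$. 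Combining this with the original lemma's first identity, under the joint hypothesis $\pm\lambda \notin \spec(\K)$, delivers both halves of \eqref{eqn:szegedy}.

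The only delicate point is that Szegedy's original formulation must genuinely accept real-valued (possibly signed) kernels rather than only nonnegative graphon-type kernels, since otherwise the negation step would push us out of the hypothesis class. In the cited reference the setting is exactly that of real-valued bounded measurable kernels, so the reduction is valid and no further analytic work is required for the extension; all the substantive approximation theory (truncating to a finite-rank kernel via a weak regularity argument, then comparing spectra via min-max) is inherited wholesale from the original proof.
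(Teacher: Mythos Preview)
Your reduction by negation is correct and considerably more economical than what the paper does. The paper does not invoke Szegedy's Lemma 1.11 as a black box; instead, in Appendix~B it reproduces the entire argument from scratch (spectral decomposition of each $K_n$, diagonal extraction of subsequences along which eigenvalues and eigenfunctions converge, Szegedy's Lemma 1.10 to upgrade weak to $L^2$ convergence, reconstruction of the limit kernel, and a final counting step). In that counting step the paper works with $|\lambda_j|$ throughout, so the two inequalities $\lambda_j>\lambda$ and $\lambda_j<-\lambda$ fall out simultaneously; this is why the hypothesis is naturally $\pm\lambda\notin\spec(\K)$ rather than just $\lambda\notin\spec(\K)$. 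Your route trades this self-contained reproof for a one-line symmetry observation, which is perfectly valid provided Szegedy's original statement is available for signed bounded kernels---a point you rightly flag. The paper's approach buys self-containment; yours buys brevity.
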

	
	The proof for Lemma \ref{lem:szegedy} is a modification of the proof in \cite{szegedy11}. In order to keep the paper self contained, we provide a proof in Appendix \ref{sec:appendix_spec}.
	
	\section{Outline for the proofs of Theorems \ref{thm:limiteigenvalue}, \ref{thm:1to2}, \ref{thm:0to1}}
	
	Before starting the proofs in detail, we will outline the main steps required for proving Theorems \ref{thm:limiteigenvalue}, \ref{thm:1to2} and  \ref{thm:0to1}. Lemma \ref{lem:szegedy} plays a key role in our proofs, where in our settings $V=B^d:=[-1,1]^d$ and $\nu_d$ is the uniform probability measure on $B^d$. The proof will then consist of three main steps.
	
	\vspace{.2cm}
	\noindent{\bf Step I :} We construct a special sequence of kernels $K_{n,r} : B^d\times B^d \to \R$ and show that the corresponding Hilbert-Schmidt operators satisfy $\spec(\mathcal K_{n,r})=\spec(W_{n,r})$.  This is achieved by using Lemma \ref{lem:equaleigenvalues} ($d=1$), Lemma \ref{lem:eigenvaluessame2} ($d= 2$), and Lemma \ref{lem:spec_d} ($d\ge 3$).
	
	\vspace{.2cm}
	\noindent{\bf Step II :} 
	Recall the definition of $h_r$ from  \eqref{eqn:h_func}, and define $K_r^d(x,y):B^d\times B^d\to \R$ by 
	\begin{align}\label{eqn:k_inf}
	 K_r^d(x,y):=\frac{h_r(x,y)}{\sqrt{H_r(x) H_r(y)}},
	\end{align}
	where $ 0<r<2$ and 
	\begin{equation}\label{eqn:H}
	\begin{aligned}
	 H_r(x):= \int_{B^d} h_r(x,u)d\nu_d(u) .
	 \end{aligned}
	\end{equation}
	We show that $\iint |K_r^d(x,y)|^2 d\nu_d(x)d\nu_d(y)<\infty$, {from which it follows that }
	the corresponding integral operator $\K_r^d$ is a Hilbert-Schmidt kernel operator. Furthermore, we show that 
	$\K^d_1$  has at least $\binom{d}{k}$ eigenvalues at $1/{2^k}$, for all $0\le k \le d$, and that the remaining eigenvalues lie in $(-0.3,0.3)$. This is the content of  Lemma \ref{lem:alleigenvalues}.  We also show that all the eigenvalues of $\K_r^d$ (except for $1$) lie in $(-0.5,0.5)$,  for all $1<r<2$. This is the content of Lemma \ref{lem:k_L}. On the other hand we show that the second largest eigenvalue of $\K_r^d$ is larger than $0.5$ for $0<r<1$. See Lemma \ref{lem:r<1}.

	\vspace{.2cm}
	\noindent{\bf Step III :} We show that $K_{n,r} \to K_r^d$ in the cut-norm, almost surely. This is carried out in Lemma \ref{lem:kernelconvergence} ($d=1$), Lemma \ref{lem:convergencekernel2} ($d= 2$), and Lemma \ref{lem:converge_d} ($d\ge 3$).

	\vspace{.2cm}
	Combining these three steps and Lemma \ref{lem:szegedy} gives us Theorems \ref{thm:limiteigenvalue}, \ref{thm:1to2} and \ref{thm:0to1}.
	
	The rest of the paper is organized as follows. In Section \ref{sec:eigenvalue} we calculate the eigenvalues of the limiting operator $\K_r^d$, for $0<r<2$. In Section \ref{sec:kernel_d1} we construct the kernels $K_{n,r}$ for $d=1$, and show their convergence. In Section \ref{sec:kernel_d2} we construct the kernels for $d=2$, and show their convergence. In Section \ref{sec:generald} we provide the details needed to generalize the two-dimensional case to arbitrary $d\geq 3$. Finally, the proofs for Theorems \ref{thm:limiteigenvalue}, \ref{thm:1to2}, \ref{thm:0to1} and Corollary \ref{cor} are given in Section \ref{sec:thm1}.
	
	To conclude this section, we present explicit formulae for $H_r(x)$ that will be useful for us later.
	For $0<r\leq 1$, we have that 
		\begin{equation}\label{eqn:H_r_01}
			H_r(x) = \begin{cases}
				\frac{1+r-|x|}{2} & 1-r \le |x| \le 1,\\
				r & |x|\leq 1-r.
			\end{cases}
		\end{equation}
For $1\leq r< 2$, we have
		\begin{equation}\label{eqn:H_r_12}
			H_r(x) = \begin{cases}
				\frac{1+r-|x|}{2} & r-1\leq |x|\leq 1, \\
				1 & |x|\leq r-1.
			\end{cases}
		\end{equation}
In particular, when $r=1$, we have $H_r(x) = 1-|x|/2$.

	\section{The spectrum of the limiting operators}\label{sec:eigenvalue}
	
	Recall the definition of the integral operator $\K_r^d$  with kernel $K_r^d$ given in \eqref{eqn:k_inf}. In this section we estimate the eigenvalues of this operator for arbitrary $d\geq 1$. First, we show that the operator is indeed  self-adjoint and Hilbert-Schmidt.
	\begin{lemma}	For every $d\geq 1$ the kernel $K_r^d$ satisfies $K_r^d(x,y)={K_r^d(y,x)}$ for all $x,y\in B^d$ and
	\[
		\iint_{B^d\times B^d}|K_r^d(x,y)|^2d\nu_d(x)d\nu_d(y)<\infty.
	\]
	That is, $\K_r^d$ is a self-adoint compact Hilbert-Schmidt operator. 
	\end{lemma}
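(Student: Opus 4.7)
The plan is to verify the two claims separately: symmetry is essentially immediate, and the Hilbert--Schmidt property reduces to a uniform lower bound on $H_r$. The crucial geometric input is that the $L^\infty$ ball structure makes everything factorize, so that the explicit one-dimensional formulae \eqref{eqn:H_r_01}--\eqref{eqn:H_r_12} can be lifted to arbitrary $d$.

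First, symmetry of $K_r^d$ follows because $\|x-y\|_\infty = \|y-x\|_\infty$, whence $h_r(x,y) = h_r(y,x)$, and the factor $\sqrt{H_r(x) H_r(y)}$ in the denominator is symmetric by construction. So $K_r^d(x,y) = K_r^d(y,x)$ for every $x,y \in B^d$, and the associated integral operator $\K_r^d$ is therefore self-adjoint as soon as it is bounded.

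For the Hilbert--Schmidt bound, I would use the fact that, because the $L^\infty$ norm is used, the indicator factorises:
\[
h_r(x,y) \;=\; \prod_{i=1}^{d} \one_{|x_i - y_i| \le r},
\]
and since $\nu_d$ is the product of the uniform probability measure on $[-1,1]$, Fubini gives $H_r(x) = \prod_{i=1}^{d} H_r^{(1)}(x_i)$, where $H_r^{(1)}$ is the one-dimensional analogue. Inspection of \eqref{eqn:H_r_01}--\eqref{eqn:H_r_12} shows that, for every $0 < r < 2$, $H_r^{(1)}$ attains its minimum at $|t| = 1$, with value $r/2$. Consequently
\[
H_r(x) \;\ge\; \left(\tfrac{r}{2}\right)^{d} \qquad \text{for every } x \in B^d,
\]
a uniform positive lower bound.

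With this in hand, using $h_r^2 = h_r$,
\[
\iint_{B^d \times B^d} |K_r^d(x,y)|^2 \, d\nu_d(x)\, d\nu_d(y)
\;=\; \iint_{B^d \times B^d} \frac{h_r(x,y)}{H_r(x)\, H_r(y)}\, d\nu_d(x)\, d\nu_d(y)
\;\le\; \left(\tfrac{2}{r}\right)^{2d} \iint h_r \, d\nu_d\, d\nu_d
\;\le\; \left(\tfrac{2}{r}\right)^{2d} < \infty,
\]
so $K_r^d \in L^2(B^d \times B^d, \nu_d \times \nu_d)$. Standard theory then gives that $\K_r^d$ is a Hilbert--Schmidt operator, hence compact, and symmetry of the kernel upgrades this to self-adjoint. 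There is no genuine obstacle here; the only point that requires care is verifying the uniform lower bound $H_r \ge (r/2)^d$, and this is delivered immediately by the explicit one-dimensional formulae combined with the product structure inherited from the $L^\infty$ metric.
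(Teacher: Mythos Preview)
Your proof is correct and follows essentially the same approach as the paper: both use the product structure induced by the $L^\infty$ norm to factorise $h_r$ and $H_r$, then invoke the one-dimensional lower bound $H_r^{(1)}\ge r/2$ from \eqref{eqn:H_r_01}--\eqref{eqn:H_r_12} to obtain a uniform pointwise bound on the kernel. The only cosmetic difference is that the paper first reduces to $d=1$ via $K_r^d(x,y)=\prod_i K_r^1(x_i,y_i)$ and bounds $\iint|K_r^1|^2\le 4/r^2$, whereas you carry out the bound directly in dimension $d$; the content is identical.
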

	
	\begin{proof}
Note that, for $x=(x_1,\ldots,x_d)\in B^d$ and $y=(y_1,\ldots,y_d)\in B^d$, we have
		\begin{align*}
		h_r(x,y)=\prod_{i=1}^{d}h_r(x_i, y_i), \qquad\mbox{ and }\qquad H_r(x) = \prod_{i=1}^d H_r(x_i).
		\end{align*}
		Consequently,
		\begin{align}\label{eqn:kprod}
		K_r^d(x,y)=\prod_{i=1}^{d}K_r^1(x_i,y_i).
		\end{align}
		Thus, it suffices to prove the result for $K_r^1$. From the definition and the fact that $h_r(x,y)=h_r(y,x)$ for every $x,y\in B^1$, it follows that $K_r^1$ is real and $K_r^1(x,y)=K_r^1(y,x)$. Hence $K_r^1$ is symmetric.

Next, from \eqref{eqn:H_r_01} and \eqref{eqn:H_r_12}, for all $0<r<2$,  we have $H_r(x)\ge\frac{r}{2}>0$, and hence $K_r^1(x,y)\leq {2}/{r}$. The last equality then gives
\[
	\iint_{B^1\times B^1} |K_r^1(x,y)|^2d\nu_1(x)d\nu_1(y) \ \leq\  \frac{4}{r^2}\ <\ \infty\,,
\]
as required.
	\end{proof}

\subsection{General statements}
In this section we present a few lemmas that are true for all $r\in (0,2)$.
Denote by $(\lambda_i)_{i\ge 1}$ the eigenvalues of $W_{n,r}$ in decreasing order. Since we know that the spectrum of $\cL_{n,r}$ is in $[0,2]$, we have that the spectrum of $W_{n,r}$ is in $[-1,1]$. In the following sections we provide the proofs for our estimates of the spectrum for  different values of $r$. The first eigenvalue, however, is the same for all $r\in (0,2)$.

	\begin{lemma}\label{lem:largestev}
	Let $\mathcal K_r^1$ be as defined above. Then $\lambda_1 = 1$ and $\sqrt{H_r(x)}$ is the corresponding eigenfunction.
	\end{lemma}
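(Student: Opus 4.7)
The plan has two independent pieces: exhibit $\sqrt{H_r}$ as an eigenfunction with eigenvalue $1$, and show that no eigenvalue can exceed $1$ in absolute value (so that $\lambda_1=1$ really is the top of the spectrum).

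For the first piece, the calculation is essentially forced by the definition \eqref{eqn:k_inf}. Plugging $f(y)=\sqrt{H_r(y)}$ into the operator,
\[
(\mathcal K_r^1 \sqrt{H_r})(x)=\int_{B^1}\frac{h_r(x,y)}{\sqrt{H_r(x)H_r(y)}}\sqrt{H_r(y)}\,d\nu_1(y)=\frac{1}{\sqrt{H_r(x)}}\int_{B^1}h_r(x,y)\,d\nu_1(y),
\]
and the right-hand integral is $H_r(x)$ by the definition \eqref{eqn:H}, so the whole expression equals $\sqrt{H_r(x)}$. This is a one-line verification and is the easy half.

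For the second piece, the plan is to prove the operator norm bound $\|\mathcal K_r^1\|_{\mathrm{op}}\le 1$, so that every eigenvalue is in $[-1,1]$. I will apply Cauchy--Schwarz pointwise, exploiting the crucial fact that $h_r$ is $\{0,1\}$-valued so $h_r=h_r^2$. Writing $K_r^1(x,y)f(y)=\frac{h_r(x,y)^{1/2}}{\sqrt{H_r(x)}}\cdot\frac{h_r(x,y)^{1/2}f(y)}{\sqrt{H_r(y)}}$ and applying Cauchy--Schwarz in $y$ gives
\[
|(\mathcal K_r^1 f)(x)|^2\le \frac{1}{H_r(x)}\Bigl(\int h_r(x,y)\,d\nu_1(y)\Bigr)\Bigl(\int \frac{h_r(x,y)f(y)^2}{H_r(y)}\,d\nu_1(y)\Bigr)=\int\frac{h_r(x,y)f(y)^2}{H_r(y)}d\nu_1(y).
\]
Integrating in $x$ and using the symmetry $\int h_r(x,y)\,d\nu_1(x)=H_r(y)$ (Fubini) collapses the bound to $\|f\|_2^2$, yielding $\|\mathcal K_r^1\|_{\mathrm{op}}\le 1$.

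Combining the two pieces, $1$ is in the spectrum with eigenfunction $\sqrt{H_r}$, and no eigenvalue has larger modulus, so $\lambda_1=1$. I do not expect any real obstacle here: the only point requiring care is that $H_r$ stays bounded away from $0$ (already noted in the preceding lemma via \eqref{eqn:H_r_01}--\eqref{eqn:H_r_12}), which ensures that $\sqrt{H_r}$ lies in $L^2$ and that the divisions in the Cauchy--Schwarz step are legitimate.
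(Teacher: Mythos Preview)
Your proof is correct. The eigenfunction verification is identical to the paper's proof. The paper, however, stops there: its proof of this lemma consists solely of the one-line computation showing $\mathcal K_r^1\sqrt{H_r}=\sqrt{H_r}$, and does not separately justify that $1$ is the \emph{largest} eigenvalue. That fact is taken for granted, presumably via the remark preceding the lemma that the spectrum of $W_{n,r}$ lies in $[-1,1]$ (which, strictly speaking, concerns the finite matrices rather than the limiting operator $\mathcal K_r^1$). Your Cauchy--Schwarz/Schur-test argument for $\|\mathcal K_r^1\|_{\mathrm{op}}\le 1$ is a clean, self-contained way to close that gap directly for $\mathcal K_r^1$, and is arguably more complete than what the paper records at this point.
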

	
	\begin{proof}
		Let $f(x)=\sqrt{H_r(x)}$. Then 
		\begin{align*}
		\mathcal K_r^1f(x)=\int_{B^1} \frac{h_r(x,y)}{\sqrt{H_r(x)H_r(y)}}\sqrt{H_r(y)}d\nu_1(y)=\frac{H_r(x)}{\sqrt{H_r(x)}}=f(x).
		\end{align*}
	\end{proof}
		The behavior of the remaining eigenvalues will be studied in the following sections, depending on the value of $r$.	

The next lemma shows that the space of eigenfunctions of $\mathcal K_r^1$ is spanned by a collection of even and odd functions.
		\begin{lemma}\label{lem:eigenfunction}
		Let  $\lambda $ be an  eigenvalue of $\mathcal K_r^1$ with corresponding eigenfunction $f$. Then ${f}^*(x):=f(-x)$ is also an eigenfunction with  the same eigenvalue $\lambda$. Consequently, both of the functions $f(x)+ f^*(x)$ and $f(x)- f^*(x)$ are also eigenfunctions corresponding to the eigenvalue $\lambda$, provided that they do not vanish.
	\end{lemma}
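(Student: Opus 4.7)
The plan is to exploit a basic parity symmetry of the kernel $K_r^1$. First I would observe that the map $x\mapsto -x$ on $B^1=[-1,1]$ preserves everything in sight: the uniform probability measure $\nu_1$ is invariant under this reflection, and from the explicit expressions \eqref{eqn:H_r_01} and \eqref{eqn:H_r_12} for $H_r$, we have $H_r(-x)=H_r(x)$ since $H_r$ depends on $x$ only through $|x|$. Moreover, $h_r(-x,-y)=\mathbf{1}_{|(-x)-(-y)|\le r}=\mathbf{1}_{|x-y|\le r}=h_r(x,y)$. Combining these, the kernel satisfies the symmetry
\[
K_r^1(-x,-y)=K_r^1(x,y)\qquad\text{for all } x,y\in B^1,
\]
and equivalently $K_r^1(x,-y)=K_r^1(-x,y)$.

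Next I would verify that $f^*$ is an eigenfunction by direct computation. Starting from
\[
(\mathcal K_r^1 f^*)(x)=\int_{B^1} K_r^1(x,y)\,f(-y)\,d\nu_1(y),
\]
the change of variables $u=-y$, which preserves $\nu_1$ since $B^1$ is symmetric about the origin, gives
\[
(\mathcal K_r^1 f^*)(x)=\int_{B^1} K_r^1(x,-u)\,f(u)\,d\nu_1(u)=\int_{B^1} K_r^1(-x,u)\,f(u)\,d\nu_1(u)=(\mathcal K_r^1 f)(-x),
\]
where the middle equality uses the symmetry $K_r^1(x,-u)=K_r^1(-x,u)$ established above. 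Since $f$ is an eigenfunction with eigenvalue $\lambda$, the right-hand side equals $\lambda f(-x)=\lambda f^*(x)$, so $f^*$ is an eigenfunction with the same eigenvalue $\lambda$.

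Finally, the statement about $f\pm f^*$ is immediate from linearity: the $\lambda$-eigenspace of $\mathcal K_r^1$ is a linear subspace of $\mathcal H$, so any linear combination of the eigenfunctions $f$ and $f^*$ either lies in this eigenspace or is the zero function. In particular, if neither $f+f^*$ nor $f-f^*$ vanishes identically, both are eigenfunctions corresponding to $\lambda$. There is no real obstacle to this proof: the only nontrivial ingredient is the parity symmetry $K_r^1(-x,-y)=K_r^1(x,y)$, which is essentially built into the definitions via the translation invariance of $h_r$ and the evenness of $H_r$.
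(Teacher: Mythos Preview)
Your proof is correct and follows essentially the same approach as the paper: both arguments establish the parity symmetry $K_r^1(-x,-y)=K_r^1(x,y)$ from the evenness of $H_r$ and the translation-invariance of $h_r$, then apply the change of variables $y\mapsto -y$ to conclude $\mathcal K_r^1 f^*=\lambda f^*$. The only cosmetic difference is that you read off the evenness of $H_r$ from the explicit formulas \eqref{eqn:H_r_01}--\eqref{eqn:H_r_12}, whereas the paper verifies it directly by the same substitution.
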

	
	\begin{proof}
		Let $\lambda$ be an eigenvalue of $\mathcal K_r^1$ with corresponding eigenfunction $f$. Recalling that $B^1 = [-1,1]$, we have 
		\begin{align}\label{eqn:integraleqn}
		\lambda f(x)=\int_{-1}^1 \frac{h_r(x,y)}{\sqrt{H_r(x)H_r(y)}}f(y)d\nu_1(y).
		\end{align}
		Note that
		\[
		H_r(-x)=\int_{-1}^1h_r(-x,y)d\nu_1(y)=\int_{-1}^1h_r(-x,-y)d\nu_1(y)=\int_{-1}^1h_r(x,y)d\nu_1(y)=H_r(x),
		\]
			where we used the change of variables $y\to (-y)$ and the fact that $h_r(-x,-y) = h_r(x,y)$. Therefore $H_r$ is even.
		Setting $f^*(x)=f(-x)$, we then have
		\begin{align*}
		\lambda  f^*(x)&= \lambda  f(-x)
		\\&=\int_{-1}^1 \frac{h_r(-x,y)}{\sqrt{H_r(-x)H_r(y)}}f(y)d\nu_1(y)
		\\& =\int_{-1}^1 \frac{h_r(-x,-y)}{\sqrt{H_r(-x)H_r(-y)}}f(-y)d\nu_1(y)
		\\&=\int_{-1}^1 \frac{h_r(x,y)}{\sqrt{H_r(x)H_r(y)}}f^*(y)d\nu_1(y),
		\end{align*}
		where, as before, we used the change of variables $y\to (-y)$ and the fact that $h_r(-x,-y) = h_r(x,y)$, as well as the fact that $H_r$ is even.
	\end{proof}


To prove some of our statements below,
we will need to use an auxiliary kernel
\begin{equation}\label{eqn:aux_ker}
K_r'(x,y)=\frac{h_r(x,y)}{H_r(x)},
\end{equation}
and $\mathcal K_r':L^2([-1,1],\nu_1)\to L^2([-1,1],\nu_1)$ the Hilbert-Schmidt kernel operator associated with $K_r'$. The next lemma shows a spectral equivalence between $\K'_r$ and  $\K_r^1$.

\begin{lemma}\label{lem:sameeigenvalue}
	Let $0<r<2$. A function $f: B^1\to \R$ is an eigenfunction of $\K'_r$ with eigenvalue $\lambda$ if and only if $\sqrt{H_r(x)}f(x)$ is an eigenfunction of $\K_r^1$ with the same eigenvalue $\lambda$.
\end{lemma}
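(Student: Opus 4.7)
The statement is essentially an intertwining relation between $\K'_r$ and $\K_r^1$: conjugation of $\K'_r$ by multiplication by $\sqrt{H_r}$ should yield $\K_r^1$. My plan is to verify this by a direct computation in both directions, after a short preliminary step to check that the multiplication operator in question is a well-defined bijection on $L^2([-1,1],\nu_1)$.

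First, I would note that by \eqref{eqn:H_r_01} and \eqref{eqn:H_r_12}, for every $0<r<2$ we have $H_r(x)\ge r/2 >0$ for all $x\in B^1$, and of course $H_r(x)\le 1$. Consequently the map $M:f\mapsto \sqrt{H_r}\,f$ is a bounded bijection of $L^2([-1,1],\nu_1)$ onto itself, with bounded inverse $M^{-1}:g\mapsto g/\sqrt{H_r}$. This is what justifies moving back and forth between $f$ and $\sqrt{H_r}\,f$ without running into integrability issues and guarantees that the eigenfunction correspondence is a genuine bijection on the relevant function space.

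Next, assume $\K'_r f=\lambda f$. Unpacking the definition \eqref{eqn:aux_ker}, this means
\[
\int_{-1}^1 h_r(x,y)\,f(y)\,d\nu_1(y) \;=\; \lambda\,H_r(x)\,f(x).
\]
Multiplying the desired eigenfunction candidate into $\K_r^1$ and pulling the $(H_r(x))^{-1/2}$ factor outside the integral,
\[
\K_r^1\bigl(\sqrt{H_r}\,f\bigr)(x) \;=\; \frac{1}{\sqrt{H_r(x)}}\int_{-1}^1 h_r(x,y)\,f(y)\,d\nu_1(y) \;=\; \frac{\lambda\,H_r(x)\,f(x)}{\sqrt{H_r(x)}} \;=\; \lambda\sqrt{H_r(x)}\,f(x),
\]
which is exactly what we want. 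The converse is the same calculation read backwards: if $\K_r^1 g=\lambda g$, then writing $f:=g/\sqrt{H_r}$ (which is legitimate by the preliminary step), one has $g=\sqrt{H_r}\,f$, and inserting $g=\sqrt{H_r}\,f$ into $\K_r^1 g=\lambda g$ and dividing through by $\sqrt{H_r(x)}$ gives $\K'_r f=\lambda f$.

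There is really no significant obstacle here; the argument is a one-line conjugation by a positive multiplication operator. The only point that requires a moment of care is the positivity/boundedness of $H_r$, which ensures that the correspondence between eigenfunctions of $\K'_r$ and of $\K_r^1$ is actually defined on the same Hilbert space; this is why I would make the strict lower bound $H_r\ge r/2$ explicit at the outset.
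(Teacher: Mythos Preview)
Your proof is correct and follows essentially the same approach as the paper: both arguments are a direct computation showing that the positivity of $H_r$ (namely $H_r\ge r/2>0$) allows one to divide or multiply by $\sqrt{H_r}$ and thereby pass between the eigenvalue equations for $\K'_r$ and $\K_r^1$. Your version is slightly more explicit in framing this as conjugation by a bounded multiplication operator on $L^2$, but the underlying argument is identical.
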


\begin{proof}
	A function $f$ is an eigenfunction of $\K_r'$ with eigenvalue $\lambda$ if, and only if, for every $x\in [-1,1]$
	\[
	\lambda H_r(x)f(x) = \int_{-1}^1 h_r(x,y) f(y)d\nu_1(y) = \int_{-1}^1 \frac{h_r(x,y)}{\sqrt{H_r(y)}} \sqrt{H_r(y)}f(y)d\nu_1(y).
	\] 
	In turn, the above is true if, and only if, for every $x\in [0,1]$ (recall that by \eqref{eqn:H_r_01} and \eqref{eqn:H_r_12} $H_r(x)>0$ for every $x\in [0,1]$)
	\[
	\lambda \sqrt{H_r(x)}f(x) =  \int_{-1}^1 \frac{h_r(x,y)\sqrt{H_r(y)}}{\sqrt{H_r(x)H_r(y)}} f(y)d\nu_1(y) = \int_{-1}^1 K_r^1(x,y) \sqrt{H_r(y)}f(y)d\nu_1(y),
	\] 
	which holds if, and only if, $\sqrt{H_r(x)} f(x)$ is an eigenfunction of $\K_r^1$ with eigenvalue $\lambda$. 
\end{proof}
In the next lemma we show that the eigenfunctions of  $\K_r^d$ are continuous.

\begin{lemma}\label{lem:continuous}
	Let $f$ be an eigenfunction of $\K_r^d$	corresponding to  a non-zero eigenvalue. Then $f$ is continuous on $[-1,1]$.
\end{lemma}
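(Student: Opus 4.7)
The plan is to show that, since $\lambda\neq 0$, the eigenvalue equation forces $f$ to agree almost everywhere with an explicitly continuous function, and then to identify $f$ with this continuous representative. Writing out the eigenvalue equation, for $\nu_d$-a.e.\ $x\in B^d$,
\[
f(x) \ =\ \frac{1}{\lambda}\,(\mathcal K_r^d f)(x)\ =\ \frac{1}{\lambda\sqrt{H_r(x)}}\int_{B^d}\frac{h_r(x,y)\,f(y)}{\sqrt{H_r(y)}}\,d\nu_d(y).
\]
So it suffices to prove that the right-hand side is a continuous function of $x\in B^d$.

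First I would handle the prefactor $1/\sqrt{H_r(x)}$. From the explicit formulas \eqref{eqn:H_r_01} and \eqref{eqn:H_r_12} together with the product identity $H_r(x)=\prod_{i=1}^d H_r(x_i)$, the function $H_r$ is continuous on $B^d$ and satisfies $H_r(x)\ge (r/2)^d>0$. Hence $x\mapsto 1/\sqrt{H_r(x)}$ is continuous and bounded on $B^d$. Moreover, by Cauchy--Schwarz, the function $g(y):=f(y)/\sqrt{H_r(y)}$ lies in $L^1(B^d,\nu_d)$, since $f\in L^2$ and $1/\sqrt{H_r}$ is bounded.

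The core step is to show that, for any $g\in L^1(B^d,\nu_d)$, the map
\[
x\ \longmapsto\ F(x):=\int_{B^d} h_r(x,y)\,g(y)\,d\nu_d(y)
\]
is continuous. Given $x_n\to x$ in $B^d$, dominated convergence is the natural tool: the integrand is dominated by $|g(y)|\in L^1$, so the only thing to check is pointwise convergence $h_r(x_n,y)\to h_r(x,y)$ for $\nu_d$-a.e.\ $y$. For every $y$ with $\|x-y\|_\infty\neq r$, the indicator $\mathbf 1_{\|x_n-y\|_\infty\le r}$ stabilizes to $\mathbf 1_{\|x-y\|_\infty\le r}$ for large $n$, so convergence holds off the set $S_x:=\{y\in B^d:\|x-y\|_\infty=r\}$. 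Since $S_x$ is contained in a finite union of $(d-1)$-dimensional faces of the $L^\infty$-ball of radius $r$ around $x$, it has $\nu_d$-measure zero, and dominated convergence applies.

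Combining the pieces, $\frac{1}{\lambda\sqrt{H_r(x)}}\,F(x)$ is continuous on $B^d$, and the eigenvalue equation asserts $f=\lambda^{-1}\mathcal K_r^d f$ almost everywhere. Therefore $f$ admits (and is canonically identified with) a continuous representative on $B^d$, proving the lemma. The only delicate point is the measure-zero fact for the discontinuity set of $h_r$, which is where the choice of the $L^\infty$ norm makes life particularly clean; any other norm would require a similar but slightly more geometric argument about the boundary of a metric ball having measure zero.
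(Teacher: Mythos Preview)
Your proof is correct and takes a somewhat different route from the paper's. The paper reduces to $d=1$ via the product structure of the eigenfunctions, then proves continuity by a direct $\epsilon$--$\delta$ estimate: it bounds $|f(x)-f(x')|$ by splitting the kernel difference into a term controlled by the continuity of $H_r$ and a term controlled by the measure of $\{y:h_r(x,y)\neq h_r(x',y)\}$, which is at most $|x-x'|$, and then closes with Cauchy--Schwarz. Your approach instead works directly in $B^d$ and replaces the quantitative estimate with a dominated convergence argument: the integrand is dominated by $|g|\in L^1$ and $h_r(x_n,y)\to h_r(x,y)$ off the measure-zero sphere $\{y:\|x-y\|_\infty=r\}$. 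This is cleaner and avoids both the reduction to $d=1$ and the explicit bookkeeping; the paper's argument, on the other hand, yields an explicit modulus of continuity (essentially $|f(x)-f(x')|\lesssim \varepsilon+\sqrt{|x-x'|}$), which is not needed here but could be useful elsewhere. You are also more careful than the paper about the $L^2$-versus-pointwise issue, correctly framing the conclusion as ``$f$ has a continuous representative.''
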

\begin{proof}
	It is enough to show that the result holds for $d=1$, due to the product form of  the eigenfunctions of $\K_r^d$.
	
	Let $f$ be an eigenfunction of $\K_r^1$ with corresponding, non-zero, eigenvalue $\lambda$ . Without loss of generality we assume that $\|f\|=1$. Therefore, we have
	\begin{align*}
	 f(x)=\frac{1}{\lambda}\int_{B^1} \frac{h_r(x,y)}{\sqrt{H_r(x)H_r(y)}}f(y)d\nu_1(y),
	\end{align*}
implying that	
\begin{equation}\label{eqn:con}
\begin{split}
		|f(x)-f(x')|&\le \frac{1}{|\lambda|}\int \l|\frac{h_r(x,y)}{\sqrt{H_r(x)H_r(y)}}-\frac{h_r(x',y)}{\sqrt{H_r(x')H_r(y)}}\r||f(y)|d\nu_1(y)
		\\ &\le \frac{2\sqrt 2}{|\lambda| r \sqrt r}\int \l|\sqrt {H_r(x')}h_r(x,y)-\sqrt{H_r(x)}h_r(x',y)\r||f(y)|d\nu_1(y).
	\end{split}
\end{equation}
	The last equation follows from the fact that $H_r(x)\ge r/2$. By the triangle inequality we have 
	\begin{eqnarray*}
		&&\l|\sqrt {H_r(x')}h_r(x,y)-\sqrt{H_r(x)}h_r(x',y)\r|\\
		&&\qquad\qquad \le \l|\sqrt{H_r(x')}-\sqrt{H_r(x)}\r|h_r(x,y)+\sqrt{H_r(x)}|h_r(x,y)-h_r(x',y)|.
	\end{eqnarray*}
	Note that $H_r(x)$ is continuous in $[-1,1]$. Therefore, for every $\eps>0$, there exists $\delta>0$ such that 
	\begin{align*}
		\l|\sqrt{H_r(x')}-\sqrt{H_r(x)}\r| \le \eps, \ \ \mbox{ if $|x-x'|<\delta$}.
	\end{align*}
	Note that $h_r(x,y)\le 1$ and $H_r(x)\le 1$, for $0<r<2$. Therefore, if $|x-x'|<\delta$, then
	\begin{align}\label{eqn:h11}
	\l|\sqrt {H_r(x')}h_r(x,y)-\sqrt{H_r(x)}h_r(x',y)\r|\le \eps + |h_r(x,y)-h_r(x',y)|.
	\end{align}
		Observe that, if $|x-x'|<\delta$, then 
	\begin{align*}
	|h_r(x,y)-h_r(x',y)|=\l\{\begin{array}{ll}
	1 & \mbox{ if $y\in (x\wedge x'+r,x\vee x'+r)\cup (x\wedge x'-r,x\vee x'-r)$,}
	\\
	\\ 0 & \mbox{ otherwise,}	\end{array}\r.
	\end{align*}
	where $x\wedge x'=\min\{x,x'\}$ and $x\vee x'=\max \{x,x'\}$. Thus $\nu_1(\{y\; :\; |h_r(x,y)-h_r(x',y)|=1\})\le \delta$.
	Therefore,  for  $|x-x'|<\delta$, by \eqref{eqn:h11}	and  the Cauchy-Schwarz inequality, we have
	\begin{align*}
		&\int \l|\sqrt {H_r(x')}h_r(x,y)-\sqrt{H_r(x)}h_r(x',y)\r||f(y)|d\nu_1(y)
		\\&\qquad\le \eps \int |f(y)|d\nu_1(y)+ \int |h_r(x,y)-h_r(x',y)| |f(y)|d\nu_1(y)
		\\&\qquad\le \eps \|f\|+\|f\|\sqrt{\int |h_r(x,y)-h_r(x',y)|^2d\nu_1(y)}\le \eps + \sqrt{\delta}.		
	\end{align*}
	The last inequality follows from the fact that $\nu_1(\{y\; :\; |h_r(x,y)-h_r(x',y)|=1\})\le \delta$ and $\|f\|=1$.
	Thus \eqref{eqn:con} implies the result, as $\eps, \delta>0$ can be chosen arbitrarily small.
\end{proof}

	\subsection{The spectrum of $\mathcal K_1^d$} The goal of this subsection is analyze the spectrum in the case $r=1$. 

	\begin{lemma}\label{lem:egv}
		Denote by $(\lambda_i)_{i\geq 1}$ the eigenvalues of $\K_1^1$, in decreasing order. Then $\lambda_2=1/2$ with matching eigenfunction $x\sqrt{H_1(x)}$, and for all $i\ge 3$ we have $\lambda_i\in (-0.3,0.3)$.
	\end{lemma}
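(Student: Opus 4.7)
The plan is to combine a direct verification that $1/2$ is an eigenvalue with a Hilbert-Schmidt norm computation, and then squeeze the remaining spectrum using the identity $\sum_{i}\lambda_i^2=\|\K_1^1\|_{HS}^2$.

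\textbf{Step 1 (the eigenfunction at $1/2$).} First I will verify the candidate eigenfunction by direct calculation. For $f(y)=y\sqrt{H_1(y)}$ and $x\in[0,1]$, one has $H_1(x)=(2-x)/2$ and $h_1(x,y)=1$ iff $y\in[x-1,1]$, so
\[
(\K_1^1 f)(x) \;=\; \frac{1}{2\sqrt{H_1(x)}}\int_{x-1}^{1} y\,dy \;=\; \frac{x(2-x)}{4\sqrt{H_1(x)}} \;=\; \tfrac{1}{2}x\sqrt{H_1(x)}.
\]
Since $f$ is odd and $K_1^1(-x,-y)=K_1^1(x,y)$, Lemma~\ref{lem:eigenfunction} extends this identity to $x\in[-1,0]$, so $1/2\in\spec(\K_1^1)$ with eigenfunction $y\sqrt{H_1(y)}$.

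\textbf{Step 2 (the Hilbert-Schmidt norm).} Next I will compute $\|\K_1^1\|_{HS}^2$. Since $h_1^2=h_1$,
\[
\|\K_1^1\|_{HS}^2 \;=\; \iint_{\substack{|x|,|y|\le 1\\|x-y|\le 1}}\frac{dx\,dy}{(2-|x|)(2-|y|)}.
\]
Splitting the domain into its four sign-quadrants and using the symmetry $(x,y)\mapsto(-x,-y)$, the contribution from $[0,1]^2\cup[-1,0]^2$ is automatic and equal to $2(\ln 2)^2$, while the cross-contribution over $[0,1]\times[-1,0]$ (constrained to $y\ge x-1$) reduces to $2\int_0^1 \ln(2/(1+x))/(2-x)\,dx$. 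After the substitution $t=1+x$ followed by $u=3-t$, this one-dimensional integral evaluates via the dilogarithm, producing the closed form
\[
\|\K_1^1\|_{HS}^2 \;=\; 4(\ln 2)^2 - 2\ln 2\,\ln 3 + 2\,\mathrm{Li}_2(2/3) - 2\,\mathrm{Li}_2(1/3) \;\approx\; 1.331,
\]
so in particular $\|\K_1^1\|_{HS}^2 < 1.34 = 1+\tfrac14+(0.3)^2$.

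\textbf{Step 3 (spectral squeeze).} Finally, from $\lambda_1=1$ (Lemma~\ref{lem:largestev}) and $1/2\in\spec(\K_1^1)$ (Step 1), the constraint $\sum_i\lambda_i^2<1.34$ does the rest. A second eigenvalue equal to $1$ would force $\sum\lambda_i^2\ge 2$; an eigenvalue $\lambda^*\in(1/2,1)$ would, together with $\lambda_1=1$ and $1/2$, give $\sum\lambda_i^2>1+1/4+1/4=3/2$; a multiplicity-two occurrence of $1/2$ would give $\sum\lambda_i^2\ge 1+1/2=3/2$. Each is incompatible with $\|\K_1^1\|_{HS}^2<1.34$. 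Hence $\lambda_1=1$ and $\lambda_2=1/2$ are simple, and for every $i\ge 3$,
\[
\lambda_i^2 \;\le\; \|\K_1^1\|_{HS}^2 - 1 - \tfrac14 \;<\; 0.09,
\]
so $|\lambda_i|<0.3$.

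\textbf{Main obstacle.} The quantitative bound in Step 2 is tight: numerically $\|\K_1^1\|_{HS}^2\approx 1.331$, leaving only about $0.009$ of slack against the threshold $1.34$. The dilogarithm formula gives one clean route; alternatively one can verify the bound rigorously via truncated series for $\mathrm{Li}_2(1/3)$ and $\mathrm{Li}_2(2/3)$ (with explicit geometric-tail remainder bounds) together with Euler's reflection identity $\mathrm{Li}_2(z)+\mathrm{Li}_2(1-z)=\pi^2/6-\ln z\,\ln(1-z)$, which yield $J_1 := \int_0^1\ln(1+x)/(2-x)\,dx \ge 0.293 > 0.2909$ and hence $\|\K_1^1\|_{HS}^2 = 4(\ln 2)^2-2J_1 < 1.34$.
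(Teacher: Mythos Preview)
Your proof is correct and follows essentially the same route as the paper: verify that $x\sqrt{H_1(x)}$ is an eigenfunction with eigenvalue $1/2$, compute the Hilbert--Schmidt norm (the paper leaves it as $4(\ln 2)^2 - 2\int_0^1 \frac{\ln(1+x)}{2-x}\,dx \approx 1.333$ rather than pushing to dilogarithms), and then use $\sum_i \lambda_i^2 = \|\K_1^1\|_{HS}^2$ to squeeze the tail below $0.09$. Your Step~3 is somewhat more explicit than the paper in ruling out additional eigenvalues in $[1/2,1]$, but the underlying argument is identical.
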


	
	\begin{proof}
		Denote by $(\varphi_i)_{i\geq 1}$ the orthonormal eigenfunctions corresponding to the eigenvalues $(\lambda_i)_{i\ge 1}$. By the spectral theorem for self-adjoint, compact operators 
		\begin{align*}
		K_1^1(x,y)=\sum_{i=1}^{\infty}\lambda_i\varphi_i(x)\varphi_i(y),
		\end{align*}
		which implies that 
		\begin{align}\label{eqn1}
		\iint_{B^1\times B^1} |K_1^1(x,y)|^2d\nu_1(x)d\nu_1(y)=\sum_{i=1}^{\infty}\lambda_i^2.
		\end{align}
		On the one hand, for $r=1$, we have $H_1(x)=1-|x|/2$, and therefore 
		\begin{equation}\label{eqn2}
		\begin{split}
		&\iint_{B^1\times B^1} |K_1^1(x,y)|^2d\nu_1(x)d\nu_1(y)
		=\int_{-1}^1\int_{-1}^1 \frac{\one_{|x-y|\le 1}}{(2-|x|)(2-|y|)}dx dy
		\\=&\int_{-1}^0\int_{-1}^{1+x}  \frac{1}{(2-|x|)(2-|y|)}dx dy+\int_{0}^1\int_{x-1}^1 \frac{1}{(2-|x|)(2-|y|)}dx dy.
		\end{split}
		\end{equation}
		Noting that 
	\[	\int_{-1}^{1+x} \frac{1}{(2-|y|)} dy= 2\log 2-\log(1-x),\quad \text{and}\quad \int_{x-1}^{1} \frac{1}{(2-|y|)} dy=2\log 2-\log(1+x)\,,
		\]
		we conclude 
		\[	\iint_{B^1\times B^1} |K_1^1(x,y)|^2d\nu_1(x)d\nu_1(y)
		=4(\log 2)^2-2\int_{0}^{1}\frac{\log(1+x)}{2-x}dx \approx 1.33299.
		\]
    		Next, it is easy to see that $\sqrt{H_1(x)}$ and 
		$x\sqrt{H_1(x)}$
		are eigenfunctions of $\K_1^1$, with eigenvalues $\lambda_1 = 1$ and $\lambda_2=1/2$, respectively.  Therefore,
		\begin{align*}
		\sum_{i=3}^{\infty}\lambda_i^2\approx 0.0829<0.09\,,
		\end{align*}
		and so  $|\lambda_i|<0.3$ for all $i\ge 3$, as required.
	\end{proof}

	\begin{lemma}\label{lem:alleigenvalues}
		For every $0\leq k\leq d$ the operator $\K_1^d$ has eigenvalue ${1}/{2^k}$ with multiplicity at least $\binom{d}{k}$. Moreover, the rest of the eigenvalues lie in $(-0.3,0.3)$. 
	\end{lemma}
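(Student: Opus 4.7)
The plan is to exploit the tensor product structure provided by \eqref{eqn:kprod}, namely $K_1^d(x,y) = \prod_{i=1}^{d} K_1^1(x_i,y_i)$. This identifies $\K_1^d$ with the $d$-fold tensor power $(\K_1^1)^{\otimes d}$ acting on $L^2(B^d,\nu_d) \cong L^2([-1,1],\nu_1)^{\otimes d}$. Since $\K_1^1$ is compact and self-adjoint, the spectral theorem produces an orthonormal basis $\{\varphi_j\}_{j\geq 1}$ of $L^2([-1,1],\nu_1)$ consisting of eigenfunctions of $\K_1^1$ with eigenvalues $(\lambda_j)_{j\geq 1}$ (including, if necessary, a basis of $\ker \K_1^1$ carrying eigenvalue $0$). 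The product functions $\varphi_{j_1}(x_1)\cdots\varphi_{j_d}(x_d)$ then form an orthonormal basis of $L^2(B^d,\nu_d)$, and each is an eigenfunction of $\K_1^d$ with eigenvalue $\lambda_{j_1}\cdots\lambda_{j_d}$. In this way the full spectrum of $\K_1^d$, counted with multiplicity, is realised as the multiset of all such products.

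Next I would invoke Lemma \ref{lem:egv} to extract the key facts about $\spec(\K_1^1)$: $\lambda_1 = 1$ with eigenfunction proportional to $\sqrt{H_1(x)}$, $\lambda_2 = 1/2$ with eigenfunction proportional to $x\sqrt{H_1(x)}$, and $|\lambda_j| < 0.3$ for every $j\geq 3$. In particular $|\lambda_j| \leq 1$ for all $j$. Fix $k\in\{0,1,\ldots,d\}$ and consider the multi-indices $(j_1,\ldots,j_d)\in\{1,2\}^d$ with exactly $k$ coordinates equal to $2$; there are $\binom{d}{k}$ of them, and each yields an eigenfunction of $\K_1^d$ with eigenvalue $\lambda_{j_1}\cdots\lambda_{j_d} = 1/2^k$. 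Since these $\binom{d}{k}$ product eigenfunctions are mutually orthogonal, the eigenvalue $1/2^k$ of $\K_1^d$ has multiplicity at least $\binom{d}{k}$, which proves the first claim.

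For the second claim, any remaining eigenvalue of $\K_1^d$ corresponds to a multi-index $(j_1,\ldots,j_d)$ in which at least one coordinate $j_i\geq 3$. Singling out such an index, one gets
\[
\bigl|\lambda_{j_1}\cdots\lambda_{j_d}\bigr| \;=\; |\lambda_{j_i}|\prod_{\ell\neq i}|\lambda_{j_\ell}| \;<\; 0.3\cdot 1^{d-1} \;=\; 0.3,
\]
using $|\lambda_{j_i}|<0.3$ and $|\lambda_{j_\ell}|\leq 1$ for $\ell\neq i$. Hence each such eigenvalue lies in $(-0.3,0.3)$, as required.

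The whole argument is essentially bookkeeping once the tensor-product decomposition is in place, and I do not anticipate a genuine obstacle. The only point deserving a brief sentence of justification is the identification of the spectrum of $\K_1^d$ with products of eigenvalues of $\K_1^1$ — that is, the completeness and orthonormality of the product basis $\{\varphi_{j_1}\otimes\cdots\otimes\varphi_{j_d}\}$ in $L^2(B^d,\nu_d)$ — which is the standard tensor-product statement and follows directly from the factorisation \eqref{eqn:kprod}.
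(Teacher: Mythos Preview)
Your proposal is correct and follows essentially the same approach as the paper: exploit the product structure \eqref{eqn:kprod} to identify the spectrum of $\K_1^d$ with the multiset of products $\lambda_{j_1}\cdots\lambda_{j_d}$ via the tensor basis of eigenfunctions, then invoke Lemma \ref{lem:egv} to count the $\binom{d}{k}$ multi-indices in $\{1,2\}^d$ giving $1/2^k$ and to bound the remaining products by $0.3$. Your explicit mention of including a basis of $\ker\K_1^1$ is a small clarification the paper leaves implicit.
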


	\begin{proof}[Proof of Lemma \ref{lem:alleigenvalues}]
		Let $(\lambda_i)_{i\ge1}$ be the eigenvalues of $\mathcal K_1^1$ listed with multiplicities in decreasing order, and $(\varphi_i)_{i\ge1}$  the corresponding orthonormal eigenfunctions. Since $\mathcal K_1^1$ is  a compact and self-adjoint operator, the spectral theorem implies that $(\varphi_i)_{i\ge1}$ form an orthonormal basis of $L^2(B^1,\nu_1)$. Recall that $L^2(B^d,\nu_d)$ is the space of functions on $B^1\times \cdots \times B^1$ ($d$-times) with respect to the product measure $\nu_1\times \cdots \times \nu_1$ ($d$-times). Therefore $(\varphi_{i_1,\ldots,i_d})_{i_1,\ldots, i_d\in \N}$ is an orthonormal basis for $L^2(B^d, \nu_d)$, where  $\varphi_{i_1,\ldots,i_d}(x):=\varphi_{i_1}(x_1)\cdots\varphi_{i_d}(x_d)$ for all $x\in B^d$.

Using \eqref{eqn:kprod}, for $i_1,\ldots, i_d\in \N$  and $x\in  B^d$  we have
		\begin{align*}
		\mathcal K_1^d\varphi_{i_1,\ldots,i_d}(x)=\int K_1^d(x,y)\varphi_{i_1,\ldots,i_d}(y)d\nu_d(y)=\lambda_{i_1}\cdots \lambda_{i_d}\varphi_{i_1,\ldots,i_d}(x).
		\end{align*}
		Hence  $(\lambda_{i_1}\cdots \lambda_{i_d})_{i_1,\ldots,i_d\in \N}$ forms the complete list of eigenvalues of $\mathcal K_1^d$ including multiplicities.  In particular, by Lemma \ref{lem:egv}, if there exists $1\le k\le d$ such that $i_k\ge 3$ then 
		\begin{align*}
		|\lambda_{i_1}\cdots \lambda_{i_d}|<0.3.
		\end{align*}
		Lemma \ref{lem:egv} also implies that $\lambda_1=1$ and $\lambda_2=1/2$. Thus, by considering all the eigenvalues corresponding to $i_1,\ldots,i_d\in \{1,2\}$, we get $1/2^k$ as an eigenvalue of $\mathcal K_1^d$ with multiplicity at least $\binom{d}{k}$, for $k=0,\ldots, d$. This completes the proof.
	\end{proof}

	\subsection{The spectrum of $\mathcal K_r^d$ for $1<r<2$} 

	In this subsection we estimate the eigenvalues of $\mathcal K_r^d$ for $1<r<2$. Recall that $\mathcal K_r^d$ is the Hilbert-Schmidt kernel operator with the kernel $K_r^d$, as defined in \eqref{eqn:k_inf}. We start by analyzing the spectrum in the case $d=1$. 
	
	\begin{lemma}\label{lem:secondevforL}
		Let  $1<r<2$, and denote by $(\lambda_i)_{i\geq 1}$ the eigenvalues of $\mathcal K^1_r$ in decreasing order. Then 
		\[
		|\lambda_{i}|<\smallhalf, \qquad  i\ge 2.
		\]
	\end{lemma}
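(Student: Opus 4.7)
The plan is to reduce, via the parity splitting of Lemma \ref{lem:eigenfunction}, to self-adjoint integral operators on the ``tail'' subintervals of $[-1, 1]$, and then apply a Hilbert--Schmidt norm estimate. Since Lemma \ref{lem:largestev} identifies $\lambda_1 = 1$ as the even eigenvalue with eigenfunction $\sqrt{H_r}$, it suffices to bound $|\lambda|$ for every other eigenvalue of $\mathcal K_r^1$, on the odd and even subspaces separately.

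Passing to the equivalent operator $\mathcal K_r'$ via Lemma \ref{lem:sameeigenvalue}, I would first exploit the ``middle region'' $\{|x| \le r-1\}$, where $H_r(x) = 1$ and $h_r(x,y) = 1$ for every $y \in [-1, 1]$. The eigenvalue equation there collapses to $\lambda g(x) = \int g\, d\nu_1$, so every eigenfunction with $\lambda \ne 0$ is constant on the middle region, and for odd $g$ this constant is forced to vanish. The remaining equation on $x \in [r-1, 1]$, folded by oddness, reads
\[
\lambda(1+r-x)\, g(x) = \int_{r-x}^1 g(y)\, dy,
\]
which after the substitution $u = 1-x$, $v = 1-y$, $a = 2-r$ becomes a self-adjoint integral operator on $L^2([0, a], du)$ with symmetric kernel
\[
K_{\mathrm{odd}}(u, v) = \frac{\mathbf 1_{u+v \le a}}{\sqrt{(r+u)(r+v)}}.
\]
An analogous reduction in the even subspace (where the constant value $c$ on the middle is pinned down by the orthogonality $\int g H_r\, d\nu_1 = 0$) produces a companion symmetric kernel on the same domain.

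The heart of the argument is then the Hilbert--Schmidt bound
\[
\sup_{i} \bigl(\lambda_i^{\mathrm{odd}}\bigr)^2 \;\le\; \|K_{\mathrm{odd}}\|_{HS}^2 \;=\; \iint_{u + v \le a,\ [0,a]^2}\frac{du\, dv}{(r+u)(r+v)},
\]
together with its analog in the even subspace. Both quantities are closed-form functions of $r$ expressible via elementary logarithmic integrals, and the aim is to verify each is strictly less than $1/4$ throughout $(1, 2)$, which would yield $|\lambda_i|^2 < 1/4$ for every $i \ge 2$.

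The main obstacle is that near $r = 1$ the HS integral is close to its value at $r = 1$, which exceeds $1/4$ (consistent with the fact, from Lemma \ref{lem:egv}, that $\lambda_2 = 1/2$ is genuinely attained there). To close this gap on a right-neighborhood of $r = 1$, I would sharpen the estimate by iterating the kernel and using $\sum_i (\lambda_i^{\mathrm{odd}})^4 \le \|K_{\mathrm{odd}} \ast K_{\mathrm{odd}}\|_{HS}^2$, which is a strictly tighter bound on the leading eigenvalue; alternatively, a first-order perturbation analysis around the eigenfunction $x \sqrt{H_1(x)}$ at $r = 1$ should show that the corresponding eigenvalue decreases strictly as $r$ passes through $1$. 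A continuity argument would then merge these regimes to give the required uniform strict bound $|\lambda_i| < 1/2$ for all $i \ge 2$ and all $r \in (1, 2)$.
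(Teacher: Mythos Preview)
Your parity split and the reduction via $\mathcal K_r'$ are the right opening moves, and your observation that eigenfunctions are constant on the middle region $\{|x|\le r-1\}$ is correct and useful. But the proposal does not close: you yourself identify that the Hilbert--Schmidt bound on the reduced kernel exceeds $1/4$ as $r\downarrow 1$, and the fixes you sketch do not repair this. The iterated-kernel bound $\sum\lambda_i^4\le\|K_{\mathrm{odd}}*K_{\mathrm{odd}}\|_{HS}^2$ is indeed tighter, but you have not computed it, and there is no a priori reason it drops below $1/16$ on a full right-neighborhood of $r=1$. The first-order perturbation at $r=1$ gives only an infinitesimal statement $\lambda_2'(1)<0$, not a bound on any open interval. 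And the ``continuity merge'' presupposes continuity of $\lambda_2(r)$ in $r$, which you have not established; even granting it, you would still need the two regimes to overlap, which you have not checked. As written this is a plan with a known hole and three vague patches, none of which is carried out.

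The paper's argument is quite different and avoids the endpoint difficulty entirely. For odd eigenfunctions it uses a direct variational comparison: for $f$ odd and nonnegative on $[0,1]$ one has $\langle\mathcal K_r^1 f,f\rangle<\langle\mathcal K_1^1 f,f\rangle\le\tfrac12$, because $H_r\ge H_1$ pointwise and the effective integration region $[(r-x)\wedge1,1]$ shrinks strictly for $r>1$; this gives the strict bound uniformly in $r\in(1,2)$ in one stroke. For even eigenfunctions orthogonal to $\sqrt{H_r}$, it works pointwise in $L^\infty$ on $g=f/\sqrt{H_r}$: using the orthogonality $\int_0^1 gH_r=0$ to subtract a free multiple of $H_r$ from the kernel identity, one obtains the explicit bounds $|\lambda|\le(2-r)/2$ (if $\|g\|_\infty$ is attained in $[r-1,1]$) or $|\lambda|\le(2-r)^2/4$ (if attained in $[0,r-1]$). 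Both are strictly less than $1/2$ for $r>1$. No continuity in $r$, no iteration, and no perturbation are needed.
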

	
	We will show that the statement of Lemma \ref{lem:secondevforL} holds for even and odd eigenfunctions separately. 
Using Lemma \ref{lem:eigenfunction}, this will suffice to cover all the eigenfunctions.

\begin{lemma}\label{lem:odd}
	Let $1<r< 2$, and $\lambda$ be an eigenvalue $\mathcal K_r^1$ with an odd eigenfunction. Then $|\lambda|<\smallhalf$.
\end{lemma}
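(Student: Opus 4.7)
My plan is to transform the eigenvalue problem to an integral equation on a subinterval and then bound $|\lambda|$ via either a direct inequality or a Sturm--Liouville analysis. By Lemma \ref{lem:sameeigenvalue}, an odd eigenfunction $f$ of $\mathcal K_r^1$ with eigenvalue $\lambda$ corresponds to an eigenfunction $g = f/\sqrt{H_r}$ of $\mathcal K_r'$, and $g$ remains odd because $H_r$ is even (cf.\ Lemma \ref{lem:eigenfunction}). For $r>1$ and $|x|\le a:=r-1$, the averaging window $[x-r,x+r]$ contains $[-1,1]$, so $(\mathcal K_r'g)(x) = \frac{1}{H_r(x)}\int_{-1}^1 g(y)\,d\nu_1(y) = 0$ by oddness. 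Thus, assuming $\lambda\neq 0$, $g$ vanishes on $[-a,a]$. Setting $\phi := g|_{[a,1]}$ and exploiting oddness to convert the integral, the eigenvalue equation reduces to
\[
\lambda(1+r-x)\,\phi(x) \;=\; \int_{r-x}^1 \phi(y)\,dy, \qquad x\in[a,1],
\]
and evaluating at $x=a$ yields the boundary condition $\phi(a)=0$.

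Next I multiply by $\phi(x)$, integrate, and apply AM--GM on the region $R=\{(x,y)\in[a,1]^2 : x+y\ge r\}$ to obtain
\[
|\lambda|\int_a^1(1+r-x)\phi(x)^2\,dx \;\le\; \int_a^1(x-a)\,\phi(x)^2\,dx,
\]
giving $|\lambda|\le\sup_{x\in[a,1]}\tfrac{x-a}{1+r-x}=\tfrac{2-r}{r}$. This is strictly less than $1/2$ already for $r>4/3$. For $r\in(1,4/3]$, I would recast the integral equation as a Sturm--Liouville eigenvalue problem: differentiating and using the functional relation $\phi(r-x)=-\lambda(1+r-x)\phi'(x)$ together with its counterpart at $r-x$ to eliminate the delayed argument, then substituting $u=1-x$, one obtains
\[
\bigl[(2-u)(r+u)^2\psi'(u)\bigr]' + \mu(r+u)\,\psi(u) = 0, \qquad u\in[0,2-r],
\]
with Robin condition $r\psi'(0)+\psi(0)=0$ and Dirichlet condition $\psi(2-r)=0$, where $\mu=(1-\lambda^2)/\lambda^2$. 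The bound $|\lambda|<1/2$ becomes $\mu_1(r)>3$ for the smallest eigenvalue; at $r=1$ equality is attained exactly with eigenfunction $\psi(u)=1-u$ (matching the known odd eigenfunction $g(x)=x$ of $\mathcal K_1'$ with $\lambda=1/2$).

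The main obstacle is establishing the strict inequality $\mu_1(r)>3$ on the whole interval $r\in(1,2)$: since equality holds at $r=1$, no lossy estimate can close the gap for $r$ close to $1$. I would attack this via the variational characterization of $\mu_1$, showing that the quadratic form
\[
Q_r[\psi] \;:=\; \int_0^{2-r}(2-u)(r+u)^2\psi'(u)^2\,du \;-\; 2r\,\psi(0)^2 \;-\; 3\int_0^{2-r}(r+u)\,\psi(u)^2\,du
\]
is strictly positive on nonzero $\psi\in H^1[0,2-r]$ with $\psi(2-r)=0$, by combining a first-order perturbation computation around the critical pair $(r,\psi)=(1,1-u)$ (at which $Q$ vanishes identically) with a monotonicity-in-$r$ argument that rules out any return of $\mu_1(r)$ to $3$ for $r\in(1,2)$.
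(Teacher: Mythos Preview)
Your reduction to the integral equation on $[a,1]$ with $a=r-1$ is correct, and the AM--GM bound $|\lambda|\le (2-r)/r$ is valid, settling the case $r>4/3$. But for $r\in(1,4/3]$ you only \emph{outline} a Sturm--Liouville strategy and explicitly flag the main obstacle without resolving it: proving $\mu_1(r)>3$ strictly when equality holds at $r=1$ is exactly the hard part, and neither the first-order perturbation at $r=1$ nor the monotonicity-in-$r$ claim is carried out. (There is also a slip in your delayed-ODE relation: differentiating $\lambda(1+r-x)\phi(x)=\int_{r-x}^1\phi(y)\,dy$ gives $\phi(r-x)=\lambda[(1+r-x)\phi'(x)-\phi(x)]$, not $-\lambda(1+r-x)\phi'(x)$, so the claimed second-order equation would need to be rederived.) As written the argument has a genuine gap on the interval $(1,4/3]$.

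The paper bypasses the case split entirely by a direct comparison with the $r=1$ operator at the level of the quadratic form. For any odd $f$ with $\|f\|_2=1$ one shows
\[
\langle \mathcal K_r^1 f,f\rangle \;=\; 2\int_0^1\int_{(r-x)\wedge 1}^1 \frac{f(x)f(y)}{\sqrt{H_r(x)H_r(y)}}\,d\nu_1(y)\,d\nu_1(x),
\]
and replacing $f$ by the odd function $\hat f$ equal to $|f|$ on $[0,1]$ only increases the absolute value of this integral. For $r>1$ one has both $[(r-x)\wedge 1,1]\subsetneq[1-x,1]$ and $H_r\ge H_1$ pointwise, hence $\langle \mathcal K_r^1 \hat f,\hat f\rangle<\langle \mathcal K_1^1 \hat f,\hat f\rangle\le \tfrac12$, the last inequality because the top eigenvalue of $\mathcal K_1^1$ on odd functions is exactly $\tfrac12$ (Lemma~\ref{lem:egv}). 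This gives $|\lambda|<\tfrac12$ uniformly over $r\in(1,2)$: the borderline case $r=1$ is absorbed automatically because the bound is made \emph{against} the $r=1$ operator rather than against a fixed constant, so no separate analysis near $r=1$ is needed.
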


\begin{proof}
	Let $\mathcal S_{\tiny \mbox{odd}}=\{ f\in \mathcal S : f \text{ is odd and }\|f\|_2=1\}$, where $\mathcal S$ denotes the space of eigenfunctions of $\K_r^1$. Since $\lambda$ is an eigenvalue with an odd eigenfunction, 
\[
|\lambda|\le \sup \{|\langle \mathcal K_r^1 f,f \rangle| \; :\; {f \in \mathcal S_{\tiny \mbox{odd}}}\}.
\] 
For every $f\in \mathcal S_{\tiny \mbox{odd}}$
	\begin{align*}
	\langle \mathcal K_r^1 f,f \rangle&=\iint K_r^1(x,y)f(x)f(y)d\nu_1(x)d\nu_1(y)
	\\&=\int_{0}^1\int_{(x-r)\vee (-1)}^1\frac{f(x)f(y)}{\sqrt{H_r(x)H_r(y)}}d\nu_1(x)d\nu_1(y)
	\\&\qquad +\int_{-1}^0\int_{-1}^{(x+r)\wedge 1}\frac{f(x)f(y)}{\sqrt{H_r(x)H_r(y)}}d\nu_1(x)d\nu_1(y)
	\\&=2\int_{0}^1\int_{(x-r)\vee (-1)}^1\frac{f(x)f(y)}{\sqrt{H_r(x)H_r(y)}}d\nu_1(x)d\nu_1(y)\,,
	\end{align*}
where for the last equality we used a change of variables and the fact that $H_r$ is even and $f$ is odd.
Since for $f\in \mathcal S_{\tiny \mbox{odd}}$, we also have
\[
	\int_{(x-r)\vee (-1)}^{(r-x)\wedge 1}\frac{f(y)}{\sqrt{H_r(y)}}d\nu_1(y)=0\,,
\]
we conclude that 
	\[
	\langle \mathcal K_r^1 f,f \rangle=2\int_{0}^1\int_{(r-x)\wedge 1}^1\frac{f(x)f(y)}{\sqrt{H_r(x)H_r(y)}}d\nu_1(x)d\nu_1(y).
	\]
	Next, denote $\mathcal S_{\mbox{\tiny odd}}^+=\{f\in \mathcal S_{\tiny \mbox{odd}} ~:~ f(x)\ge 0 \mbox{ for } x\in [0,1]\}$. We claim that 
	\begin{align*}
	\sup_{f\in \mathcal S_{\tiny \mbox{odd}}}	|\langle \mathcal K_r^1 f,f \rangle|=\sup_{f\in \mathcal S_{\mbox{\tiny odd}}^+}\langle \mathcal K_r^1 f,f \rangle.
	\end{align*}
	Indeed, since $\mathcal S_{\mbox{\tiny odd}}^+\subset \mathcal S_{\mbox{\tiny odd}}$ the inequality $\geq $ holds trivially. As for the other direction, given $f\in \mathcal S_{\mbox{\tiny odd}}$ define $\hat f\in \mathcal S_{\mbox{\tiny odd}}^+$ by $\hat f(x)=|f(x)|$  for $x\in [0,1]$ and $\hat f(x)=-|f(x)|$ for $x\in [-1,0]$. Note that 
	\begin{align*}
	|\langle\mathcal K^1_r f,f\rangle| &= \l|2\int\limits_{0}^1\int\limits_{(r-x)\wedge 1}^1\frac{f(x)f(y)}{\sqrt{H_r(x)H_r(y)}}d\nu_1(x)d\nu_1(y)\r|\\
	& \le 2\int\limits_{0}^1\int\limits_{(r-x)\wedge 1}^1\frac{\hat f(x)\hat f(y)}{\sqrt{H_r(x)H_r(y)}}d\nu_1(x)d\nu_1(y)
	\\
	&=\langle \mathcal{K}_r \hat f,\hat f\rangle\,,
	\end{align*}
	where we used the fact that $1<r<2$, and hence that $r-x\geq 0$ for all $x\in [0,1]$.

Finally, note that, for $1<r<2$, we know that $[(r-x)\wedge 1,1]\subset [1-x,1]$ and also that, by \eqref{eqn:H_r_12}, $H_r(x)\geq H_1(x)$. Hence, for $f\in S_{\mbox{\tiny odd}}^+$
	\begin{equation}
	\begin{aligned}
	\langle	\mathcal{K}_r^1 f,f\rangle &=2\int_{0}^1\int_{(r-x)\wedge 1}^1\frac{f(x)f(y)}{\sqrt{H_r(x)H_r(y)}}d\nu_1(x)d\nu_1(y)\\
	& <2 \int_{0}^1\int_{1-x}^1\frac{f(x)f(y)}{\sqrt{H_1(x)H_1(y)}}d\nu_1(x)d\nu_1(y)\\
	&=\langle \mathcal{K}_1^1 f,f\rangle
	\\
	&\leq \smallhalf\,,
	\end{aligned}
	\end{equation}
where in the inequality we used Lemma \ref{lem:egv}. 
\end{proof}

\begin{lemma}\label{lem:even}
		Let $1<r<2$, and $\lambda$ be an eigenvalue $\mathcal K_r^1$ with  even eigenfunction $f$. If $f$ is orthogonal to the eigenfunction $\sqrt{H_r}$ (see Lemma \ref{lem:largestev}), then $|\lambda|< \smallhalf$. In particular, the multiplicity of the eigenvalue $1$ is one. 
\end{lemma}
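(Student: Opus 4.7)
The plan is to follow the strategy of Lemma~\ref{lem:odd}, combining a variational characterization with a reduction to the $r=1$ case handled in Lemma~\ref{lem:egv}. By the variational principle,
\[
|\lambda|\ \le\ \sup\bigl\{|\langle \mathcal K_r^1 g,g\rangle|\ :\ g\text{ even},\ \|g\|_2=1,\ g\perp \sqrt{H_r}\bigr\}.
\]
For even $g$, the parity of $g$ and $H_r$ together with the sign decomposition of $h_r$, and the fact that for $r>1$ the indicator $\mathbf{1}_{|u-v|\le r}$ equals $1$ identically on $[0,1]^2$, yields the identity
\[
\langle \mathcal K_r^1 g,g\rangle \ =\ \tfrac12\Bigl(\int_0^1 \tilde g\,du\Bigr)^{\!2}+\tfrac12\iint_{\{u+v\le r\}\cap[0,1]^2}\tilde g(u)\tilde g(v)\,du\,dv,
\]
where $\tilde g=g/\sqrt{H_r}$; the constraints become $\int_0^1 \tilde g\,H_r\,du=0$ and $\int_0^1 \tilde g^2 H_r\,du=1$.

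First I would bound the rank-one `constant' piece by exploiting orthogonality: since $\int \tilde g\,H_r\,du=0$, for any scalar $c$ one has $\int_0^1 \tilde g\,du=\int_0^1 \tilde g\,(H_r^{-1}-c)\,H_r\,du$, and Cauchy--Schwarz in $L^2(H_r\,du)$ together with optimizing over $c$ yields $(\int \tilde g\,du)^2 \le \int H_r^{-1}\,du-(\int H_r\,du)^{-1}$. Using \eqref{eqn:H_r_12} this evaluates to $(r-1)+2\ln(2/r)-4/(r(4-r))$, a quantity bounded above uniformly by some $\rho<0.06$ on $r\in(1,2)$.

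For the interaction integral I would mimic the symmetrization step of Lemma~\ref{lem:odd}: replace $g$ on $[0,1]$ by a sign-adjusted $\hat g$ (preserving orthogonality) so that $|\langle \mathcal K_r^1 g,g\rangle|\le |\langle \mathcal K_r^1 \hat g,\hat g\rangle|$, then use $H_r\ge H_1$ on $[0,1]$ together with the inclusion $\{u+v\le 1\}\subset\{u+v\le r\}$ to dominate the bilinear form by $\langle \mathcal K_1^1 \hat g,\hat g\rangle$ plus an explicit remainder on the narrow annulus $\{1<u+v\le r\}\cap[0,1]^2$. Lemma~\ref{lem:egv} bounds $|\langle \mathcal K_1^1 \hat g,\hat g\rangle|<0.3\,\|\hat g\|_2^2$ provided $\hat g\perp\sqrt{H_1}$, and Cauchy--Schwarz on the small annulus controls the remainder. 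Combined with the rank-one bound, this delivers $|\lambda|<\tfrac12$.

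The main obstacle is the mismatch between the orthogonality conditions $g\perp\sqrt{H_r}$ and $\hat g\perp\sqrt{H_1}$ required by Lemma~\ref{lem:egv}: since $\sqrt{H_r}\ne\sqrt{H_1}$ for $r\ne 1$, one must decompose $\hat g=c\sqrt{H_1}+\hat g^{\perp_1}$ and verify via a second Cauchy--Schwarz estimate that $c^2$ is of the same $O(\rho)$ order as the rank-one bound, so its contribution is absorbed into the final estimate. The `in particular' statement then follows from Lemma~\ref{lem:eigenfunction}: any eigenfunction with eigenvalue $1$ not proportional to $\sqrt{H_r}$ may be taken orthogonal to $\sqrt{H_r}$, and then its even and odd parts---themselves eigenfunctions with eigenvalue $1$---must vanish by the main assertion and by Lemma~\ref{lem:odd}, respectively.
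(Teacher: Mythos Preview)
Your variational framework and the decomposition
\[
\langle \mathcal K_r^1 g,g\rangle \ =\ \tfrac12\Bigl(\int_0^1 \tilde g\,du\Bigr)^{\!2}+\tfrac12\iint_{\{u+v\le r\}\cap[0,1]^2}\tilde g(u)\tilde g(v)\,du\,dv
\]
are correct, and the Cauchy--Schwarz bound on the rank-one piece is fine. The gap is in the symmetrization step. Taking $\hat g=|g|$ on $[0,1]$ does \emph{not} preserve the orthogonality $g\perp\sqrt{H_r}$: since $\sqrt{H_r}>0$, one has $\langle |g|,\sqrt{H_r}\rangle>0$ whenever $g\not\equiv 0$. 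Your ``main obstacle'' paragraph assumes $\hat g\perp\sqrt{H_r}$ and only worries about the discrepancy between $\sqrt{H_r}$ and $\sqrt{H_1}$, but after taking absolute values you have lost both. Consequently the coefficient $c=\langle \hat g,\sqrt{H_1}\rangle/\|\sqrt{H_1}\|^2$ is generically of order one, not $O(\sqrt\rho)$, and the term $c^2\|\sqrt{H_1}\|^2$ in $\langle \mathcal K_1^1 \hat g,\hat g\rangle$ can be as large as $3/4$. The annulus remainder is also not small enough: a crude Cauchy--Schwarz on $\{1<u+v\le r\}$ gives a contribution of order $(r-1)/r$, which is close to $1/2$ when $r$ is near $2$. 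So even granting the other steps, the pieces do not sum to something below $1/2$ uniformly in $r\in(1,2)$.

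The paper avoids all of this by a pointwise maximum-principle argument rather than a variational one. Passing to $g=f/\sqrt{H_r}$ via Lemma~\ref{lem:sameeigenvalue}, the eigenvalue equation on $[0,1]$ reads
\[
2\lambda H_r(x)g(x)=\int_0^{(r-x)\wedge 1}g(y)\,dy+\int_0^1 g(y)\,dy,
\]
and the orthogonality becomes $\int_0^1 g(y)H_r(y)\,dy=0$. The key trick is to add $\alpha\int_0^1 gH_r\,dy=0$ with a well-chosen $\alpha$ (namely $\alpha=-2$ or $\alpha=-1$) so that the integrand vanishes on $[0,r-1]$; what remains is an integral of $g$ against an explicit weight of size $O(2-r)$. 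Evaluating at a point $x_0$ where $|g|$ attains its supremum (using continuity from Lemma~\ref{lem:continuous}) and splitting into the cases $x_0\in[r-1,1]$ and $x_0\in[0,r-1]$ yields $|\lambda|\le(2-r)/2$ and $|\lambda|\le(2-r)^2/4$ respectively, both strictly below $1/2$. This is considerably cleaner than trying to compare the quadratic forms for different values of $r$.
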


\begin{proof}
	Let $f$ be an even eigenfunction with eigenvalue $\lambda$, which is orthogonal to $\sqrt{H_r}$ and define $g=f/\sqrt{H_r}$. By Claim \ref{lem:sameeigenvalue}, the function $g$ is an eigenfunction with eigenvalue $\lambda$ of $\K'_r$, and therefore, for $0\le x\le 1$, 
	\begin{align}\label{eqn:ev1}
	\lambda H_r(x)g(x)=\int\limits_{(x-r)\vee (-1)}^{1}g(y)d\nu_1(y)=\int\limits_{0}^{(r-x)\wedge 1}g(y)d\nu_1(y)+\int\limits_0^1 g(y)d\nu_1(y).
	\end{align}
	From the assumption that $f$ is orthogonal to $\sqrt{H_r}$ also know that 
	\begin{align}
	\int_{-1}^1 g(y)H_r(y)d\nu_1(y)=0.
	\end{align}
	Together with the assumption that $f$ is even and the fact that $H_r$ is even, it follows that
	\begin{align}\label{eqn:ev2}
		\int_0^1g(y)H_r(y)d\nu_1(y)=0\,.
	\end{align}

Define $\|f\|_\infty:=\sup\{|f(x)|~:~ 0\le x\le 1\}$. Lemma \ref{lem:continuous} implies that $f$ is continuous, and hence $g$ is a continuous, non-trivial, eigenfunction. Therefore, we can find $ x_0\in [0,1]$ such that $|g(x_0)|=\|g\|_\infty>0$.  Without loss of generality we assume that $g(x_0)>0$. The rest of the argument depends on the location of $x_0$. 
	
	\vspace{.2cm}
	\noindent{\bf Case 1 ($r-1\le x_0\le 1$).} 
	For $x\in [r-1,1]$,  $r-1\le r-x \le 1$. Using  \eqref{eqn:ev1} and  \eqref{eqn:ev2} we have that, 
	for all  $\alpha \in \R$,
	\begin{align*}
	2\lambda H_r(x)g(x)&=\int_0^{r-x}g(y)dy+\int_0^1g(y)dy
	\\&=\int_0^{r-x}g(y)dy+\int_0^1g(y)dy+\alpha \int_0^1g(y)H_r(y)dy\\
	&=\int_0^{r-1}g(y)(2+\alpha H_r(y))dy+\int_{r-1}^{r-x}g(y)(2+\alpha H_r(y))dy+\int_{r-x}^1 g(y)(1+\alpha H_r(y))dy.
	\end{align*}
	Choosing $\alpha=-2$ and using \eqref{eqn:H_r_12}, we obtain that $2+\alpha H_r(y)=0$ for $0\le y\le r-1$. Consequently, applying \eqref{eqn:H_r_12} again, for $x\in [r-1, 1]$,
	\begin{align}\label{eqn:eve3}
	2\lambda H_r(x)g(x)&=\int_{r-1}^{r-x}g(y)(2-2 H_r(y))dy+\int_{r-x}^1g(y)(1-2 H_r(y))dy\nonumber
	\\&=\int_{r-1}^{r-x}g(y)(2-(r+1-y))dy+\int_{r-x}^1g(y)(1-(r+1-y))dy\nonumber
	\\&=\int_{r-1}^{r-x}g(y)(y-(r-1))dy+\int_{r-x}^1g(y)(-r+y))dy\nonumber
	\\&\le \|g\|_\infty\int_{r-1}^{r-x} (y-(r-1))dy+\|g\|_\infty\int_{r-x}^1(r-y))dy\nonumber
	\\&=\frac{\|g\|_\infty}{2}(1-2x(1-x)-(r-1)^2)\le \frac{r}{2}(2-r)\|g\|_\infty,
	\end{align}
Taking $x=x_0$ in \eqref{eqn:eve3} gives
	\begin{align*}
	|\lambda||(r+1-x_0)|\|g\|_\infty=|\lambda||(r+1-x_0)||g(x_0)|\le \frac{r}{2}(2-r)\|g\|_\infty ,
	\end{align*}
	and, since $r+1-x_0\geq r$, that 
	\begin{align*}
		|\lambda|\le \frac{2-r}{2}<\smallhalf,
	\end{align*}
	where in the last inequality we used the assumption that $1<r<2$.
	
	\vspace{.2cm}
	\noindent{\bf Case 2 ($0\le x_0\le r-1$).}  
	Let $x\in [0,r-1]$. Then $r-x \ge 1$.
	Using  \eqref{eqn:ev1} and \eqref{eqn:ev2}, for every $\alpha \in \R$	we have
	\begin{align*}
	\lambda H_r(x)g(x)&=\int_0^1 g(y)dy+\alpha \int_0^1g(y)H_r(y)dy\\
	&=\int_0^{r-1}g(y)(1+\alpha H_r(y))dy+\int_{r-1}^1g(y)(1+\alpha H_r(y))dy.
	\end{align*}
	From \eqref{eqn:H_r_12} we have that $H_r(x)=1$ for $x\le r-1$. Thus, taking $\alpha=-1$, we have
	\begin{align*}
	\lambda g(x)&=\int_{r-1}^1g(y)(1- H_r(y))dy\\&=\smallhalf\int_{r-1}^1g(y)(y-(r-1))dy
	\\&\le \frac{\|g\|_\infty}{2}\int_{r-1}^1(y-(r-1))dy
	\\&=\frac{\|g\|_\infty}{4}(2-r)^2.
	\end{align*}
	In particular, for $x=x_0$,
	\begin{align*}
	|\lambda|\cdot \|g\|_\infty =|\lambda|g(x_0)\le \frac{\|g\|_\infty}{4}(2-r)^2 \leq \frac{1}{4}\|g\|_\infty\,,
	\end{align*}
	and hence $|\lambda|\leq \frac{1}{4}$.
\end{proof}

Finally, we can prove Lemma \ref{lem:secondevforL}.
\begin{proof}[Proof of Lemma \ref{lem:secondevforL}]
	Note that Lemma \ref{lem:eigenfunction} implies that the eigenfunctions are generated by only odd and even functions. Thus, combining Lemmas \ref{lem:largestev}, \ref{lem:odd}, and \ref{lem:even}, and the fact that the eigenfunction form an orthonormal basis, the result follows.
\end{proof}

Now that we have estimates  for the spectrum in the one-dimensional case, we can  treat the case of  arbitrary dimension. 

\begin{lemma}\label{lem:k_L}
Let $1<r<2$. Then $\lambda=1$ is an eigenvalue of  $\K_r^d$ with multiplicity $1$, and all other eigenvalues $\lambda$ satisfy $|\lambda| < 1/2$.
\end{lemma}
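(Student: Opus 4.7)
The plan is to mirror the argument used in Lemma~\ref{lem:alleigenvalues}, exploiting the product structure of the kernel together with the one-dimensional spectral estimate just proved in Lemma~\ref{lem:secondevforL}. Since $\K_r^1$ is a self-adjoint compact operator on $L^2(B^1,\nu_1)$, the spectral theorem provides an orthonormal basis $(\varphi_i)_{i\geq 1}$ of eigenfunctions with eigenvalues $(\lambda_i)_{i\ge 1}$ listed in decreasing order of absolute value. By Lemma~\ref{lem:largestev} we have $\lambda_1=1$ with eigenfunction $\sqrt{H_r}$, and by Lemma~\ref{lem:secondevforL} we have $|\lambda_i|<1/2$ for every $i\ge 2$.

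Next, I would use the product identity \eqref{eqn:kprod}, namely $K_r^d(x,y)=\prod_{i=1}^d K_r^1(x_i,y_i)$, together with the fact that the tensor products $\varphi_{i_1,\ldots,i_d}(x):=\varphi_{i_1}(x_1)\cdots\varphi_{i_d}(x_d)$ form an orthonormal basis of $L^2(B^d,\nu_d)$. A direct Fubini computation (exactly as in the proof of Lemma~\ref{lem:alleigenvalues}) yields
\[
\K_r^d\varphi_{i_1,\ldots,i_d}(x)=\lambda_{i_1}\cdots\lambda_{i_d}\,\varphi_{i_1,\ldots,i_d}(x),
\]
so that the full spectrum of $\K_r^d$ (with multiplicities) is exactly $\{\lambda_{i_1}\cdots\lambda_{i_d} : i_1,\ldots,i_d\in\mathbb{N}\}$.

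Finally, I would read off the claim by cases on the multi-index $(i_1,\ldots,i_d)$. If $i_1=\cdots=i_d=1$, then the eigenvalue equals $1^d=1$, giving at least (and by the next case, exactly) one eigenfunction of eigenvalue $1$. If some $i_k\ge 2$, then $|\lambda_{i_k}|<1/2$ while the remaining factors satisfy $|\lambda_{i_j}|\le 1$ (again by Lemma~\ref{lem:secondevforL}, since $\lambda_1=1$ and $|\lambda_j|<1/2$ for $j\ge 2$), whence
\[
|\lambda_{i_1}\cdots\lambda_{i_d}|\;\le\;|\lambda_{i_k}|\;<\;\smallhalf.
\]
This shows that $1$ is a simple eigenvalue of $\K_r^d$ and that all other eigenvalues lie strictly inside $(-1/2,1/2)$.

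There is no real obstacle here: once Lemma~\ref{lem:secondevforL} is in hand, the tensorization argument is routine. The only point worth being careful about is the strict inequality $|\lambda_{i_k}|<1/2$ (not $\le$), which is exactly what Lemma~\ref{lem:secondevforL} delivers and which is what makes the bound for $\K_r^d$ strict as well; this is the essential use of $1<r<2$ (as opposed to $r=1$, where the bound becomes an equality and one recovers the multiplicities from Theorem~\ref{thm:limiteigenvalue}).
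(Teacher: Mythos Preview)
Your proof is correct and follows exactly the paper's approach: tensorize the one-dimensional eigenbasis as in Lemma~\ref{lem:alleigenvalues} and then invoke Lemma~\ref{lem:secondevforL} to conclude. The paper's own proof is a two-line reference to precisely these two ingredients.
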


\begin{proof}Fix $1<r<2$ and denote by $(\lambda_i)_{i\geq 1}$ the eigenvalues of $\K_r^1$ in decreasing order. Repeating the argument in the proof of Lemma  \ref{lem:alleigenvalues}, we see that the eigenvalues of $\K_r^d$ are $(\lambda_{i_1}\cdots \lambda_{i_d})_{i_1,\ldots,i_d\in \N}$. Hence the result follows from Lemma \ref{lem:secondevforL}.
\end{proof}


\subsection{The spectrum of $\mathcal K_r^d$ for  $0<r<1$} In this subsection we estimate the eigenvalues of $\mathcal K_r^d$ for $0<r<1$. In particular, we prove the following result. 

\begin{lemma}\label{lem:r<1}
Fix  $0<r<1$, and let $\mathcal K_r^d$ be as defined above. Then the second largest eigenvalue of $\mathcal K_r^d$ is strictly greater than $1/2$.
\end{lemma}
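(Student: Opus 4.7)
The plan is to apply the Courant--Fischer max-min principle with a carefully chosen two-dimensional subspace, reducing the $d$-dimensional question to a one-dimensional Rayleigh quotient estimate that can be verified by a direct calculation.

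First, I would exploit the product structure $K_r^d(x,y) = \prod_{i=1}^{d} K_r^1(x_i,y_i)$ from \eqref{eqn:kprod} to lift a one-dimensional test function $f \in L^2(B^1,\nu_1)$ to the function $F(x) := f(x_1)\prod_{i=2}^{d}\sqrt{H_r(x_i)}$. Together with the principal eigenfunction $G(x) := \prod_{i=1}^{d}\sqrt{H_r(x_i)}$ of Lemma \ref{lem:largestev}, this defines the subspace $V := \mathrm{span}\{G,F\} \subset L^2(B^d,\nu_d)$. Whenever $f$ is chosen orthogonal to $\sqrt{H_r}$ in $L^2(B^1,\nu_1)$, the tensor structure yields $G \perp F$, $\mathcal{K}_r^d G = G$, $\langle \mathcal{K}_r^d G, F\rangle = 0$, and $\langle \mathcal{K}_r^d F, F\rangle/\|F\|^2 = \langle \mathcal{K}_r^1 f, f\rangle/\|f\|^2$. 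A short calculation on the unit sphere of $V$ shows that the minimum of the Rayleigh quotient over $V$ equals $\min\{1,\langle \mathcal{K}_r^1 f, f\rangle/\|f\|^2\}$, so Courant--Fischer delivers $\lambda_2(\mathcal{K}_r^d) \geq \min\{1,\langle \mathcal{K}_r^1 f, f\rangle/\|f\|^2\}$, independently of any potential multiplicity of the top eigenvalue.

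For the test function, I would take $f(x) := x\sqrt{H_r(x)}$, the natural analogue of the eigenfunction with eigenvalue $1/2$ appearing at $r=1$ in Lemma \ref{lem:egv}. It is odd, hence automatically orthogonal to the even $\sqrt{H_r}$. Using the explicit formula \eqref{eqn:H_r_01} for $H_r$ on $(0,1)$, I would compute $\mathcal{K}_r^1 f$ on the interior strip $|x|\leq 1-r$ and on the boundary strip $1-r \leq |x|\leq 1$ separately. I expect the interior calculation to give $\mathcal{K}_r^1 f(x) = f(x)$ identically (so the ratio $\mathcal{K}_r^1 f(x)/f(x)$ equals $1$ there), while on the boundary strip the ratio will reduce after cancellation to the rational function $(1-r+|x|)/(2|x|)$. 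A one-line derivative check shows this ratio is monotone decreasing on $[1-r,1]$, attaining its minimum $1-r/2$ at $|x|=1$.

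Since $1-r/2 > 1/2$ for every $r\in(0,1)$, multiplying the pointwise inequality $\mathcal{K}_r^1 f(x)/f(x) \geq 1-r/2$ by the nonnegative quantity $f(x)^2$ and integrating yields $\langle \mathcal{K}_r^1 f, f\rangle \geq (1-r/2)\|f\|^2$. Feeding this bound into the variational reduction above gives $\lambda_2(\mathcal{K}_r^d) \geq 1-r/2 > 1/2$, as required. The only substantive step is the piecewise evaluation of $\mathcal{K}_r^1 f$ on the boundary strip together with the monotonicity check; once these are in hand, the tensor-product reduction and the max-min principle do the rest.
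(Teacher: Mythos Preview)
Your proposal is correct and uses the same test function $f(x)=x\sqrt{H_r(x)}$ and the same variational idea as the paper. The paper reduces to $d=1$ by invoking the product form of the eigenvalues directly and then bounds the Rayleigh quotient by computing the two integrals $\langle \mathcal K_r^1 f,f\rangle$ and $\|f\|^2$ explicitly and comparing them; you instead reduce via a two-dimensional Courant--Fischer subspace and then evaluate $\mathcal K_r^1 f$ \emph{pointwise}, obtaining $\mathcal K_r^1 f(x)/f(x)\equiv 1$ on the interior strip and $(1-r+|x|)/(2|x|)$ on the boundary strip. The pointwise route is a genuine (if modest) improvement: the cancellation of $\sqrt{H_r(y)}$ makes the calculation shorter than the paper's double integral, and it delivers the clean quantitative bound $\lambda_2(\mathcal K_r^d)\ge 1-r/2$, which the paper does not state. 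One small point worth making explicit in your write-up: the pointwise ratio inequality is derived for $x>0$; to pass to $\langle \mathcal K_r^1 f,f\rangle\ge(1-r/2)\|f\|^2$ you should note that $\mathcal K_r^1 f$ is odd (which follows from $K_r^1(-x,-y)=K_r^1(x,y)$ and the oddness of $f$), so the same ratio bound holds for $x<0$.
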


\begin{proof}
	Due to the product form of the eigenvalues of $\K_r^d$, it suffices to show that the second largest eigenvalue of $\K_r^1$ is strictly larger than $1/2$ for every $0<r<1$. 	
	
	Fix $0<r<1$ and denote by $\lambda_2$ the second largest eigenvalue of $\mathcal K_r^1$.  Since $\mathcal K_r^1$ is a self-adjoint operator, and $1$ is an eigenvalue with eigenfunction $\sqrt{H_r}$, 
	\begin{align*}
	\lambda_2=\sup_{f\in \mathcal S'} \frac{\langle \mathcal K^1_rf,f\rangle}{\langle f,f\rangle},
	\end{align*}
where $\mathcal S'=\{f\in L^2[-1,1] ~:~ f \mbox{ is orthogonal to }\sqrt{H_r}\}$. Hence, in order to prove the statement, it suffices to find a function $f\in \mathcal S'$ satisfying
	\[
	\frac{\langle \mathcal K_r^1f,f\rangle}{\langle f,f\rangle}>\smallhalf\,.
	\]
	Let $f:[-1,1]\to\mathbb{R}$ be given by $f(x)=x\sqrt{H_r(x)}$ for $x\in [-1,1]$. Since $\sqrt{H_r}$ is even and bounded, it follows that $f\in \mathcal S'$, as $f$ is odd. On  the one hand,  \eqref{eqn:H_r_01} implies that, for $0<r<1$,

	\begin{align}\label{eqn:fnorm}
		\langle f,f\rangle&=\int_{-1}^1|f(x)|^2d\nu_1(x)
		\\
		&=\int_{0}^{1}x^2H_r(x)dx\nonumber\\
		&=r\int_0^{1-r}x^2dx +\smallhalf\int_{1-r}^1x^2(1+r-x)dx.
	\end{align}
	On the other hand, using the definition of the kernel $\K_r^1$,
	\begin{align}\label{eqn:r<1}
	\langle \K_r^1f,f\rangle&=\iint K_r^1(x,y)f(x)f(y)d\nu_1(x)d\nu_1(y)\nonumber
	\\&=\frac{1}{4}\int_{-1}^{1}\int_{-1}^{1}xyh_r(x,y)dxdy\nonumber
	\\&=\smallhalf\int_{0}^{1}\int_{-1}^{1}xyh_r(x,y)dxdy\,.
	\end{align}
Recalling that $h_r(x,y)=\one_{|x-y|\le r}$ gives 
\begin{align}\label{eqn:2r<1}
	\int_{0}^{1}\int_{-1}^{1}xyh_r(x,y)dxdy&=\int_{0}^{1-r}\int_{x-r}^{x+r}xydydx+\int_{1-r}^1\int_{x-r}^1xydxdy\nonumber
	\\&=2r\int_{0}^{1-r}x^2dx+\smallhalf\int_{1-r}^1x(1-x^2+2xr-r^2)dx\nonumber
	\\&=\langle f,f\rangle+r\int_{0}^{1-r}x^2dx+\frac{(1-r)}{2}\int_{1-r}^1(1+r-x)dx\nonumber
	\\&\ge \langle f,f\rangle+\frac{r(1-r)^3}{3}+\frac{(1-r)r^2}{2}\,,
\end{align}
where the last equality follows from \eqref{eqn:fnorm}, and the last inequality follows from the fact that $1+r-x\ge r$. 
Combining \eqref{eqn:r<1} and \eqref{eqn:2r<1}, we conclude that for $0<r<1$
\[
\langle \K_r^1f,f\rangle > \smallhalf\langle f,f\rangle\,,
\]
as required.

\end{proof}

The following lemma will be used in the proof of  Corollary \ref{cor}.
\begin{lemma}\label{lem:lessthan1}
	Fix  $0<r<2$, and let $\mathcal K_r^d$ be as defined above.  Suppose $\lambda_2$ is the second largest eigenvalue of $\mathcal K_r^d$. Then   $0<\lambda_2< 1$.
\end{lemma}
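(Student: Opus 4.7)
The plan is to prove the two inequalities $\lambda_2<1$ and $\lambda_2>0$ separately, both via the tensor-product identity $K_r^d(x,y)=\prod_{i=1}^d K_r^1(x_i,y_i)$ established in the proof of Lemma \ref{lem:alleigenvalues}: this identity identifies $\spec(\K_r^d)$ with the multiset of all products $\lambda_{i_1}\cdots\lambda_{i_d}$ where $(\lambda_i)_{i\ge 1}$ enumerates $\spec(\K_r^1)$.

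For $\lambda_2<1$, I would show that $\spec(\K_r^1)\subset(-1,1]$ with $1$ of multiplicity one, which via the product description forces $1$ to be a simple eigenvalue of $\K_r^d$ as well. The starting point is the substitution $g=f/\sqrt{H_r}$, legitimate because $H_r\ge r/2>0$ by \eqref{eqn:H_r_01}--\eqref{eqn:H_r_12}. A direct computation then yields
\[
\langle \K_r^1 f, f\rangle = \iint_{B^1\times B^1} h_r(x,y)\, g(x)g(y)\, d\nu_1(x)\,d\nu_1(y),
\]
together with $\|f\|_2^2 = \tfrac12\iint h_r(x,y)(g(x)^2+g(y)^2)\, d\nu_1(x)\,d\nu_1(y)$, and hence
\[
\|f\|_2^2 \mp \langle \K_r^1 f, f\rangle = \tfrac12\iint h_r(x,y)\bigl(g(x)\pm g(y)\bigr)^2\, d\nu_1(x)\,d\nu_1(y) \ge 0,
\]
so $|\lambda|\le 1$. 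In the saturating case $\lambda=1$, we get $g(x)=g(y)$ for a.e.\ pair with $h_r(x,y)=1$; Lemma \ref{lem:continuous} makes $f$, and hence $g$, continuous on $[-1,1]$, and since any two points of $[-1,1]$ can be joined by a finite $r$-chain, $g$ is constant, so $f\in\mathrm{span}\{\sqrt{H_r}\}$. In the case $\lambda=-1$, saturation forces $g(y)=-g(x)$ for every $y$ with $|x-y|\le r$; taking the continuous limit $y\to x$ inside that interval gives $g(x)=-g(x)=0$, hence $g\equiv 0$, a contradiction.

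For $\lambda_2>0$, I plan to use the trial function $f(x)=x\sqrt{H_r(x)}\in L^2(B^1,\nu_1)$, which is odd and therefore orthogonal to the $\lambda_1$-eigenfunction $\sqrt{H_r}$ by Lemma \ref{lem:eigenfunction}. Using $\iint xy\,d\nu_1\,d\nu_1=0$, the Rayleigh numerator collapses to
\[
\langle \K_r^1 f, f\rangle = \iint h_r(x,y)\,xy\,d\nu_1(x)\,d\nu_1(y) = -\iint_{\{|x-y|>r\}} xy\,d\nu_1(x)\,d\nu_1(y).
\]
For $0<r<1$ the strict positivity of this quantity is already contained in the computation in Lemma \ref{lem:r<1}. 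For $1\le r<2$ the set $\{|x-y|>r\}\cap[-1,1]^2$ is the disjoint union of the two triangles $\{y>x+r\}$ and $\{x>y+r\}$, and on the first one $x\le 1-r\le 0$ while $y\ge r-1\ge 0$ (symmetrically on the second), so $xy\le 0$ throughout the region and $xy<0$ on a set of positive measure. The integral is therefore strictly negative, whence $\langle \K_r^1 f, f\rangle>0$ and the variational principle gives $\lambda_2(\K_r^1)>0$. The product eigenvalue $\lambda_2(\K_r^1)\cdot 1^{d-1}$ of $\K_r^d$ then lies in $(0,1)$, proving $\lambda_2(\K_r^d)\ge\lambda_2(\K_r^1)>0$.

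The subtlest point is the exclusion $\lambda\ne-1$: the equality condition $g(y)=-g(x)$ is only an a.e.\ statement a priori, and the argument only closes because Lemma \ref{lem:continuous} provides genuine pointwise continuity of non-zero-eigenvalue eigenfunctions on the compact interval $[-1,1]$, after which a local limit finishes the contradiction. Without this, one could only exclude $-1$ as an eigenvalue in a weak sense, and the multiplicity count for the eigenvalue $1$ of $\K_r^d$ (where pairs of factors equal to $-1$ would contribute) would not close.
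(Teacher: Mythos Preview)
Your proof is correct. For the inequality $\lambda_2>0$ it essentially matches the paper's argument (same trial function $f(x)=x\sqrt{H_r(x)}$, same variational principle), though you carry out the positivity check in the range $1\le r<2$ explicitly whereas the paper leaves it as a one-line remark.

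For $\lambda_2<1$, however, your route is genuinely different. The paper argues via a maximum principle: writing $f=g\sqrt{H_r}$, it locates a point $x_0$ where $|g|$ attains its supremum and where $g$ is not locally constant, and then extracts a strict inequality directly from the averaged eigenvalue equation $\lambda g(x_0)=H_r(x_0)^{-1}\int h_r(x_0,y)g(y)\,d\nu_1(y)$. Your quadratic-form identity $\|f\|_2^2\pm\langle\K_r^1 f,f\rangle=\tfrac12\iint h_r(x,y)(g(x)\pm g(y))^2\,d\nu_1\,d\nu_1$ (note: the $\mp$ in your display should read $\pm$, matching the sign on the right; your subsequent case analysis shows you have the correct identity in mind) is more structural: it gives $\spec(\K_r^1)\subset[-1,1]$ at once, and the rigidity for $\lambda=\pm 1$ then follows from connectivity of $[-1,1]$ under $r$-steps combined with continuity. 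Both approaches lean on Lemma~\ref{lem:continuous} at the decisive moment. Yours is the standard Dirichlet-form argument for Markov-type operators and is arguably cleaner and more portable; the paper's maximum-principle argument is closer in spirit to the techniques used elsewhere in Section~\ref{sec:eigenvalue} (e.g.\ Lemma~\ref{lem:even}), and has the minor advantage of handling $\lambda=1$ and $\lambda=-1$ in a single stroke without a separate $y\to x$ limit.
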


\begin{proof}
		We first show that  $\lambda_2<1$. Due to the product form of the eigenvalues of $\K_r^d$,  it is enough to show that the result holds for $d=1$. 
		
		Let $\lambda $ be an eigenvalue of $\K_r^1$ with corresponding eigenfunction $f$, where $f$ is orthogonal to $\sqrt{H_r}$. It suffices  to show that $|\lambda|<1$, as eigenfunctions of the self-adjoint operator $\K_r^1$ are othogonal and  $\sqrt{H_r}$ is the eigenfunction for $\lambda=1$,  by Lemma \ref{lem:largestev}. Without loss of generality we assume that $f(x)=g(x)\sqrt{H_r(x)}$.
	  Since $\lambda$ is an eigenvalue of $\K_r^1$ with eigenfunction $f$, we have
	 \begin{align*}
	 	\lambda f(x)=\int_{-1}^1 K_r^1(x,y)f(y)d\nu_1(y).
	 \end{align*}
	 Consequently, as $H_r(x)>0$, we have that 
	 \begin{align}\label{eqn:glambda}
	 	\lambda \, g(x)=\frac{1}{H_r(x)}\int_{-1}^1 h_r(x,y)g(y)d\nu_1(y), \ \ \mbox{ for $x\in [-1,1]$}.
	 \end{align}
Since $f$ is orthogonal to $\sqrt{H_r}$, we also have that
	 \begin{align}\label{eqn:orthogonal}
	 \langle f, \sqrt{H_r}\rangle=\int_{-1}^1 g(x)H_r(x)d\nu_1(x)=0.
	 \end{align}
	  Observe that \eqref{eqn:orthogonal} implies that $g$ is a non-constant function in $[-1,1]$.  Also note that $g$ is continuous, as $f$ and $\sqrt{H_r}$ are  continuous by  Lemma \ref{lem:continuous} and \eqref{eqn:H_r_01}. Therefore, there exists  $x_0\in [-1,1]$ such that $g$ is not constant in the interval $[-1\vee (x_0-r), (x_0+r)\wedge 1]$ and $|g(x_0)|=\|g\|_{\infty}$, where $\|g\|_{\infty}:=\sup\{|g(x)|: -1\le x\le 1\}$. Thus, for $x=x_0$, from \eqref{eqn:glambda} we have that
	 \begin{align*}
	 	|\lambda| |g(x_0)|\le \frac{1}{H_r(x_0)}\int_{-1}^1 h_r(x_0,y)|g(y)|d\nu_1(y)<\frac{\|g\|_{\infty}}{H_r(x_0)}\int_{-1}^1 h_r(x_0,y)d\nu_1(y).
	 \end{align*}
	  The strict inequality in the last equation follows from the fact that $g$ is not  constant in the interval $[-1\vee (x_0-r), (x_0+r)\wedge 1]$.  Therefore 
	 \begin{align*}
	 |\lambda|<\frac{\|g\|_{\infty}}{|g(x_0)|}=1.
	 \end{align*}
	
	 Next, we show that $\lambda_2>0$. As before, it is enough to show the result holds for $d=1$. Since $\K_r^1$ is a self-adjoint operator and $\sqrt{H_r}$ is  the eigenfunction for the largest eigenvalue $1$,  we have
	 \begin{align*}
	 	\lambda_2=\sup_{f\in \mathcal S'}\frac{\langle \K_r^1f, f\rangle}{\langle f, f\rangle},
	 \end{align*}
	 where  $\mathcal S'=\{f\in L^2[-1,1] : \langle f, \sqrt{H_r}\rangle=0 \}$. 
	 Taking $f(x)=x\sqrt{H_r(x)}$, for $x\in [-1,1]$, then $\langle f, \sqrt{H_r}\rangle = 0$, and 
	 \begin{align*}
	 	\lambda_2\ge \frac{\langle \K_r^1f, f\rangle}{\langle f, f\rangle}>0.
	 \end{align*}
	 Hence the result.
\end{proof}
	\section{Construction of kernels for $d=1$}\label{sec:d1}	\label{sec:kernel_d1}
	
	In this section and those to follow we construct, for each $n$, a kernel whose spectrum is the same as the spectrum of the symmetrically normalized adjacency operator of the random geometric graph $G(n,r)$, and show that they converge in the cut norm to the limiting integral operators $\K_r^d$ of the previous section. This section is devoted to the proof in the case $d=1$ and the following sections are dedicated to the cases $d=2$ and general $d$. The main reason for this partition is pedagogical, as we wish to present the proofs in an incremental level of difficulty, allowing each step to rely on the preceding ones. 
	
	Throughout this section fix $0<r<2$. We start by defining a partition of $[-1,1]$ into subintervals. For $n\geq 1$, define $(L_i^n)_{i=1}^n$ by 
	\begin{equation}\label{eqn:L_i}
	\begin{split}
	L_i^n&= \l[-1+\frac{2(i-1)}{n}, -1+\frac{2i}{n}\r),\qquad 1\leq i\leq n-1,\\
	L_n^n &= \l[1-\frac{2}{n}, 1\r],
	\end{split}
	\end{equation}
	so that  the $L_i^n$ are disjoint intervals, with $\nu_1(L_i^n) = 1/n$ for all $i$. For brevity, throughout this section, we write $L_i$ for $L_i^n$.
	
	Let $X_1,\ldots, X_n$ be a sequence of i.i.d.~uniformly distributed random variables in $B^1 = [-1,1]$, and let $X^{(1)},\ldots, X^{(n)}$ be their order statistics, i.e., $X^{(1)}\le \cdots\le X^{(n)}$.  For $n\geq 1$, define  the random functions $h_{n,r}  :  B^1\times B^1\to \R$, as
	\begin{align*}
	h_{n,r}(x,y)=\sum_{i,j=1}^{n}h_r(X^{(i)},X^{(j)})\one_{L_i}(x)\one_{L_j}(y).
	\end{align*}
	
	Next, define a sequence of random kernels $K_{n,r  }:B^1\times B^1\to \R$ by
	\begin{align*}
	K_{n,r}(x,y)=\frac{h_{n,r}(x,y)}{\sqrt{H_{n,r}(x)H_{n,r}(y)}},
	\end{align*}
	where 
	\begin{equation}\label{eqn:H_n}
	H_{n,r}(x) := \int_{B^1} h_{n,r}(x,u)d\nu_1(u) = \frac{1}{n}\sum_{i=1}^n \one_{L_i}(x)\sum_{p=1}^n h_r  (X^{(i)}, X^{(p)}).
	\end{equation}
	Hence,
	\begin{equation}\label{eqn:K_n_formula}
	K_{n,r}(x,y)=\sum_{i,j=1}^{n}\frac{nh_r(X^{(i)},X^{(j)})\one_{L_i}(x)\one_{L_j}(y)}{\sqrt{\sum_{p,q=1}^{n}h_r(X^{(i)},X^{(p)})h_r(X^{(j)}, X^{(q)})}}.
	\end{equation}
	Let $\mathcal K_{n,r} : L^2(B^1,\nu_1)\to L^2(B^1,\nu_1)$ be the Hilbert-Schmidt kernel operator for the kernel $K_{n,r}$, i.e.
	\begin{equation}\label{eqn:K_n}
	\mathcal K_{n,r}f(x)=\int_{B^1} K_{n,r}(x,y)f(y)d\nu_1(y).
	\end{equation}
	Note that $\mathcal K_{n,r}$ is a {\it random} operator, as $K_{n,r}$ is a random function. As mentioned before, the goal of this section is to prove: (a) the operator $\mathcal  K_{n,r}$ has the same spectrum as the operator $W_{n,r}$, and (b) almost surely, $K_{n,r} \to K_r^1$ in the cut-norm as $n\to\infty$.
	
	\subsection{The spectrum of $\mathcal K_{n,r}$}
	
	We start by showing that $\mathcal K_{n,r}$ and $W_{n,r}$ have the same spectrum.
	\begin{lemma}\label{lem:equaleigenvalues}
		Let $d=1$ and $0<r<2$. For $n\geq 1$, let $\mathcal K_{n,r}$ be as defined in \eqref{eqn:K_n} and $W_{n,r}$ as defined in \eqref{eqn:W_n}. Then 
		$$\spec(\K_{n,r}) = \spec(W_{n,r}).$$
	\end{lemma}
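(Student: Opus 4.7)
The plan is to exploit the fact that $K_{n,r}$ is a piecewise-constant kernel on the partition $(L_i^n)_{i=1}^n$ of $B^1$, so the associated Hilbert--Schmidt operator is of finite rank and its nonzero spectrum can be read off from a single $n\times n$ matrix, which will turn out to be a permutation conjugate of $W_{n,r}$.

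First, I would introduce the $n$-dimensional subspace $V_n\subset L^2(B^1,\nu_1)$ consisting of functions that are constant on each $L_i^n$, with the natural orthonormal basis $e_i := \sqrt{n}\,\one_{L_i^n}$, $1\le i\le n$ (using that $\nu_1(L_i^n)=1/n$). Because $K_{n,r}(x,y)$ is a step function in the variable $x$, the image of $\K_{n,r}$ is contained in $V_n$; and because $K_{n,r}(x,y)$ is also a step function in $y$, the value of $\K_{n,r}f$ depends on $f$ only through the integrals $\int_{L_j^n} f\,d\nu_1 = n^{-1/2}\langle f, e_j\rangle$. In particular, $V_n^\perp\subset\ker \K_{n,r}$. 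Consequently, every eigenfunction corresponding to a nonzero eigenvalue lies in $V_n$, and all nonzero eigenvalues of $\K_{n,r}$ coincide with the nonzero eigenvalues of the $n\times n$ matrix $M$ of $\K_{n,r}\!\restriction_{V_n}$ in the basis $\{e_i\}$.

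Next, I would compute this matrix explicitly. A direct calculation, using $\int \one_{L_{j'}}\one_{L_j}\,d\nu_1 = \delta_{jj'}/n$ and formula \eqref{eqn:K_n_formula}, yields
\[
\langle \K_{n,r} e_j, e_i\rangle \;=\; \frac{h_r(X^{(i)}, X^{(j)})}{\sqrt{\sum_p h_r(X^{(i)},X^{(p)})\,\sum_q h_r(X^{(j)},X^{(q)})}}.
\]
Let $\sigma\in S_n$ be the (a.s.\ unique) permutation with $X^{(i)}=X_{\sigma(i)}$. Then the denominator equals $\sqrt{d_{\sigma(i)}\,d_{\sigma(j)}}$, where $d_k=\sum_\ell h_r(X_k,X_\ell)$ is the $k$-th diagonal entry of $D_{n,r}$, and the numerator equals $(A_{n,r})_{\sigma(i),\sigma(j)}$. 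Therefore $M_{ij} = (W_{n,r})_{\sigma(i),\sigma(j)}$, i.e.\ $M = P\, W_{n,r}\, P^\top$ for the permutation matrix $P$ associated to $\sigma$. Hence $M$ and $W_{n,r}$ have identical spectra (including multiplicities of zero).

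Finally, I would conclude by combining the two observations: the nonzero spectrum of $\K_{n,r}$ equals the nonzero spectrum of $M$, which in turn equals that of $W_{n,r}$; and the multiplicity of the eigenvalue $0$ matches as well, since both $W_{n,r}$ (as an $n\times n$ matrix) and $\K_{n,r}|_{V_n}$ have the same trace of $M$ (hence the same $n$ eigenvalues counted with multiplicity), while the infinite-dimensional kernel of $\K_{n,r}$ on $V_n^\perp$ is irrelevant to the convention of $\spec(\cdot)$ used later via Lemma~\ref{lem:szegedy}, which only probes the set $\spec(\K)\setminus\{0\}$. There is no substantive obstacle here: the main step is bookkeeping with order statistics and verifying that the self-loop convention ($h_r(x,x)=1$) makes the sum $\sum_p h_r(X^{(i)},X^{(p)})$ precisely the $i$-th row-sum of $A_{n,r}$ after relabeling.
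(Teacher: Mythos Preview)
Your proposal is correct and follows essentially the same approach as the paper: both identify the finite-rank structure of $\K_{n,r}$ on the subspace of functions that are piecewise constant on the $L_i^n$, and then match the resulting $n\times n$ matrix with $W_{n,r}$ up to the permutation coming from the order statistics. You are in fact slightly more careful than the paper about the eigenvalue $0$, correctly noting that the infinite-dimensional kernel on $V_n^\perp$ is irrelevant for the later application via Lemma~\ref{lem:szegedy}.
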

	
	\begin{proof}
		Let $\widetilde A_{n,r}$ and $\widetilde W_{n,r}$ be the adjacency and symmetrically normalized adjacency matrices for the vertex set $X^{(1)},\ldots, X^{(n)}$. Since we only changed the order of the vertices,  $\spec(\widetilde W_{n,r}) = \spec(W_{n,r})$.
		Abbreviate $a_{i,j} = (\widetilde A_{n,r})_{i,j} = h_r(X^{(i)},X^{(j)})$, and $d_i = \sum_{j=1}^n a_{i,j}$. Then the $(i,j)$-th entry of $\widetilde W_{n,r}$ is given by 
		\begin{align*}
		w_{i,j} := \frac{a_{i,j}}{\sqrt{d_id_j}}.
		\end{align*}
		Using this notation with \eqref{eqn:K_n_formula}, we can also write
		\[
		K_{n,r}(x,y)= n\sum_{i,j=1}^{n}w_{i,j}\one_{L_i}(x)\one_{L_j}(y).
		\]
		Let $f$ be an eigenfunction of $\K_{n,r}$ with eigenvalue $\lambda$. Then, for every $x\in B^1$,
		\[
		\lambda f(x) = n\sum_{i=1}^n \left(\sum_{j=1}^n w_{i,j}\langle f, \one_{L_j}\rangle \right) \one_{L_i}(x).
		\]
		In other words, $f$ must be piecewise constant on the intervals $L_i$, and we can write
		\[
		f(x) = \sum_{i=1}^n c_i \one_{L_i}(x),
		\]
		for some values $c_i$. Hence
		\[
		\K_{n,r} f(x) = \sum_{i,j=1}^n  w_{i,j} c_j \one_{L_i}(x).
		\]
		Therefore, $f(x)$ is an eigenfunction of $\K_{n,r}$ with eigenvalue $\lambda$ if, and only if,
		\[
		\sum_{i=1}^n \left(\sum_{j=1}^n  w_{i,j} c_j \right) \one_{L_i}(x) = \lambda\sum_{i=1}^n c_i \one_{L_i}(x)\,.
		\]
		The last equation holds if, and only if, the vector $c = (c_1, \ldots, c_n)$ is an eigenvector of $\widetilde W_{n,r}$, with eigenvalue $\lambda$. That is, there is a one-to-one correspondence between eigenfunctions of $\K_{n,r}$ and the eigenvectors of $\widetilde W_{n,r}$, with matching eigenvalues. This concludes the proof.
	\end{proof}
	
	\subsection{Concentration of order statistics}
	The following lemma shows that the order statistics of the uniformly distributed random variables in $[-1,1]$ are concentrated around their means, which we will use to show that, almost surely, $\K_{n,r} \to \K_r^1$ in the cut-norm.

	\begin{lemma}\label{lem:concentration}
		Let $X^{(k)}$ be the order statistics of $n$ i.i.d.~uniformly distributed points in $B^1$. Then, almost surely, there exists $N>0$ such that, for all $n\ge N$, we have
		\begin{align*}
		\sup_{k=1,\ldots, n}\l|X^{(k)}-\E[X^{(k)}]\r|\le \frac{1}{n^{1/3}},
		\end{align*}
	where
	\begin{equation}\label{eq:The_expectation_of_X^(i)}
		\E[X^{(k)}] = -1 + \frac{2k}{n+1}\in L_{i}.
	\end{equation}
	\end{lemma}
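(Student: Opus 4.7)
The plan is to combine the Dvoretzky--Kiefer--Wolfowitz (DKW) inequality with the Borel--Cantelli lemma. The expected value formula is classical: writing $U_i = (X_i+1)/2$, which are i.i.d.\ uniform on $[0,1]$, one has $U^{(k)} \sim \mathrm{Beta}(k,n-k+1)$ with mean $k/(n+1)$, and hence $\E[X^{(k)}] = 2\E[U^{(k)}] - 1 = -1 + 2k/(n+1)$; the containment in some $L_i$ is automatic, since this value lies in $(-1,1]$.

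For the uniform concentration, let $F(x) = (x+1)/2$ be the common CDF of the $X_i$ and $F_n$ the (right-continuous) empirical CDF, so that $F_n(X^{(k)}) = k/n$ almost surely (using the fact that the $X_i$ are distinct a.s.). The sharp Massart form of DKW,
\[
\P\l(\sup_{x \in [-1,1]} |F_n(x) - F(x)| > \eps\r) \le 2 e^{-2n\eps^2},
\]
applied with $\eps_n = \tfrac{1}{3} n^{-1/3}$ yields a right-hand side of $2 e^{-\frac{2}{9} n^{1/3}}$, which is summable in $n$. By Borel--Cantelli, almost surely there exists a random $N$ such that $\sup_x |F_n(x) - F(x)| \le \eps_n$ for every $n \ge N$. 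Since $F^{-1}(t) = 2t-1$ is $2$-Lipschitz on $[0,1]$, the bound $|F(X^{(k)}) - k/n| = |F(X^{(k)}) - F_n(X^{(k)})| \le \eps_n$ translates into
\[
|X^{(k)} - F^{-1}(k/n)| \le 2\eps_n = \tfrac{2}{3} n^{-1/3}
\]
uniformly in $k$. A direct computation then gives $|F^{-1}(k/n) - \E[X^{(k)}]| = 2k/(n(n+1)) \le 2/(n+1)$, and combining these two estimates by the triangle inequality with the observation that $2/(n+1) \le \tfrac{1}{3} n^{-1/3}$ for all sufficiently large $n$ produces the claimed bound.

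There is no substantive mathematical obstacle here; the only delicate point is arranging the constants so that the final estimate is exactly $n^{-1/3}$ rather than $C n^{-1/3}$, which is why the slack is split as $\tfrac{2}{3} n^{-1/3}$ (DKW error magnified by the Lipschitz factor of $2$) plus $\tfrac{1}{3} n^{-1/3}$ (discretization correction $2/(n+1)$) above. The exponent $1/3$ in the statement is in fact much weaker than what this argument actually delivers---the same reasoning with $\eps_n$ of order $(\log n / n)^{1/2}$ gives a rate of $O((\log n / n)^{1/2})$---so there is ample slack for absorbing constants. One could alternatively run a Bernstein-type concentration for the Beta distribution followed by a union bound over $k = 1,\ldots, n$.
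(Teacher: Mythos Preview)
Your proof is correct but takes a genuinely different route from the paper. The paper's argument (given in the appendix) invokes an external result that $\mathrm{Beta}(\alpha,\beta)$ is $(4(\alpha+\beta+1))^{-1}$ sub-Gaussian, applies the resulting tail bound to each $U^{(k)}$ separately to get $\P[|X^{(k)}-\E X^{(k)}|>n^{-1/3}]\le 2e^{-n^{1/3}/2}$, then takes a union bound over $k=1,\ldots,n$ and finishes with Borel--Cantelli. Your approach instead uses the DKW inequality, which already delivers a bound uniform in $x$, inverts through the $2$-Lipschitz $F^{-1}$, and absorbs the small discretization error $|F^{-1}(k/n)-\E X^{(k)}|=2k/(n(n+1))$ by the triangle inequality. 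Your route is arguably cleaner: the uniformity in $k$ comes for free from DKW rather than from a union bound, and it makes transparent that the true rate is $O((\log n/n)^{1/2})$, with $n^{-1/3}$ being a very loose target. The paper's route is more direct per order statistic but depends on the cited sub-Gaussian estimate for the Beta law. Amusingly, the alternative you mention in your last sentence---concentration for Beta followed by a union bound---is essentially what the paper does.
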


The proof of Lemma \ref{lem:concentration} can be found in  \cite[Lemma 2]{fresen}. For completeness, we provide a simpler proof for this result in Appendix \ref{app1}.

	\subsection{The convergence of $K_{n,r}$}\label{sec:K_converge}
	In this subsection we show that $K_{n,r}$ converges to $K^{1}_r$ in the cut-norm almost surely as  $n\to \infty$. 	We start by defining, for $0<r<2$ and $\varepsilon>0$, the sets 
	\begin{equation}\label{eqn:G}
	\begin{split}
	\mathcal{ G}_1^\varepsilon&=\{(i,j) : |x-y|< r-\varepsilon, \mbox{ for all $(x,y)\in L_i\times L_j$} \},\\
	~\\
	\mathcal{ G}_2^\varepsilon&=\{(i,j): |x-y|>r+\varepsilon, \mbox{ for all $(x,y)\in L_i\times L_j$} \}.
	\end{split}
	\end{equation}
	
	\begin{figure}[h]
		\includegraphics[scale=0.3]{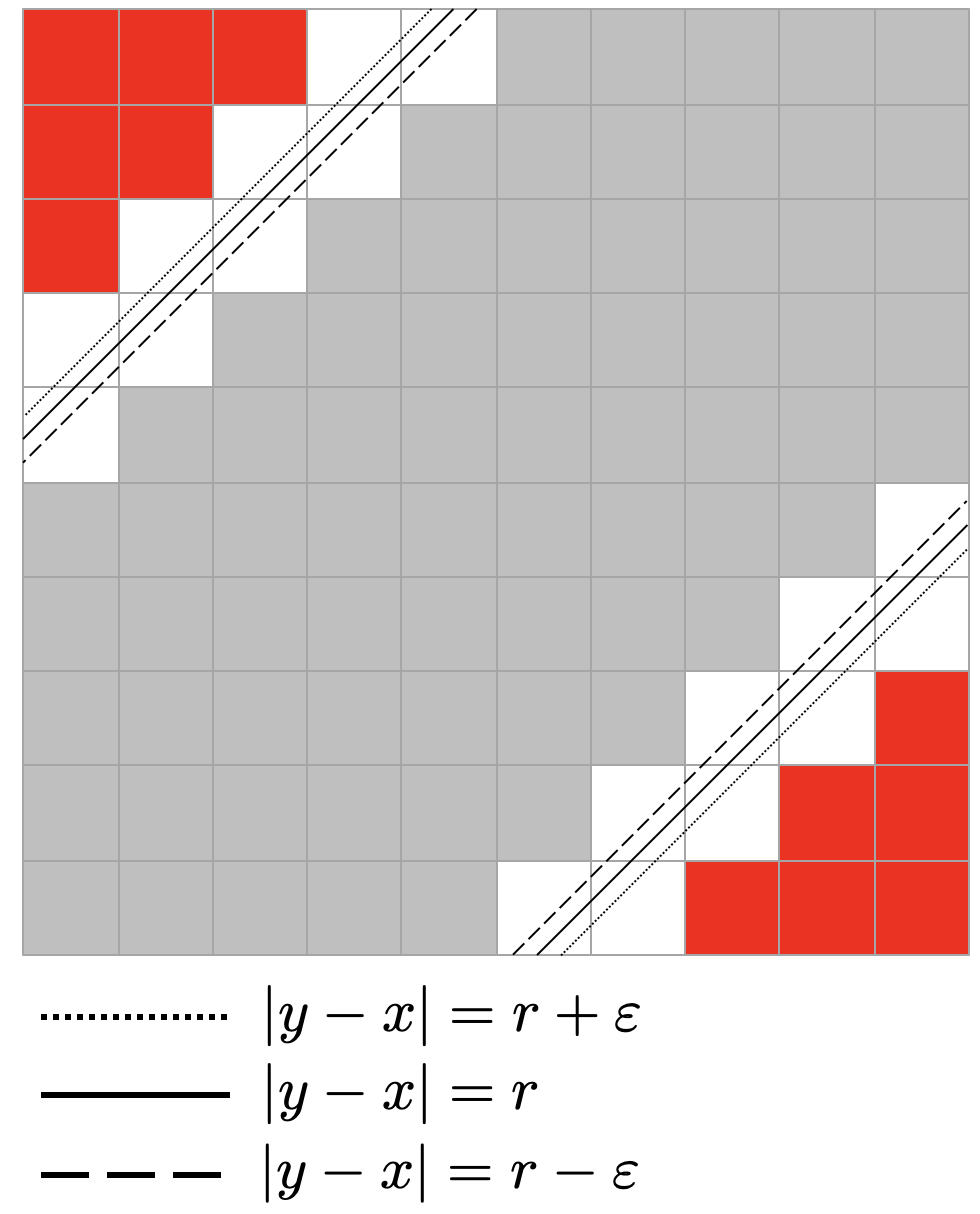}
		\caption{\label{fig:RGG1} 
		The  box $[-1,1]^2$ divided into the cells $L_i\times L_j$.
		The gray cells (middle block) correspond to the set $\mathcal{ G}_1^\varepsilon$ while the red cells (corners) correspond to $\mathcal{ G}_2^\varepsilon$.}
	\end{figure}
	
	\begin{lemma}\label{lem:goodset}
		For every $\varepsilon>0$, almost surely, there exists (a random) $N_\varepsilon\in\mathbb N$  such that, for all $n\ge N_\varepsilon$,  the following two statements are true:
		\begin{enumerate}
			\item If $(i,j)\in \G_1^\varepsilon$ then $h_r(X^{(i)},X^{(j)})=1$.
			\item If $(i,j)\in\G_2^\varepsilon$ then $h_r(X^{(i)},X^{(j)})=0$. 
		\end{enumerate}
	\end{lemma}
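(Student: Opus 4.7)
The plan is to reduce Lemma \ref{lem:goodset} entirely to the uniform concentration of the order statistics provided by Lemma \ref{lem:concentration}. The key observation is that, by \eqref{eq:The_expectation_of_X^(i)}, the mean $\E[X^{(k)}] = -1 + 2k/(n+1)$ actually lies inside the cell $L_k$. Consequently, for any pair $(i,j)\in \mathcal G_1^\varepsilon$, the defining condition ``$|x-y| < r-\varepsilon$ for all $(x,y)\in L_i\times L_j$'' can be specialized to the point $(\E[X^{(i)}],\E[X^{(j)}])$ to yield $|\E[X^{(i)}] - \E[X^{(j)}]| < r-\varepsilon$; analogously, $|\E[X^{(i)}] - \E[X^{(j)}]| > r+\varepsilon$ whenever $(i,j)\in \mathcal G_2^\varepsilon$.

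Next, I would invoke Lemma \ref{lem:concentration} to fix, almost surely, a random threshold $N$ such that
$$\sup_{k} \bigl|X^{(k)} - \E[X^{(k)}]\bigr| \le n^{-1/3}, \qquad \text{for all } n\ge N.$$
Combining this with the triangle inequality gives the uniform bound $\bigl||X^{(i)} - X^{(j)}| - |\E[X^{(i)}] - \E[X^{(j)}]|\bigr| \le 2n^{-1/3}$ for every $i,j$. Setting $N_\varepsilon := \max\bigl\{N,\lceil (2/\varepsilon)^3\rceil\bigr\}$, so that $2n^{-1/3} < \varepsilon$ for $n\ge N_\varepsilon$, we conclude that on $\mathcal G_1^\varepsilon$,
$$|X^{(i)} - X^{(j)}| < (r-\varepsilon) + 2n^{-1/3} < r,$$
yielding $h_r(X^{(i)},X^{(j)}) = 1$, while on $\mathcal G_2^\varepsilon$,
$$|X^{(i)} - X^{(j)}| > (r+\varepsilon) - 2n^{-1/3} > r,$$
yielding $h_r(X^{(i)},X^{(j)}) = 0$.

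I do not anticipate any real obstacle: the statement is essentially a quantitative corollary of the concentration lemma, made transparent by the careful design of the partition $(L_i^n)$, which places each $\E[X^{(i)}]$ inside its own cell and thereby lets us transfer the cell-wise separation in $\mathcal G_1^\varepsilon$ and $\mathcal G_2^\varepsilon$ to the actual order statistics. The rate $n^{-1/3}$ furnished by Lemma \ref{lem:concentration} is far more than what is needed — any power of $n$ tending to zero would suffice.
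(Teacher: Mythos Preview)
Your proof is correct and follows essentially the same route as the paper: both arguments specialize the defining inequality of $\mathcal G_1^\varepsilon$ (resp.\ $\mathcal G_2^\varepsilon$) at the point $(\E[X^{(i)}],\E[X^{(j)}])\in L_i\times L_j$, then use the uniform concentration from Lemma~\ref{lem:concentration} together with the triangle inequality and a threshold ensuring $2n^{-1/3}<\varepsilon$.
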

	
	\begin{proof}
		Fix $\varepsilon>0$. By Lemma \ref{lem:concentration}, there exists (almost surely) $N\in\mathbb N$ such that, for all $n\geq N$, 
		\begin{align*}
		\sup_{1\le i \le n}\l|X^{(i)} - \E[X^{(i)}]\r|\le n^{-1/3}.
		\end{align*}
		Furthermore, by increasing the value of $N$, we can almost surely find $N_\varepsilon\in\mathbb N$ such that $n^{-1/3}<\varepsilon/2$ for all $n\geq N_\varepsilon$. 
		For all $1\le i,j\le n$, we then have
		\begin{align*}
		|X^{(i)}-X^{(j)}|
		&\le |X^{(i)}-\E[X^{(i)}]|+|\E[X^{(i)}]-\E[X^{(j)}]|+|\E[X^{(j)}]-X^{(j)}|\\
		& \leq |\E[X^{(i)}]-\E[X^{(j)}]|+\varepsilon.
		\end{align*}
		Finally, note that, by \eqref{eq:The_expectation_of_X^(i)}, $\mathbb E[X^{(i)}]\in L_i$ for all $1\leq i\leq N$, which implies $\l|\E[X^{(i)}]-\E[X^{(j)}]\r|<r-\varepsilon$ for all $(i,j)\in \G_1^\varepsilon$, and hence
		\begin{align*}
		|X^{(i)}-X^{(j)}| < r-\varepsilon+\varepsilon=r,
		\end{align*}
		as required. 

		Similarly, we can show that $h_r(X^{(i)}, X^{(j)}) =0$, i.e., $|X^{(i)}-X^{(j)}|>r$ for all $(i,j)\in \G_2^\varepsilon$,  completing the proof.
	\end{proof}
	
	\begin{lemma}\label{lem:approx}
		For $0<r<2$, let $H_r$ and $H_{n,r}$  be as defined above in \eqref{eqn:H} and \eqref{eqn:H_n}. Then, almost surely,
		\begin{align*}
		\limninf	\sup_{x\in B^1}\l| H_{n,r}(x)-H_r(x)\r| = 0.
		\end{align*}
	\end{lemma}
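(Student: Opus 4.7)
The plan is to split the uniform error into two pieces by inserting the intermediate quantity $H_r(X^{(i)})$. For $x\in L_i$, write
\[
\sup_{x\in B^1}|H_{n,r}(x)-H_r(x)|\le \max_{1\le i\le n}|H_{n,r}(x)|_{x\in L_i}-H_r(X^{(i)})| \;+\; \max_{1\le i\le n}\sup_{x\in L_i}|H_r(X^{(i)})-H_r(x)|,
\]
where, by definition \eqref{eqn:H_n}, $H_{n,r}$ is constant on $L_i$ and equals $\tfrac{1}{n}\#\{p:|X^{(i)}-X^{(p)}|\le r\}$.

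For the first piece, the key observation is that $\tfrac1n\#\{p:|X^{(i)}-X^{(p)}|\le r\}=F_n(X^{(i)}+r)-F_n((X^{(i)}-r)^-)$, where $F_n(t):=\tfrac1n\sum_{p=1}^n\one_{X_p\le t}$ is the empirical cdf of the sample. Similarly, an easy check using \eqref{eqn:H} and the cdf $F$ of the uniform law on $[-1,1]$ gives $H_r(X^{(i)})=F(X^{(i)}+r)-F(X^{(i)}-r)$. Hence this piece is bounded by $2\|F_n-F\|_\infty$, which tends to zero almost surely by the classical Glivenko--Cantelli theorem.

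For the second piece, I would exploit the explicit formulae \eqref{eqn:H_r_01} and \eqref{eqn:H_r_12}, which show that $H_r$ is piecewise linear on $[-1,1]$ with slope at most $1/2$ in absolute value, so Lipschitz with constant $1/2$. Combining Lemma \ref{lem:concentration} with \eqref{eq:The_expectation_of_X^(i)} (so $\E[X^{(i)}]\in L_i$ and $|L_i|=2/n$), any $x\in L_i$ satisfies
\[
|X^{(i)}-x|\le |X^{(i)}-\E[X^{(i)}]|+|\E[X^{(i)}]-x|\le n^{-1/3}+\frac{2}{n},
\]
eventually almost surely. The Lipschitz bound then makes the second piece $O(n^{-1/3})$, uniformly in $i$ and $x$.

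The main technical ingredient is Glivenko--Cantelli for the first piece; the rest is elementary. There is no genuine obstacle beyond carefully combining the concentration of the order statistics already established in Lemma \ref{lem:concentration} with the Lipschitz continuity of $H_r$ read off from the closed-form expressions above.
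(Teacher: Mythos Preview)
Your proof is correct, but it follows a genuinely different route from the paper's own argument. The paper does not insert the intermediate $H_r(X^{(i)})$; instead, for $x\in L_i$ it writes
\[
|H_{n,r}(x)-H_r(x)|\le \sum_{j=1}^n\Bigl|\tfrac1n h_r(X^{(i)},X^{(j)})-\int_{L_j}h_r(x,u)\,d\nu_1(u)\Bigr|,
\]
and then invokes Lemma~\ref{lem:goodset}: for $(i,j)$ in the ``good'' sets $\mathcal G_1^\varepsilon\cup\mathcal G_2^\varepsilon$ both $h_r(X^{(i)},X^{(j)})$ and $h_r(x,u)$ are constant (equal to $1$ or $0$) on $L_j$, so those summands vanish. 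The remaining ``bad'' indices $j$ are those with $L_j$ straddling the set $\{|x-y|\approx r\}$, and a direct area count shows there are at most $O(\varepsilon n)$ of them, each contributing at most $1/n$.

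Your approach trades this cell-by-cell combinatorics for a single appeal to Glivenko--Cantelli plus the Lipschitz bound on $H_r$, which is arguably slicker for this lemma in isolation and avoids Lemma~\ref{lem:goodset} altogether. The paper's route, by contrast, reuses exactly the $\mathcal G_1^\varepsilon/\mathcal G_2^\varepsilon$ machinery that it needs anyway for the harder Lemma~\ref{lem:kernelconvergence} (convergence of the full kernel $K_{n,r}$), so it keeps the argument uniform across the section. Either proof is fine; yours is shorter here, theirs is more consistent with what follows.
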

	
	\begin{proof}
		Using \eqref{eqn:H} and \eqref{eqn:H_n}, if $x\in L_i$, then
		\[
		\begin{split}
		\l|H_{n,r}(x)-H_r(x)\r| &= \l| \frac{1}{n}\sum_{j=1}^n h_r(X^{(i)}, X^{(j)}) - \int_{B^1} h_r(x,u)d\nu_1(u)\r| \\
		&\le \sum_{j=1}^n\l|\frac{1}{n}h_r(X^{(i)},X^{(j)})- \int_{L_j}h_r(x,u)d\nu_1(u)\r|.
		\end{split}
		\]
		Fix $\varepsilon >0$. By Lemma \ref{lem:goodset}, there exists $N_\varepsilon\in\mathbb N$ such that for all $n\geq N_\varepsilon$, if $(i,j)\in \G_1^\varepsilon$, then $h_r(X^{(i)}, X^{(j)})=1$ and if $(i,j)\in \G_2^\varepsilon$, then $h_r(X^{(i)},X^{(j)})=0$. In addition, since $x\in L_i$ if $(i,j)\in\G_1^\varepsilon$, then for all $u\in L_j$ we have $h_r(x,u) = 1$ and for all $(i,j)\in\G_2^\varepsilon$, we have $h_r(x,u) = 0$. Consequently, if $(i,j)\in \G_1^\varepsilon\cup \G_2^\varepsilon$, then 
		\begin{align*}
		\l|\frac{1}{n}h_r(X^{(i)},X^{(j)})- \int_{L_j}h_r(x,u)d\nu_1(u)\r|=0,
		\end{align*}
		where we used the fact that $\nu_1(L_i)=n^{-1}$ for all $1\leq i\leq n$. 

		Let $\mathcal B^{(i)}=\{j : (i,j)\not\in \G_1^\varepsilon\cup \G_2^\varepsilon \}$. By the previous argument
		\begin{align*}
		\l|H_{n,r}(x)-H_r(x)\r|\leq \sum_{j\in \mathcal B^{(i)}}\l|\frac{1}{n}h_r(X^{(i)},X^{(j)})- \int_{L_j}h_r(x,u)d\nu_1(u)\r| \le \frac{|	\mathcal B^{(i)}|}{n}.
		\end{align*}
		Note that for a fixed $i$, if $j\in \mathcal B^{(i)}$ then there exists $y_0\in L_j$ such that $r-\varepsilon\leq |x-y_0|\leq r+\varepsilon$, and hence, for all $y\in L_j$,
		\[
			r-\varepsilon-\frac{2}{n}<|x-y_0|-|y-y_0|\leq |x-y|\leq |x-y_0|+|y-y_0|\leq r+\varepsilon+\frac{2}{n}\,.
		\]
		In particular, for every $n\geq N_\varepsilon$, and every $x\in B^1$ such that $x\in L_i$, if $j\in \mathcal B^{(i)}$, then $r-2\varepsilon\leq |x-y|\leq r+2\varepsilon$, for all $y\in L_j$. Let 
		$$
		\Omega_{2\varepsilon,r}=\{(x,y)\in [-1,1]^2 \; :\; r-2\varepsilon\leq |x-y|\leq r+2\varepsilon\}.
		$$ 
		(See Figure \ref{fig:RGG2}.)  Since the length of each set $L_j$ is $2/n$, we obtain the bound
		\begin{align*}
		|\mathcal B^{(i)}|\le \frac{|\Omega_{2\varepsilon,r}|}{2/n}\le \frac{8(2-r)\varepsilon }{2/n} = 4(2-r)\varepsilon n.
		\end{align*}
		\begin{figure}[h]
			\includegraphics[scale=0.3]{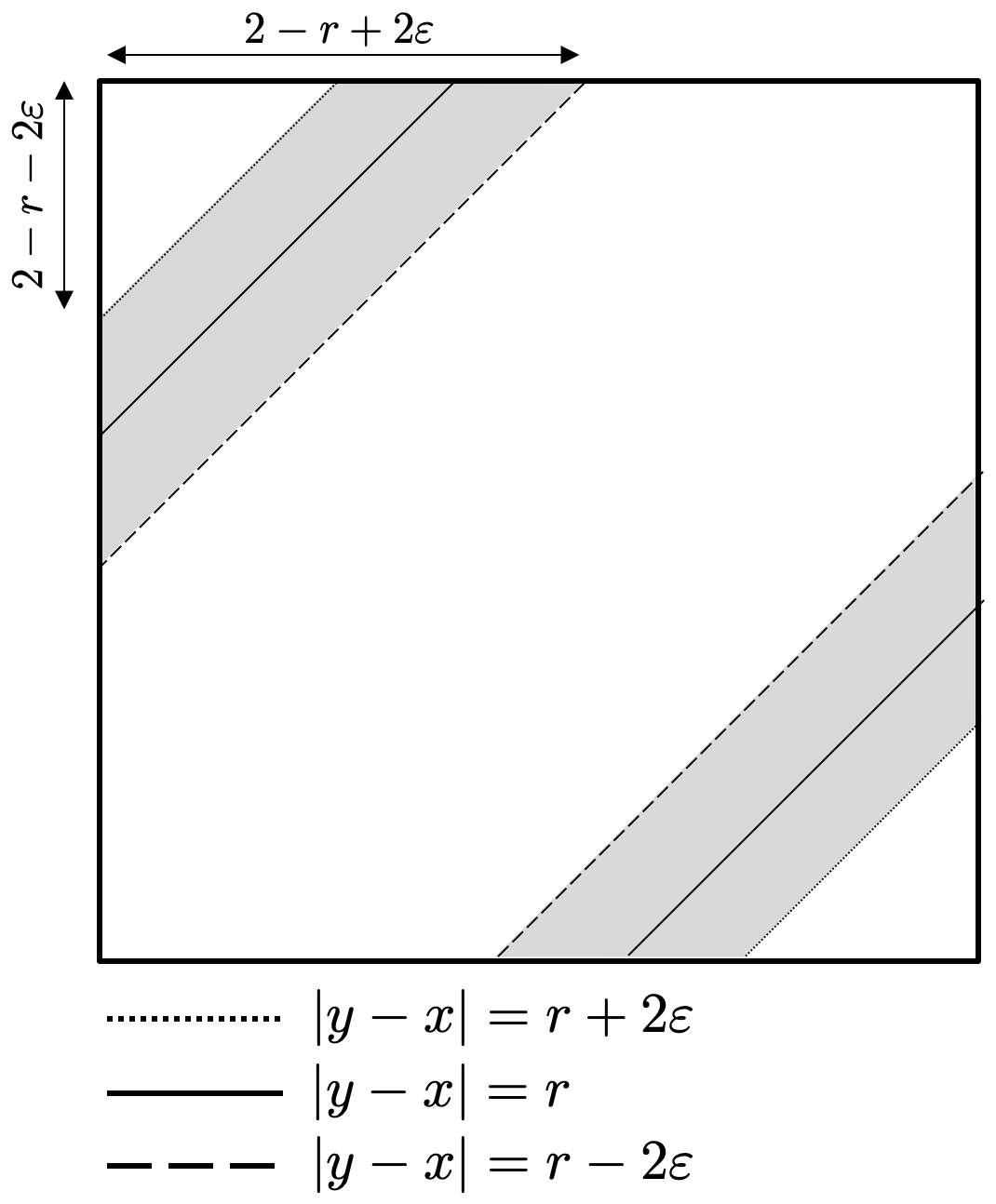}
			\caption{\label{fig:RGG2} The total area of the shaded regions ($\Omega_{2\varepsilon,r}$) is $8\varepsilon(2-r)$.}
		\end{figure}

		Therefore, we have
		\[
		\sup_{x\in B^1}|H_{n,r}(x)-H_r(x)| \le 4(2-r)\varepsilon.  
		\]
		Since $\varepsilon$ was arbitrary, we conclude that, almost surely, the limit is zero.
	\end{proof}
	
	Finally, we are ready to prove the main result of this section.
	\begin{lemma}\label{lem:kernelconvergence}
		For every $0<r<2$, almost surely, $K_{n,r} \to K_r^{1}$ in the cut-norm.
	\end{lemma}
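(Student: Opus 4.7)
The plan is to prove the (stronger) statement $\|K_{n,r}-K_r^1\|_1\to 0$ almost surely, which suffices since $\|\cdot\|_\square\le\|\cdot\|_1$. Fix a small $\varepsilon\in(0,r/8)$ and work on the almost-sure event on which both Lemma \ref{lem:goodset} and Lemma \ref{lem:approx} hold for all $n$ large enough. In particular, for such $n$,
\[
	\sup_{x\in B^1}|H_{n,r}(x)-H_r(x)|<\varepsilon,
\]
and together with the pointwise bound $H_r\ge r/2$ from \eqref{eqn:H_r_01}--\eqref{eqn:H_r_12} this gives the uniform lower bound $H_{n,r}\ge r/4$. Consequently both $K_{n,r}$ and $K_r^1$ are bounded above by $4/r$ on $B^1\times B^1$.

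Next, I would partition $B^1\times B^1$ into the three regions
\[
\mathcal R_1=\bigcup_{(i,j)\in\mathcal G_1^\varepsilon}L_i\times L_j,\qquad
\mathcal R_2=\bigcup_{(i,j)\in\mathcal G_2^\varepsilon}L_i\times L_j,\qquad
\mathcal R_3=(B^1\times B^1)\setminus(\mathcal R_1\cup\mathcal R_2),
\]
and bound $\iint|K_{n,r}-K_r^1|\,d\nu_1\,d\nu_1$ piecewise. On $\mathcal R_2$ both $h_{n,r}$ and $h_r$ vanish (by Lemma \ref{lem:goodset} and the definition of $\mathcal G_2^\varepsilon$), so the integrand is identically zero. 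On $\mathcal R_1$, Lemma \ref{lem:goodset} and the definition of $\mathcal G_1^\varepsilon$ guarantee that $h_{n,r}(x,y)=h_r(x,y)=1$ throughout the cell, so the integrand reduces to
\[
	\left|\frac{1}{\sqrt{H_{n,r}(x)H_{n,r}(y)}}-\frac{1}{\sqrt{H_r(x)H_r(y)}}\right|,
\]
which, using $H_{n,r},H_r\ge r/4$ and Lemma \ref{lem:approx}, is bounded by $C_r\,\varepsilon$ uniformly. Finally, $\mathcal R_3$ is a union of ``boundary'' cells and, by the same area argument used in the proof of Lemma \ref{lem:approx}, has $\nu_1\times\nu_1$ measure at most $|\Omega_{2\varepsilon,r}|=O(\varepsilon)$; combined with the uniform bound $|K_{n,r}-K_r^1|\le 8/r$, the contribution of $\mathcal R_3$ is also $O_r(\varepsilon)$.

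Adding the three pieces yields $\|K_{n,r}-K_r^1\|_1\le C'_r\,\varepsilon$ almost surely for all sufficiently large $n$, and since $\varepsilon>0$ was arbitrary this gives convergence to zero in $\|\cdot\|_1$, hence in $\|\cdot\|_\square$. The only non-trivial step is the elementary bookkeeping estimate
\[
	\left|\tfrac{1}{\sqrt{ab}}-\tfrac{1}{\sqrt{cd}}\right|\le C_r\,\varepsilon\qquad\text{when }|a-c|,\,|b-d|<\varepsilon\text{ and }a,b,c,d\ge r/4,
\]
which follows from writing $\tfrac{1}{\sqrt{ab}}-\tfrac{1}{\sqrt{cd}}=\tfrac{\sqrt{cd}-\sqrt{ab}}{\sqrt{abcd}}$ and controlling the numerator by $|\sqrt{a}-\sqrt{c}|\sqrt{b}+\sqrt{c}|\sqrt{b}-\sqrt{d}|$ together with $|\sqrt u-\sqrt v|\le|u-v|/(2\sqrt{\min\{u,v\}})$. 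All other ingredients are already in hand.
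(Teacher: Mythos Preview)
Your proposal is correct and follows essentially the same approach as the paper: reduce cut-norm convergence to $L^1$ convergence, partition $B^1\times B^1$ into the three regions determined by $\mathcal G_1^\varepsilon$, $\mathcal G_2^\varepsilon$, and their complement, and handle each using Lemmas~\ref{lem:goodset} and~\ref{lem:approx} together with the uniform lower bound $H_{n,r}\ge r/4$. The paper carries explicit constants (e.g.\ $64\varepsilon/(3r^3)$ on $\mathcal G_1^\varepsilon$ and $6/r$ on the boundary region) whereas you work with generic $C_r$, but the argument is otherwise identical.
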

	
	\begin{proof}
		Since the cut-norm is bounded by the $L^1$ norm, it is enough to show that, almost surely, $K_{n,r} \to K_r^1$ in $L^1$, i.e.~that 
		\begin{align*}
		\int_{B^1\times B^1} |K_{n,r}(x,y)-K_r^1(x,y)|d\nu_1(x)d\nu_1(y)\to 0\,,
		\end{align*}
		as $n\to\infty$. 		
		
		For $x\in B^1$, let $1\leq {\mathrm i}(x)\leq n$ to be the unique index such that $x\in L_{{\mathrm i}(x)}$. Fix $\varepsilon>0$, recall the notation in \eqref{eqn:G} and define $\mathcal B^\varepsilon = \l\{(i,j)\in [n]^2 :  (i,j) \not \in \G_1^\varepsilon\cup\G_2^\varepsilon\r\}$. Then, almost surely, there exists $N_\varepsilon\in\N$ such that, for all $n\geq N_\varepsilon$,
		\begin{equation}\label{K_n,r-K_r^1_estimation}
			|K_{n,r}(x,y)-K_r^1(x,y)|\leq \begin{cases}
			 \frac{64\varepsilon}{3r^3} & ({\mathrm i}(x),{\mathrm i}(y))\in \G_1^\varepsilon\\
			 0 & ({\mathrm i}(x),{\mathrm i}(y))\in \G_2^\varepsilon\\
			 \frac{6}{r}& ({\mathrm i}(x),{\mathrm i}(y))\in \mathcal B^\varepsilon\\			 
			\end{cases}\,.
		\end{equation}				
In fact, if $({\mathrm i}(x),{\mathrm i}(y))	\in \G_1^\varepsilon$, then $h_r(x,y)=1$, and from Lemma \ref{lem:goodset} $h_r(X^{(i)}, X^{(j)})=1$. Therefore,  
		\[
		|K_{n,r}(x,y)-K_r^1(x,y)| = \l|\frac{1}{\sqrt{H_{n,r}(x)H_{n,r}(y)}} - \frac{1}{\sqrt{H_r(x)H_r(y)}} \r|=\frac{N_r(x,y)}{D_r^{(1)}(x,y)D_r^{(2)}(x,y)},
		\]
		where 
		\begin{align*}
		N_r(x,y)&:=\l|H_r(x)H_r(y) - H_{n,r}(x) H_{n,r}(y)\r|\\
		D_r^{(1)}(x,y)&:=\l| \sqrt{H_r(x)H_r(y)}+\sqrt{H_{n,r}(x)H_{n,r}(y)}\r|\\
		D_r^{(2)}(x,y)&:=\sqrt{H_{n,r}(x)H_{n,r}(y)H_r(x)H_r(y)},
		\end{align*}
		and 
		\[
		H_{n,r}(x) = \frac{1}{n}\sum_{p=1}^n h_r(X^{({\mathrm i}(x))}, X^{(p)})\qquad\text{and}\qquad H_{n,r}(y) = \frac{1}{n}\sum_{q=1}^n h_r(X^{({\mathrm i}(y))}, X^{(q)}).
		\]
		Recall from \eqref{eqn:H_r_01} and \eqref{eqn:H_r_12} that $\frac{r}{2}\le H_r(x)\le 1$ for $x\in B^1$. Hence, by Lemma \ref{lem:approx}, we have  that, almost surely, for all sufficiently large $n$,
		\begin{equation}\label{eq:bafjklbalfkbvja}
				\frac{r}{4}\leq H_r(x)-\frac{r}{4}\leq H_{n,r}(x),
		\end{equation}
		and 
		\[
			\sup_{x\in B^1}\l| H_{n,r}(x)-H_r(x)\r| \leq \varepsilon\,.
		\]
		Observe  that $H_{n,r}(x)\le 1$. Hence we conclude that 
		\begin{align*}
			D_r^{(1)}(x,y)\ge \frac{3r}{4}\qquad\text{and}\qquad D_r^{(2)}(x,y)\ge \frac{r^2}{8}\,,
		\end{align*}
		and		
		\[
		N_r(x,y)\le |H_r(x)||H_r(y)-H_{n,r}(y)|+|H_{n,r}(y)||H_r(x)-H_{n,r}(x)|\leq 2\varepsilon\,.
		\]
		Combining all of the above we conclude that almost surely, for all sufficiently large $n$
		\[
			|K_{n,r}(x,y)-K_r^1(x,y)|\leq \frac{2\varepsilon}{\frac{3r}{4}\cdot\frac{r^2}{8}}\leq \frac{64\varepsilon}{3r^3}\,.
		\]
		
		Turning to the second case in \eqref{K_n,r-K_r^1_estimation}, note that if $({\mathrm i}(x),{\mathrm i}(y))\in \G_2^\varepsilon$, then $h_r(x,y)=0$, and from Lemma \ref{lem:goodset} also $h_r(X^{(i)}, X^{(j)})=0$. Therefore, $|K_{n,r}(x,y)-K_r^1(x,y)|=0$. 
		
		Finally, if $({\mathrm i}(x),{\mathrm i}(y))\in \mathcal B^\varepsilon$, then from \eqref{eq:bafjklbalfkbvja}, the fact that $|h_r(x,y)|\leq 1$ for all $x,y\in B^1$ and the definitions of $K_{n,r}$ and $K_r^1$, it follows that 
		\[
			|K_{n,r}(x,y)-K_r^1(x,y)|\leq |K_{n,r}(x,y)|+|K_r^1(x,y)| \leq \frac{4}{r}+\frac{2}{r} = \frac{6}{r}\,.
		\]		
		
		Combining \eqref{K_n,r-K_r^1_estimation} with the fact that $(\nu_1\times\nu_1)(L_i\times L_j) = n^{-2}$, we conclude that 
		\begin{equation}\label{eq:blablabla}
		\begin{aligned}
			\int_{B^1\times B^1} |K_{n,r}(x,y)-K_r^1(x,y)|d\nu_1(x)d\nu_1(y)\leq \frac{64\varepsilon}{3r^3}\frac{|\G_1^\varepsilon|}{n^2}+\frac{6}{r}\frac{|\mathcal B^\varepsilon|}{n^2}\leq \frac{64\varepsilon}{3r^3}+\frac{6|\mathcal B^\varepsilon|}{rn^2}\,,
		\end{aligned}
		\end{equation}
		where in the last inequality we used the fact that $|\mathcal G_1^\varepsilon|\leq n^2$. 
		
		Next, we bound the size of $\mathcal B^\varepsilon$. Note that, for every $x,y\in B^1$, if $({\mathrm i}(x),{\mathrm i}(y))\in\mathcal{B}^\varepsilon$, then $r-2\varepsilon<|x-y|<r+2\varepsilon$, 
		and so 
		\[
			(\nu_1\times\nu_1)\big(\{(x,y) : ({\mathrm i}(x),{\mathrm i}(y))\in \mathcal{B}^\varepsilon\}\big)\le \nu_1\times\nu_1 (\Omega_{2\varepsilon,r})= 2(2-r)\varepsilon\,.
		\]
		Since the sets $(L_i\times L_j)_{i,j=1}^n$ are disjoint, cover $B^1\times B^1$ and $(\nu_1\times \nu_1)(L_i\times L_j)=n^{-2}$ for all $1\leq i,j\leq n$, it follows that
		\[
			|\mathcal B^\varepsilon|\leq \frac{(\nu_1\times\nu_1)\big(\{(x,y) : ({\mathrm i}(x),{\mathrm i}(y))\in \mathcal{B}^\varepsilon\}\big)}{\frac{1}{n^2}}\leq 2(2-r)\varepsilon n^2 . 
		\]
		Combining the last bound together with \eqref{eq:blablabla}, we conclude that for all sufficiently large $n$
		\[
			\int_{B^1\times B^1} |K_{n,r}(x,y)-K_r^1(x,y)|d\nu_1(x)d\nu_1(y)\leq \frac{64\varepsilon}{3r^3}+\frac{12(2-r)\varepsilon }{r}\,.
		\]	
		Since $\varepsilon>0$ was arbitrary, the result follows. 
	\end{proof}

	
	\section{ construction of the kernel for $K_{n,r}$  $d= 2$}\label{sec:kernel_d2}
	Our next goal is to generalize the results from the previous section to arbitrary dimension $d\geq 2$. That is, to construct a sequence of  kernels $K_{n,r}:B^d\times B^d\to \R$ that possess the same spectra as the symmetrically normalized adjacency operators and converge in the cut norm to $K_r^d$.   
	
	 Recall that the kernel                $K_{n,r}$, defined in \eqref{eqn:K_n_formula} for $d=1$, uses an ordering of the points based on their (single) coordinate value. 
	The main challenge now is how to choose a similar ordering on the points, when $d\ge 2$. Throughout this section, instead of considering the kernels $K_{n,r}$ for arbitrary choice of $n$, we only examine the case where $n=m^d$ for some $m\in \N$. This will help us devising the required ordering on the points. Later on, in Section \ref{sec:thm1}, we will show how to extend the results from $n=m^d$ to any $n$. Finally, since the case $d=2$ is considerably simpler than the general case, we start by providing all the details for $d=2$ in this section. The general construction, which is done similarly, is outlined in Section \ref{sec:generald}. 
	
	\subsection{Kernel definition and spectrum.}

As mentioned above, the challenging part here is to define a useful ordering on the $d$-dimensional points $X_1\ldots, X_n$.	Assume that $n=m^2$ for some $m\in\N$, and let $X_{i}=(X_{i,1}, X_{i,2})$,  $i=1,\ldots, n$, be i.i.d.~uniformly distributed random variables in $B^2=[-1,1]^2$. We  order the points $X_1,\ldots, X_n$ and rename them in the following way: 
	
	\vspace{10pt}
	\noindent {\bf Step I :} We order $X_1,\ldots, X_n$ according to the order statistics of the first coordinates $X_{1,1}, \ldots, X_{n,1}$, and denote the resulting points by $X^{(1)},\ldots,X^{(n)}$. In other words, if $X^{(i)}=(X_1^{(i)},X_2^{(i)})$ for $i=1,\ldots, n$, then,  for $ i< j$,
	\[
	X_1^{(i)}\le X_1^{(j)}.
	\]
	
	\vspace{.2cm}
	\noindent {\bf Step II :} We take the first $m$ variables $X^{(1)}, \ldots, X^{(m)}$, and re-order them according to the order statistics of the second coordinate $X_{2}^{(1)}, \ldots,X_{2}^{(m)}$. The resulting ordering is denoted by $X^{(1,1)},\ldots, X^{(1, m)}$, so that, if  $X^{(1,i)}=(X_1^{(1,i)},X_2^{(1,i)})$ for $i=1,\ldots, m$, then, for $1\le i < j\le {m}$,
	\begin{align*}
	X_2^{(1,i)}\le X_2^{(1,j)}.
	\end{align*}
	
	\noindent {\bf Step III :} We order each of the $m$-tuples in a similar fashion. For $p =2,\ldots,{m}$, take $X^{((p-1) m+1)},$ $ \ldots, X^{(pm)}$, and sort them according to the order statistics of the second coordinate $X_2^{((p-1) m+1)},$ 
	$\ldots,X_2^{(p m)}$. The resulting ordered random variables  are denoted by $X^{(p,1)}, \ldots, X^{(p, m)}$, so that  $X^{(p,i)}=(X_1^{(p,i)},X_2^{(p,i)})$, $i=1,\ldots, m$, and, for every $1\le i < j \le {m}$,
	\begin{align*}
	X_2^{(p,i)}\le X_2^{(p,j)}.
	\end{align*}
	
	The result is a collection of indexed variables $(X^{(i,j)})_{i,j=1}^m$ with the property such that $X_1^{i,k}\leq X_1^{j,l}$ for all $1\leq i\leq j\leq m$ and $1\leq k,l\leq m$, and $X_2^{k,i}\leq X_2^{k,j}$ for all $q\leq i,j\leq m$ and all $1\leq k\leq m$. This new ordering of the points will play a crucial role for the construction of  $K_n^{2}$.
	

	
			Recall that in order  to define the kernel $K_{n,r}$ for $d=1$ we divided $B^1$ into the intervals $L_{i}$, (cf.\  \eqref{eqn:L_i}) and  that the bulk of the convergence proof relied on the fact that $X^{(i)} \in L_i$ for all $i$ (almost surely for large enough $n$). For $d=2$, we use a similar construction, where we divide $B^2$ into boxes $L_{p,q}^n$ and show that almost surely, for $n$ large enough, we have $X^{(p,q)}\in L_{p,q}^n$ for all $p,q$.
	More concretely, recall the definition of $L_i^n$ in  \eqref{eqn:L_i}, and,  for every $1\leq p,q\leq m$, define $L_{p,q}^n=L_{p,q}$ by 
	\begin{equation}\label{eqn:L_i_d2}
	L_{p,q}^n =L_p^m\times L_q^m \subset B^2.
	\end{equation}
	Note that  $B^2=\bigsqcup_{p,q=1}^{m} L_{p,q}^n$, and $\nu_2(L_{p,q}^n) =m^{-2}=n^{-1}$.

	The kernels defined in this subsection, are similar to the ones from Section \ref{sec:d1}, where instead of $X^{(i)}$ and $L^n_i$ we use $X^{(p,q)}$ and $L_{p,q}^n$.
	
	Recall that $n=m^2$, and define a sequence of random functions  $h_{n,r}\; :\;B^2\times B^2\to \R$ by 
	\begin{align*}
	h_{n,r}(x,y):=\sum_{p,q,p',q'=1}^{m}h_r(X^{(p,q)}, X^{(p',q')})\one_{L_{p,q}^m}(x)\one_{L_{p',q'}^m}(y).
	\end{align*}
	Furthermore, for $x\in B^2$, denote
	\begin{equation}\label{eqn:H_n_d2}
	H_{n,r}(x,y)=\int_{B^2} h_{n,r}(x,y)d\nu_2(y)=\frac{1}{n}\sum_{p,q=1}^m\one_{L_{p,q}}(x)\sum_{p',q'=1}^m h_r(X^{(p,q)}, X^{p',q')}),
	\end{equation}
	where we used the fact that  $\nu_2(L_{p',q'}^n) = n^{-1}$.
	
	Next, define a sequence of random kernels $K_{n,r}\; : \; B^2\times B^2\to \R$ by
	\begin{align*}
	K_{n,r}(x,y)&:=\frac{h_{n,r}(x,y)}{\sqrt {H_{n,r}(x) H_{n,r}(y)}}\,,
	\end{align*}
	or, equivalently, 
	\begin{equation}\label{eqn:K_n_formula_2}
	K_{n,r}(x,y)=\sum_{p,q,p',q'=1}^{m}\frac{nh_r(X^{(p,q)}, X^{(p',q')})\one_{L_{p,q}}(x)\one_{L_{p',q'}}(y)}{\sqrt{\sum_{a_1,a_2,a_3,a_4=1}^{m}h_r(X^{(p,q)}, X^{(a_1,a_2)})h(X^{(p',q')}, X^{(a_3,a_4)})}}.
	\end{equation}
	Finally, let $\mathcal K_{n,r}\; : \; L^2(B^2, \nu_2) \to L^2(B^2, \nu_2)$  be the Hilbert-Schmidt kernel operator corresponding to  the kernel $K_{n,r}$,  i.e.
	\begin{align}\label{eqn:kerneld2}
	\mathcal K_{n,r}f(x)=\int_{B^2} K_{n,r}(x,y)f(y)d\nu_2(y).
	\end{align}
	
	The following is the 2-dimensional analogue of Lemma \ref{lem:equaleigenvalues}.
	\begin{lemma}\label{lem:eigenvaluessame2}
		Suppose that $d=2$ and $0<r<2$, and let  $\mathcal K_{n,r}$ be as defined in \eqref{eqn:kerneld2}.  Then $\spec(\K_{n,r}) = \spec(W_{n,r})$, where $W_{n,r}$ is as defined in \eqref{eqn:W_n}.
	\end{lemma}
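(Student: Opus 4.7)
The plan is to mimic the one-dimensional argument in Lemma \ref{lem:equaleigenvalues}, replacing the single index $i$ by the double index $(p,q)$ and the intervals $L_i$ by the rectangles $L_{p,q}$. Since permuting the vertex labels of a graph does not change the spectrum of its (symmetrically normalized) adjacency matrix, relabelling $X_1,\ldots,X_n$ as $(X^{(p,q)})_{p,q=1}^m$ via the two-step sorting procedure produces matrices $\widetilde A_{n,r}$ and $\widetilde W_{n,r}$ with $\spec(\widetilde W_{n,r})=\spec(W_{n,r})$. Writing $a_{(p,q),(p',q')}:=h_r(X^{(p,q)},X^{(p',q')})$ and $d_{(p,q)}:=\sum_{p',q'}a_{(p,q),(p',q')}$, the entries of $\widetilde W_{n,r}$ are
\[
w_{(p,q),(p',q')}\;=\;\frac{a_{(p,q),(p',q')}}{\sqrt{d_{(p,q)}\,d_{(p',q')}}},
\]
and \eqref{eqn:K_n_formula_2} can be rewritten as
\[
K_{n,r}(x,y)\;=\;n\sum_{p,q,p',q'=1}^{m} w_{(p,q),(p',q')}\,\one_{L_{p,q}}(x)\,\one_{L_{p',q'}}(y).
\]

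Next, I would show that any eigenfunction of $\K_{n,r}$ with non-zero eigenvalue is forced to be piecewise constant on the partition $\{L_{p,q}\}_{p,q=1}^m$. Indeed, if $f\in L^2(B^2,\nu_2)$ satisfies $\K_{n,r}f=\lambda f$, then for every $x\in B^2$,
\[
\lambda f(x)\;=\;n\sum_{p,q=1}^m\Bigl(\sum_{p',q'=1}^m w_{(p,q),(p',q')}\,\langle f,\one_{L_{p',q'}}\rangle\Bigr)\one_{L_{p,q}}(x),
\]
so for $\lambda\neq 0$ the function $f$ agrees a.e.\ with a function of the form $f=\sum_{p,q}c_{p,q}\one_{L_{p,q}}$. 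Substituting this expansion back and using $\nu_2(L_{p',q'})=1/n$ gives
\[
\K_{n,r}f(x)\;=\;\sum_{p,q=1}^m\Bigl(\sum_{p',q'=1}^m w_{(p,q),(p',q')}\,c_{p',q'}\Bigr)\one_{L_{p,q}}(x).
\]
Therefore, identifying the coefficients, $f$ is an eigenfunction of $\K_{n,r}$ with eigenvalue $\lambda$ if and only if the vector $c=(c_{p,q})_{p,q=1}^m\in\mathbb R^{n}$ satisfies $\widetilde W_{n,r}\,c=\lambda c$, i.e.\ $c$ is an eigenvector of $\widetilde W_{n,r}$ with the same eigenvalue. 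The case $\lambda=0$ is handled symmetrically: the kernel of $\K_{n,r}$ on the subspace of piecewise-constant functions corresponds to the kernel of $\widetilde W_{n,r}$, and the orthogonal complement (functions with zero mean on each $L_{p,q}$) lies in the kernel of $\K_{n,r}$ by construction, contributing zero eigenvalues in the same multiplicities as in the matrix setting when one accounts for the identification of spectra.

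The only subtle step is keeping careful track of multiplicities of $0$ and verifying the one-to-one correspondence on the piecewise-constant subspace; otherwise, the argument is essentially an exact copy of the $d=1$ proof, with the relabelling of vertices replaced by the two-level sorting procedure of Steps I--III. No additional estimates are required beyond the identity $\nu_2(L_{p,q})=1/n$ and the fact that relabelling preserves the spectrum of $W_{n,r}$.
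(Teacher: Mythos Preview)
Your proposal is correct and follows essentially the same approach as the paper: relabel the vertices via the two-step sort, observe that this conjugates $W_{n,r}$ by a permutation matrix and hence preserves the spectrum, rewrite $K_{n,r}$ as $n\sum w_{(p,q),(p',q')}\one_{L_{p,q}}\one_{L_{p',q'}}$, and then run the piecewise-constant argument from Lemma \ref{lem:equaleigenvalues} verbatim. Your discussion of the $\lambda=0$ case is a bit muddled (the orthogonal complement of the piecewise-constant subspace is infinite-dimensional, so the multiplicities of $0$ cannot literally match), but the paper is equally loose here and only the nonzero eigenvalues are ever used downstream via Lemma \ref{lem:szegedy}.
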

	
	\begin{proof}
		The proof is very similar to that of Lemma \ref{lem:equaleigenvalues}, and so we only highlight the differences.
		
		Let $\widetilde A_n$ and $\widetilde W_n$ be the matrices describing the graph generated by the sorted points $X^{(p,q)}$, where we use the lexicographic ordering on the pairs $(p,q)$ as described above. Using a slight abuse of notation, we use quadruplets $(p,q,p',q')$, as entry indices for the matrices $\widetilde A_n, \widetilde W_n$. Since we only changed the order of the original vertices $X_1,\ldots, X_n$, we have $\spec(\widetilde W_n) = \spec(W_n)$.
		
		For $1\leq p,q,p',q'\leq m$, denote 
		\[
			a_{(p,q),(p',q')} = (\widetilde A_n)_{(p,q),(p',q')} = h_r(X^{(p,q)},X^{(p',q')}),
		\]
		and
		\[
			d_{(p,q)} = 	\sum_{p',q'=1}^m a_{(p,q),(p',q')}.
		\]
		Then the $(p,q,p',q')$-th entry of $\widetilde W_n$  can be written as
		\begin{align*}
		w_{(p,q),(p',q')} = \frac{a_{(p,q),(p',q')}}{\sqrt{d_{(p,q)}d_{(p',q')}}}.
		\end{align*}
		Using this notation with \eqref{eqn:K_n_formula_2}, we can also write
		\[
			K_{n,r}(x,y)= n\sum_{p,q,p',q'=1}^{m}w_{(p,q),(p',q')}\one_{L_{p,q}}(x)\one_{L_{p',q'}}(y).
		\]
		The rest of the proof is identical to that of Lemma \ref{lem:equaleigenvalues}.
		
	\end{proof}
	
	\subsection{Concentration statements}
	Similarly to the case $d=1$, we want to show that $X^{(p,q)}\in L_{p,q}^n$.
		\begin{lemma}\label{lem:concentration2}
		Let $(X^{p,q})_{p,q=1}^m$  be the ordering defined above. Then, almost surely, there exists $N\in\N$ such that, for all $n\ge N$, 
		\begin{align*}
		\sup_{1\le p,q \le m}\big\|X^{(p,q)}-\E[X^{(p,q)}]\big\|_\infty\le {n^{-1/6}},
		\end{align*}
		where
				\begin{align*}
		\E[X^{(p,q)}]\in L^n_{p,q}.
		\end{align*}
	\end{lemma}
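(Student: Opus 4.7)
The plan is to decompose the two-dimensional concentration into two essentially one-dimensional problems: one for the first coordinates (jointly for all $n$ points) and one for the second coordinates within each of the $m$ first-coordinate blocks (separately per block). The crucial structural observation is that, by independence of the two coordinates of each $X_i$, conditioning on the first-coordinate ordering preserves the distribution of the second coordinates: within each block $p$, the multiset of second coordinates attached to $\{X^{((p-1)m+1)},\ldots,X^{(pm)}\}$ consists of $m$ i.i.d.\ uniform samples on $[-1,1]$, and these multisets are mutually independent across $p$. Consequently, within each block the ordering $X_2^{(p,1)}\le\cdots\le X_2^{(p,m)}$ is exactly the order statistics of $m$ i.i.d.\ uniform samples, and the permutation $\sigma_p$ of $\{1,\ldots,m\}$ that sends block-internal first-coordinate ranks to the $(p,q)$-indexing is uniformly random.

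For the first coordinate I would apply Lemma \ref{lem:concentration} directly to $X_1^{(1)}\le\cdots\le X_1^{(n)}$, the order statistics of $n$ i.i.d.\ uniform samples on $[-1,1]$, to get $\sup_i |X_1^{(i)} - \mathbb E[X_1^{(i)}]|\le n^{-1/3}$ almost surely for large $n$, where $\mathbb E[X_1^{(i)}]=-1+2i/(n+1)$. By exchangeability of $\sigma_p$, $\mathbb E[X_1^{(p,q)}]=\frac{1}{m}\sum_{k=1}^m \mathbb E[X_1^{((p-1)m+k)}] = -1+\frac{(2p-1)m+1}{n+1}$, which a short calculation shows lies in $L_p^m$. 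Since both $X_1^{(p,q)}$ and $\mathbb E[X_1^{(p,q)}]$ then lie within $n^{-1/3}+2/m$ of any fixed point of $L_p^m$, we conclude $|X_1^{(p,q)}-\mathbb E[X_1^{(p,q)}]| = O(n^{-1/3})$, which is comfortably smaller than $n^{-1/6}$. For the second coordinate, $\mathbb E[X_2^{(p,q)}]=-1+2q/(m+1)\in L_q^m$ by the standard formula for uniform order statistics applied inside block $p$.

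The main obstacle is to control the second-coordinate order statistics \emph{simultaneously} over all $m$ blocks. A single application of Lemma \ref{lem:concentration} per block gives only an almost-sure statement for that individual block, which cannot be combined across $m$ blocks without a quantitative ingredient. I would instead extract from the proof of Lemma \ref{lem:concentration} in Appendix \ref{app1} the underlying exponential tail estimate of the form $\mathbb P\bigl(\sup_q |X_2^{(p,q)}-\mathbb E[X_2^{(p,q)}]|>m^{-1/3}\bigr)\le C_1 e^{-C_2 m^{1/3}}$ for absolute constants $C_1,C_2>0$ (the appendix argument is itself a Borel--Cantelli computation with exactly such exponential tails). A union bound over the $m$ blocks then yields a total failure probability at most $C_1 m\,e^{-C_2 m^{1/3}}$, which is summable in $n=m^2$, so a second application of Borel--Cantelli delivers $\sup_{p,q}|X_2^{(p,q)}-\mathbb E[X_2^{(p,q)}]|\le m^{-1/3}=n^{-1/6}$ almost surely for all sufficiently large $n$. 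Combining this with the first-coordinate bound via the coordinate-wise definition of $\|\cdot\|_\infty$ finishes the proof, and the assertion $\mathbb E[X^{(p,q)}]\in L^n_{p,q}$ follows from the explicit formulas for the two expectations recorded above.
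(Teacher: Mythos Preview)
Your proposal is correct and follows essentially the same decomposition as the paper: compute $\E[X_1^{(p,q)}]$ via the random-permutation observation, compute $\E[X_2^{(p,q)}]$ via the block-internal order statistics, handle the first coordinate by applying Lemma~\ref{lem:concentration} to all $n$ points and then comparing to the block average, and handle the second coordinate block by block.

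In fact, on the second-coordinate step you are \emph{more} careful than the paper. The paper's appendix proof invokes Lemma~\ref{lem:concentration} separately for each block $p$ to obtain a random threshold $N_2(p)$ and then sets $N_2=\max_{1\le p\le m}N_2(p)$; but since the number of blocks $m$ grows with $n$ and the random variables populating block $p$ change with $n$, this formulation is not quite a legitimate application of the almost-sure statement of Lemma~\ref{lem:concentration}. You correctly flag this and resolve it by extracting the underlying sub-Gaussian tail bound from the appendix (namely $\mathbb P(\sup_q|X_2^{(p,q)}-\E[X_2^{(p,q)}]|>m^{-1/3})\le 2m\,e^{-m^{1/3}/2}$), taking a union bound over the $m$ blocks, and then applying Borel--Cantelli. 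This is the right way to make the argument rigorous, and it costs nothing extra since the exponential tail is exactly what the appendix already provides.
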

	
	\begin{proof}
	
	The bound in this lemma can be obtained using \cite[equation (1.1)]{shor}. See \cite[Section 4]{shor89} for the proof, which in fact gives a better bound. For the sake completeness we provide an alternative proof of for this bound using order statistics arguments in Appendix \ref{app1}. 
The remainder of the proof is dedicated to show that, indeed, $\E[X^{(p,q)}]\in L^n_{p,q}$.

		Denote by $\underline X_1=(X_1^{(1)},\ldots, X_1^{(n)})$ the vector of first coordinates of all points, and suppose that $\underline X_1$ is given.
		In this case, by Step II and Step III, for all $p = 1,\ldots, m$, the values of $X_1^{(p,1)}, \ldots, X_1^{(p,m)}$ are    {the same as those in }  the sequence $X_1^{((p-1)m+1)}, \ldots, X_1^{(pm)}$, {under}  a random  permutation (since they are  ordered according to the values of the second coordinates $X_2^{((p-1)m+1)},$ $\ldots, X_2^{(pm)}$, which are i.i.d.\ and independent of the first coordinate). Therefore, 
		\begin{equation}\label{eqn:E_X_pq_0}
	  		\E[X^{(p,q)}_1 \given \underline X_1] = \frac{1}{m} \sum_{s=1}^m X_1^{((p-1)m + s)}. 
		\end{equation}
		Next, from Step I we have that $X_1^{(1)}, \ldots, X_{1}^{(n)}$ are the order statistics of $n=m^2$ i.i.d.~uniformly distributed random variables in $[-1,1]$. Thus, using \eqref{eq:The_expectation_of_X^(i)},
		\begin{equation}\label{eqn:E_X_pq}
		\begin{split}
		\E[X^{(p,q)}_1] &=  \frac{1}{m} \sum_{s=1}^{ m}\frac{2((p-1) m+s)}{ n+1}-1\\
		&=-1+\frac{2pm -m +1}{n+1},
		\end{split}
		\end{equation}
		and  it follows that $\E[X^{(p,q)}_1] \in L_{p,m}$.
		
		Next, fix $p$, and notice that  given $\underline X_1$ we have that $X^{(p,1)}_2\le \cdots \le X^{(p, m)}_2 $  are the order statistics of 
		$m$ i.i.d.~random variables, uniformly distributed in $[-1,1]$. Therefore, using \eqref{eq:The_expectation_of_X^(i)} again, gives 
		\[
		\E[X_2^{(p,q)} \given \underline X_1] = -1 + \frac{2q}{m+1},
		\]
		which implies that $\E[X_2^{(p,q)}] = -1 + \frac{2q}{m+1} \in L_{q,m}$.
		To conclude, we showed that for all $1\le p,q \le m$, 
		\begin{align*}
		(\E[X^{(p,q)}_1], \E[	X^{(p,q)}_2])\in  L_{p}^m\times L_{q}^m=L_{p,q}^n,
		\end{align*}
	as required.
	\end{proof}
	

	\subsection{The convergence of $K_{n,r}$.} 
	
	In this section we show that $K_{n,r}$ converges to $K^{2}_r$ in the cut-norm, almost surely, as  $n\to \infty$. The proofs leading to this statement follow  steps similar to  those in Section \ref{sec:K_converge}, and so we only highlight the main differences.
	
	Fix $0<r<2$ and $\varepsilon>0$. Similarly to \eqref{eqn:G}, we start by defining the sets 
	\begin{equation}\label{eqn:G_12}
	\begin{split}
	{ \mathcal G}_1^{\varepsilon}&:=\{((p,q),(p',q')) ~:~ \|x-y\|< r-\varepsilon, \mbox{ for all }(x,y)\in L_{p,q}\times L_{p',q'} \},\\
	{\mathcal  G}_2^\varepsilon & :=\{((p,q),(p',q')) ~:~ \|x-y\|> r+\varepsilon, \mbox{ for all }(x,y)\in L_{p,q}\times L_{p',q'} \}.
	\end{split}
	\end{equation}
	
	We start by proving the analogue of Lemma \ref{lem:goodset}.
	\begin{lemma}\label{lem:goodset2}
		Almost surely, there exists (random) $N_\varepsilon>0$ such that, for all $n\ge N_\varepsilon$,  the following two statements are true:
		\begin{enumerate}
			\item If $((p,q),(p',q'))\in {\mathcal G}_1^\varepsilon$, then $h_r(X^{(p,q)},X^{(p',q')})=1$.
			\item If $((p,q),(p',q'))\in {\mathcal G}_2^\varepsilon$, then $h_r(X^{(p,q)},X^{(p',q')})=0$.
		\end{enumerate}
	\end{lemma}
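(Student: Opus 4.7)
The plan is to follow the two-dimensional analogue of the proof of Lemma \ref{lem:goodset}, replacing the one-dimensional concentration bound with the two-dimensional one from Lemma \ref{lem:concentration2}. The argument has three ingredients: (i) the $L^\infty$-concentration $\|X^{(p,q)}-\mathbb E[X^{(p,q)}]\|_\infty\leq n^{-1/6}$, uniformly in $p,q$; (ii) the containment $\mathbb E[X^{(p,q)}]\in L_{p,q}^n$; and (iii) the defining property of $\mathcal G_1^\varepsilon$ and $\mathcal G_2^\varepsilon$, which controls $L^\infty$-distances between arbitrary points of $L_{p,q}^n$ and $L_{p',q'}^n$.

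First, fix $\varepsilon>0$. By Lemma \ref{lem:concentration2}, there almost surely exists $N_\varepsilon\in\mathbb N$ such that $\sup_{1\leq p,q\leq m}\|X^{(p,q)}-\mathbb E[X^{(p,q)}]\|_\infty \leq n^{-1/6}<\varepsilon/2$ for all $n\geq N_\varepsilon$. Applying the triangle inequality in the $L^\infty$ norm then yields
\[
\|X^{(p,q)}-X^{(p',q')}\|_\infty \;\leq\; \|\mathbb E[X^{(p,q)}]-\mathbb E[X^{(p',q')}]\|_\infty + \varepsilon,
\]
and the reverse triangle inequality gives the matching lower bound $\|X^{(p,q)}-X^{(p',q')}\|_\infty \geq \|\mathbb E[X^{(p,q)}]-\mathbb E[X^{(p',q')}]\|_\infty - \varepsilon$.

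Next, since $\mathbb E[X^{(p,q)}]\in L_{p,q}^n$ and $\mathbb E[X^{(p',q')}]\in L_{p',q'}^n$, the defining property of $\mathcal G_1^\varepsilon$ gives, whenever $((p,q),(p',q'))\in\mathcal G_1^\varepsilon$, that $\|\mathbb E[X^{(p,q)}]-\mathbb E[X^{(p',q')}]\|_\infty < r-\varepsilon$. Combining with the upper bound from the triangle inequality gives $\|X^{(p,q)}-X^{(p',q')}\|_\infty < r$, so $h_r(X^{(p,q)},X^{(p',q')})=1$. Symmetrically, if $((p,q),(p',q'))\in\mathcal G_2^\varepsilon$, then $\|\mathbb E[X^{(p,q)}]-\mathbb E[X^{(p',q')}]\|_\infty > r+\varepsilon$, and the reverse triangle bound gives $\|X^{(p,q)}-X^{(p',q')}\|_\infty > r$, so $h_r(X^{(p,q)},X^{(p',q')})=0$.

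There is no real obstacle: all the work has already been done by Lemma \ref{lem:concentration2}, which provides both the concentration estimate and the localization of the expectation in the correct box. The only subtlety compared with the one-dimensional case is that one must make sure the ordering used to define $X^{(p,q)}$ is precisely the one for which Lemma \ref{lem:concentration2} applies, but this is exactly the lexicographic ordering introduced in Steps I--III, so no additional work is required.
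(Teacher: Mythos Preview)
Your proof is correct and follows essentially the same approach as the paper: both apply the triangle inequality in $L^\infty$, invoke Lemma \ref{lem:concentration2} for the concentration bound and the containment $\mathbb E[X^{(p,q)}]\in L_{p,q}^n$, and then use the defining property of $\mathcal G_1^\varepsilon$ and $\mathcal G_2^\varepsilon$. You spell out the reverse-triangle-inequality argument for $\mathcal G_2^\varepsilon$ explicitly, whereas the paper simply says ``a similar computation'', but the content is identical.
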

	
	\begin{proof}
		For $1\leq p,q,p',q'\leq m$,
		\begin{align*}
		&\big\|X^{(p,q)}-X^{(p',q')}\big\|_\infty \\
		&\qquad\qquad \le \big\|X^{(p,q)}-\E[X^{(p,q)}]\big\|_\infty+\big\|\E[X^{(p,q)}]-\E[X^{(p',q')}]\|_\infty+\big\|\E[X^{(p',q')}]-X^{(p',q')}\big\|_\infty.
		\end{align*}
		Lemma \ref{lem:concentration2} implies that $\E[X^{(p,q)}]\in L_{p,q}$ and $\E[X^{(p',q')}]\in L_{p',q'}$. Thus, if  $((p,q),(p',q'))\in { {\mathcal G}}_1^{\varepsilon}$, then
		\begin{align*}
		\big\|\E[X^{(p,q)}]-\E[X^{(p',q')}]\big\|_\infty< r-\varepsilon.
		\end{align*}
		Lemma \ref{lem:concentration2} also implies that a.s.~there exists $N_1$ such that , for  $n\ge N_1$,  and for all $1\leq p,q\leq m$,
		\begin{align*}
		\big\|X^{(p,q)}-\E[X^{(p,q)}]\big\|_\infty\le n^{-1/6}.
		\end{align*}
		Choosing $N_1$ such that $N_1^{-{1/6}}<\varepsilon/2$ and combining the last two estimates, we have that, for $n\ge \max\{N_0,N_1\}$, 
		\begin{align*}
		\sup_{((p,q),(p',q')) \in {\mathcal G}_1^{\varepsilon}}\|X^{(p,q)}-X^{(p',q')}\|_\infty<r,
		\end{align*}
		implying that $h_r(X^{(p,q)},X^{(p',q')})=1$ for all $((p,q),(p',q'))\in \mathcal G_1^\varepsilon$.

A similar computation shows that $h_r(X^{(p,q)},X^{(p',q')})=0$ for all $((p,q),(p',q'))\in \mathcal G_2^\varepsilon$, thus completing the proof with $N_\varepsilon=\max\{N_0,N_1\}$. 
		%
%
	\end{proof}
	
	Next, we prove a result analogous  to Lemma \ref{lem:approx}.
	
	\begin{lemma}\label{lem:approx2}
		Let $H_r$ and $H_{n,r}$ be as defined in \eqref{eqn:H} and \eqref{eqn:H_n_d2} respectively. Then, almost surely
		\begin{align*}
		\limninf	\sup_{x\in B^2}\l| H_{n,r}(x)-H_r(x)\r| = 0.
		\end{align*}
	\end{lemma}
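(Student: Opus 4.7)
The plan is to follow, step by step, the argument used for the $d=1$ analogue (Lemma \ref{lem:approx}), substituting the one-dimensional concentration and geometric bookkeeping by the two-dimensional ones developed in Lemma \ref{lem:goodset2} and in the definition of the boxes $L_{p,q}^n$. Fix $\varepsilon>0$, and for $x\in B^2$ let $(p,q)=(p,q)(x)$ denote the unique pair such that $x\in L_{p,q}$. Using \eqref{eqn:H} and \eqref{eqn:H_n_d2} and the fact that $\nu_2(L_{p',q'})=1/n$, write
\[
|H_{n,r}(x)-H_r(x)|
\ \le\ \sum_{p',q'=1}^{m}\left|\frac{1}{n}h_r(X^{(p,q)},X^{(p',q')})-\int_{L_{p',q'}}h_r(x,u)\,d\nu_2(u)\right|.
\]

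Next, split the sum on the right according to $\mathcal G_1^\varepsilon$, $\mathcal G_2^\varepsilon$, and the complement $\mathcal B^{(p,q)}$. On $\mathcal G_1^\varepsilon$, Lemma \ref{lem:goodset2} gives $h_r(X^{(p,q)},X^{(p',q')})=1$, while by definition of $\mathcal G_1^\varepsilon$ also $h_r(x,u)=1$ for every $u\in L_{p',q'}$, so the corresponding term vanishes; the same reasoning using $\mathcal G_2^\varepsilon$ kills those terms as well. Thus, almost surely, for all $n$ larger than the random $N_\varepsilon$ coming from Lemma \ref{lem:goodset2},
\[
|H_{n,r}(x)-H_r(x)|\ \le\ \frac{|\mathcal B^{(p,q)}|}{n},
\]
where $\mathcal B^{(p,q)}=\{(p',q')\,:\,((p,q),(p',q'))\notin \mathcal G_1^\varepsilon\cup \mathcal G_2^\varepsilon\}$.

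It remains to bound $|\mathcal B^{(p,q)}|$ uniformly in $(p,q)$, and this is essentially the only geometric step. If $(p',q')\in \mathcal B^{(p,q)}$, then there exist $x_0\in L_{p,q}$ and $y_0\in L_{p',q'}$ with $r-\varepsilon\le \|x_0-y_0\|_\infty \le r+\varepsilon$; since $\mathrm{diam}_\infty(L_{p,q}),\mathrm{diam}_\infty(L_{p',q'})\le 2/m$, the triangle inequality yields $r-\varepsilon-4/m\le \|x-y\|_\infty\le r+\varepsilon+4/m$ for every $(x,y)\in L_{p,q}\times L_{p',q'}$. Hence, for $n$ (equivalently $m$) large enough that $4/m<\varepsilon$, every such box $L_{p',q'}$ lies inside the $L^\infty$-shell
\[
\Omega_{2\varepsilon,r}^{(2)}\ :=\ \{(x,y)\in B^2\times B^2\,:\,r-2\varepsilon\le \|x-y\|_\infty\le r+2\varepsilon\},
\]
whose measure satisfies $(\nu_2\times \nu_2)(\Omega_{2\varepsilon,r}^{(2)})\le C_r\,\varepsilon$ for an explicit constant $C_r$ depending only on $r$ (the $L^\infty$-shell is a union of axis-aligned rectangular strips of total area $O(\varepsilon)$, which one can compute directly from the four half-planes $\{|x_i-y_i|\in [r-2\varepsilon,r+2\varepsilon]\}$).

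Since the boxes $L_{p,q}\times L_{p',q'}$ are disjoint and each has $(\nu_2\times \nu_2)$-measure $1/n^2$, this gives $|\mathcal B^{(p,q)}|/n\le C_r\,\varepsilon$ uniformly in $(p,q)$. Combining with the previous bound, we obtain
\[
\sup_{x\in B^2}|H_{n,r}(x)-H_r(x)|\ \le\ C_r\,\varepsilon
\]
almost surely for all sufficiently large $n$, and since $\varepsilon>0$ was arbitrary, the claim follows. The only place where any real work is required is the volume estimate on the $L^\infty$-shell $\Omega_{2\varepsilon,r}^{(2)}$; in higher dimensions this is what will force the constant $C_r$ to depend on $d$, but for $d=2$ the computation is elementary and linear in $\varepsilon$.
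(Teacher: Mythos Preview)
Your overall strategy is exactly what the paper does (its proof literally reads ``identical to that of Lemma~\ref{lem:approx}''), but there is a dimensional-accounting slip in your bound on $|\mathcal B^{(p,q)}|$ that breaks the argument.

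You place the product boxes $L_{p,q}\times L_{p',q'}$ inside the four-dimensional shell $\Omega_{2\varepsilon,r}^{(2)}\subset B^2\times B^2$ and use that each has $(\nu_2\times\nu_2)$-measure $1/n^2$. This yields only $|\mathcal B^{(p,q)}|/n^2\le C_r\varepsilon$, i.e.\ $|\mathcal B^{(p,q)}|\le C_r\varepsilon\, n^2$, and hence $|\mathcal B^{(p,q)}|/n\le C_r\varepsilon\, n$, which diverges with $n$. The inequality $|\mathcal B^{(p,q)}|/n\le C_r\varepsilon$ that you assert does not follow from what you wrote.

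The correct move, mirroring the one-dimensional proof, is to work with the two-dimensional \emph{slice} rather than the full four-dimensional shell. For the fixed $x\in L_{p,q}$ under consideration, your own triangle-inequality step shows that for every $(p',q')\in\mathcal B^{(p,q)}$ the entire box $L_{p',q'}$ is contained in the $L^\infty$-annulus
\[
A_x:=\{y\in B^2:r-2\varepsilon\le\|x-y\|_\infty\le r+2\varepsilon\}\subset B^2,
\]
whose $\nu_2$-measure is $O(\varepsilon)$ (the difference of two nested $L^\infty$-balls). Since the boxes $L_{p',q'}\subset B^2$ are disjoint with $\nu_2(L_{p',q'})=1/n$, this gives $|\mathcal B^{(p,q)}|/n\le \nu_2(A_x)\le C_r'\varepsilon$, uniformly in $(p,q)$, which is precisely what you need. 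The four-dimensional shell estimate you wrote down is the one used in Lemma~\ref{lem:convergencekernel2}, where one integrates over both $x$ and $y$; here you are after a sup over $x$, so the slice is required.
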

	
	\begin{proof}
		The proof here is identical to that of Lemma \ref{lem:approx}.
	\end{proof}
	
	%
	
	Finally, we prove the main result of this section.
	\begin{lemma}\label{lem:convergencekernel2}
		Let $K_{n,r}$  be as defined above. Then $K_{n,r}\to K^{2}_r$, with respect to the cut-norm, almost surely,  as $n\to\infty$.
	\end{lemma}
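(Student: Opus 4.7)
The plan is to follow the structure of the proof of Lemma \ref{lem:kernelconvergence} line for line, replacing each one-dimensional ingredient with its two-dimensional analogue. Since the cut-norm is dominated by the $L^1$-norm, it suffices to show
\[
\iint_{B^2\times B^2}|K_{n,r}(x,y)-K_r^2(x,y)|\,d\nu_2(x)d\nu_2(y)\longrightarrow 0\qquad\text{almost surely}.
\]
Fix $\varepsilon>0$. For each $x\in B^2$ let $(p(x),q(x))$ be the unique pair with $x\in L_{p(x),q(x)}$. I would then split the integral according to whether the quadruple $\bigl((p(x),q(x)),(p(y),q(y))\bigr)$ belongs to $\mathcal G_1^\varepsilon$, to $\mathcal G_2^\varepsilon$, or to the complementary ``bad'' set $\mathcal B^\varepsilon$, where $\mathcal G_1^\varepsilon$ and $\mathcal G_2^\varepsilon$ are the sets defined in \eqref{eqn:G_12}.

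On $\mathcal G_1^\varepsilon$, both $h_r(x,y)=1$ (immediate from the definition) and $h_r(X^{(p,q)},X^{(p',q')})=1$ (by Lemma \ref{lem:goodset2}) hold almost surely for all large enough $n$, so the difference $|K_{n,r}(x,y)-K_r^2(x,y)|$ reduces to the difference of the reciprocals $1/\sqrt{H_{n,r}(x)H_{n,r}(y)}$ and $1/\sqrt{H_r(x)H_r(y)}$. Using the product identity $H_r(x)=H_r(x_1)H_r(x_2)$ (and the analogous bound for $H_{n,r}$) together with the lower bound $H_r\ge r/2$ coordinatewise, we have $H_r(x),H_{n,r}(x)\ge (r/2)^2$, and Lemma \ref{lem:approx2} then yields $|K_{n,r}(x,y)-K_r^2(x,y)|\le C_r\,\varepsilon$ uniformly on $\mathcal G_1^\varepsilon$, with $C_r$ depending only on $r$. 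On $\mathcal G_2^\varepsilon$ both kernels vanish identically, and on $\mathcal B^\varepsilon$ the crude pointwise bound $|K_{n,r}|+|K_r^2|\le 6/r^2$ is all that is needed.

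The remaining task is to estimate the measure of the bad set, namely
\[
\Omega_{2\varepsilon,r}^{(2)}:=\{(x,y)\in B^2\times B^2 : r-2\varepsilon\le \|x-y\|_\infty\le r+2\varepsilon\}.
\]
Because $\|x-y\|_\infty=\max(|x_1-y_1|,|x_2-y_2|)$, the set $\Omega_{2\varepsilon,r}^{(2)}$ is contained in the union of the product slab $\{|x_1-y_1|\in[r-2\varepsilon,r+2\varepsilon]\}\cap\{|x_2-y_2|\le r+2\varepsilon\}$ and the symmetric slab obtained by swapping the coordinate roles. The one-dimensional computation used in Lemma \ref{lem:kernelconvergence} gives $(\nu_1\times\nu_1)(\{|x-y|\in[r-2\varepsilon,r+2\varepsilon]\})=O(\varepsilon)$, and the product structure of $\nu_2\times\nu_2$ therefore yields $(\nu_2\times\nu_2)(\Omega_{2\varepsilon,r}^{(2)})=O(\varepsilon)$. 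Since the cells $L_{p,q}\times L_{p',q'}$ all have $(\nu_2\times\nu_2)$-measure $n^{-2}$, this translates into the combinatorial bound $|\mathcal B^\varepsilon|/n^2=O(\varepsilon)$.

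Putting the three regional estimates together gives an $L^1$-bound of order $\varepsilon$ almost surely for all large enough $n$, and letting $\varepsilon\downarrow 0$ finishes the argument. I do not anticipate serious obstacles here: the only ingredient that is genuinely new relative to the one-dimensional proof is the bound on $(\nu_2\times\nu_2)(\Omega_{2\varepsilon,r}^{(2)})$, and that is handled essentially for free by the product form of the $L^\infty$-norm. Everything else is a transcription of the $d=1$ argument with $X^{(i)}\leftrightarrow X^{(p,q)}$ and $L_i\leftrightarrow L_{p,q}$, powered by Lemmas \ref{lem:goodset2} and \ref{lem:approx2}.
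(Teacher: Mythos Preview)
Your proposal is correct and follows essentially the same approach as the paper's proof: split into $\mathcal G_1^\varepsilon$, $\mathcal G_2^\varepsilon$, $\mathcal B^\varepsilon$, use Lemmas \ref{lem:goodset2} and \ref{lem:approx2} on the good sets, and bound the bad set via the coordinate-slab decomposition (the paper does exactly this, obtaining $|\mathcal B^\varepsilon|\le 4(2-r)\varepsilon n^2$). One tiny slip: the crude bound on $\mathcal B^\varepsilon$ should be $12/r^2$ rather than $6/r^2$, since in two dimensions $H_r(x)\ge (r/2)^2$ and $H_{n,r}(x)\ge (r/2)^2/2$ give $|K_{n,r}|\le 8/r^2$ and $|K_r^2|\le 4/r^2$; this does not affect the argument.
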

	
	\begin{proof}
		The proof is similar to that of Lemma \ref{lem:kernelconvergence}, and again we  highlight only the necessary changes. Fix $0<r<2$ and $\varepsilon>0$, and define 
		\[
		\mathcal B^\varepsilon = \l\{((p,q),(p',q'))\in [m]^4 :  ((p,q),(p',q')) \not \in \G_1^\varepsilon \cup \G_2^\varepsilon\r\}.
		\]
		
		For $x\in B^2$, define ${\mathrm p}(x),{\mathrm q}(x)$ to be the unique integers in $[m]$ such that $x\in L_{{\mathrm p}(x),{\mathrm q}(x)}^m$. A similar argument to the one in the one-dimensional case shows that  
			\begin{equation}\label{K_n,r-K_r^1_estimation=2}
			|K_{n,r}(x,y)-K_r^2(x,y)|\leq \begin{cases}
			 \frac{512\varepsilon}{3r^6} & (({\mathrm p}(x),{\mathrm q}(x)),({\mathrm p}(y),{\mathrm q}(y)))\in \G_1^\varepsilon\\
			 0 & (({\mathrm p}(x),{\mathrm q}(x)),({\mathrm p}(y),{\mathrm q}(y)))\in \G_2^\varepsilon\\
			 \frac{12}{r^2}& (({\mathrm p}(x),{\mathrm q}(x)),({\mathrm p}(y),{\mathrm q}(y)))\in \mathcal B^\varepsilon\\			 
			\end{cases}\,,
		\end{equation}
and therefore 
		\begin{equation}\label{eq:eavalvsvs}
		\begin{aligned}
			\int_{B^2\times B^2}|K_{n,r}(x,y)-K_r^2(x,y)|d\nu_2(x)d\nu_2(y) &\leq \frac{512\varepsilon}{3r^6}\cdot \frac{|\G_1^\varepsilon|}{n^2} + \frac{12}{r^2}\cdot \frac{|\mathcal{B}^\varepsilon|}{n^2}\\
			&\leq \frac{512\varepsilon}{3r^6}+ \frac{12}{r^2}\cdot \frac{|\mathcal{B}^\varepsilon|}{n^2}.
		\end{aligned}
		\end{equation}

		Thus, it remains to bound the size of $\mathcal B^\varepsilon$. Note that if $(x,y)\in B^2\times B^2$ is a pair of points such that $(({\mathrm p}(x),{\mathrm q}(x)),({\mathrm p}(y),{\mathrm q}(y)))\in \mathcal B^\varepsilon$, then $r-2\varepsilon\leq \|x-y\|_\infty \leq r+2\varepsilon$, and therefore, either $r-2\varepsilon<|x_1-y_1|<r+2\varepsilon$ or $r-2\varepsilon<|x_2-y_2|<r+2\varepsilon$. Hence,
		\[
		\begin{aligned}
			&\nu_2\times\nu_2\big(\{(x,y)\in B^2\times B^2 ~:~ (({\mathrm p}(x),{\mathrm q}(x)),({\mathrm p}(y),{\mathrm q}(y)))\in \mathcal B^\varepsilon\}\big)\\
			&\qquad\qquad \leq  \nu_2\times\nu_2\big(\{(x,y)\in B^2\times B^2 ~:~ r-2\varepsilon<|x_1-y_1|<r+2\varepsilon\big)\\
			&\qquad\qquad\qquad\qquad +  \nu_2\times\nu_2\big(\{(x,y)\in B^2\times B^2 ~:~ r-2\varepsilon<|x_2-y_2|<r+2\varepsilon\big)\\
			&\qquad\qquad \leq  2(2-r)\varepsilon + 2(2-r)\varepsilon = 4(2-r)\varepsilon.
		\end{aligned}
		\]
		Since $(L_{p,q}^m\times L_{p',q'}^m)_{p,q,p',q'=1}^m)$ are disjoint, cover $B^2\times B^2$ and each one satisfies  $\nu_2\times\nu_2(L_{p,q}^m\times L_{p',q'}^m)$ $=n^{-2}$, it follows that 
%
		\begin{align*}
			|\mathcal B^\eps|\le \frac{\nu_2\times\nu_2(\mathcal{B}_\varepsilon)}{n^{-2}} = 4(2-r)\varepsilon n^2.
		\end{align*}
%
Substituing the last bound into \eqref{eq:eavalvsvs} shows that, for all $n\geq N_\varepsilon$,
\[
	\int_{B^2\times B^2}|K_{n,r}(x,y)-K_r^2(x,y)|d\nu_2(x)d\nu_2(y) \leq \frac{512 \varepsilon}{3r^6}+ \frac{48(2-r)\varepsilon}{r^2}\,,
\]
and, since $\varepsilon>0$ was arbitrary, the result follows.
	\end{proof}
	
	\section{ Outline of the construction of $K_{n,r}$ for general $d$.}\label{sec:generald} 

	In this section we show how to construct the kernel $K_{n,r}$ for $d\ge 3$ and for $n = m^d$ for some $m\in \N$. Later, we will show how to prove the results for arbitrary values of $n\in \N$. The construction as well as the proofs are similar to the case $d=2$, just a bit more technically involved. Therefore, in this section we only wish to provide an outline for the general case, without repeating all the details and proofs.
	
	Let $X_1,\ldots, X_n$ be $n$ i.i.d.\ points, uniform in $B^d$, and denote $X_i = (X_{i,1},\ldots, X_{i,d})$. 
	As in the $d=2$ case, the tricky part here is to provide a useful ordering on the vertices. This is done in a sequence of $d$ steps as follows.
	
	\vspace{.2cm}
	\noindent{\bf Step 1:} Order $X_1, \ldots, X_n$ according to the first coordinate, and denote the result by $X^{(1)}\ldots X^{(n)}$. Thus, if  $X^{(i)}=(X_1^{(i)}, \ldots, X_d^{(i)})$ then for all $1\le i<j\le n$,
	\begin{align*}
	X_1^{(i)} \le X_1^{(j)}.
	\end{align*}

	\noindent {\bf Step 2:} 
	Take the variables $X^{(1)},\ldots, X^{(m^{d-1})}$ and order them using the second coordinates $X_2^{(1)},\ldots, X_2^{(m^{d-1})}$.
	Similarly, for all  $i_1 = 1,\ldots, m$, take the $i_1$-th collection of  the $m^{d-1}$ variables \allowbreak $X^{((i_1-1)m^{d-1}+1)},\ldots, X^{(i_1m^{d-1})}$, and order them according to the values in the second coordinate. Denote the result $X^{(i_1, i_2)}$, where $1\le i_1\le m$, and $1\le i_2\le m^{d-1}$.
	
	In the end of this sorting process, from the first step we have that
	for all $1 \le i_1 < i_1' \le m$, and for all $1\le i_2,i_2' \le m^{d-1}$,
	\[
	X_1^{(i_1, i_2)} \le X_1^{(i_1', i_2')}.
	\]
	In addition, if we fix $1\le i_1 \le m$, then, from the second step, for all $1\le i_2 < i_2' \le m^{d-1}$,
	\[
	X_2^{(i_1,i_2)} \le X_2^{(i_1,i_2')}.
	\]
	
	\vspace{.2cm}
	\noindent{\bf Step 3:} Take the variables $X^{(1,1)}, \ldots, X^{(1, m^{d-2})}$, and order them according to the third coordnates $X_3^{(1,1)}, \ldots, X_3^{(1, m^{d-2})}$.
	Similarly, for all $1\le i_1,i_2 \le m$, take the collection of the $m^{d-2}$ variables $X^{((i_1,(i_2-1)m^{d-2}+1)},\ldots, X^{(i_1,i_2m^{d-2})}$ and order them according to the third coordinate. Denote the result $X^{(i_1,i_2,i_3)}$, for $1\le i_1,i_2 \le m$, and $1\le i_3 \le m^{d-2}$.
	
	In the end of this sorting process, from the first step we have that
	for all $1 \le i_1 < i_1' \le m$, for all $1\le i_2,i_2' \le m$, and for all $1\le i_3,i_3' \le m^{d-2}$,
	\[
	X_1^{(i_1, i_2, i_3)} \le X_1^{(i_1', i_2', i_3')}.
	\]
	Next, fixing $1\le i_1 \le m$, then from the second step for all $1\le i_2 < i_2' \le m$, and for all $1 \le i_3,i_3' \le m^{d-2}$, we have
	\[
	X_2^{(i_1,i_2, i_3)} \le X_2^{(i_1,i_2', i_3')}.
	\]
	Finally, fixing $1\le i_1,i_2 \le m$, then from the third step for all $1 \le i_3 < i_3' \le m^{d-2}$ we have
	\[
	X_3^{(i_1,i_2,i_3)} \le X_3^{(i_1,i_2,i_3')}.
	\]

	\vspace{.2cm}
	\noindent{\bf Step k:}
	We keep performing these sorting procedure in a similar way. For the $k$-th step, for every choice of $1\le i_1,\ldots, i_{k-1} \le m$, we take  collections of the $m^{d-k+1}$ variables from the previous step, i.e.~$X^{(i_1,\ldots, i_{k-2},(i_{k-1}-1)m^{d-k+1}+1)}, \ldots, X^{(i_1,\ldots, i_{k-2},i_{k-1}m^{d-k+1})}$, and order them according to the $k$-th coordinate. The result is denoted $X^{(i_1,\ldots, i_k)}$. This will be done for all $k\le d$.
	
	Concluding this procedure, we take the $d$-dimensional variables $X_1,\ldots, X_n$ and order them in a sequence of $d$ steps, coordinate by coordinate, until we reach the sorted sequences $X^{(i_1,\ldots, i_d)}$, where the indices are $1\le i_1,\ldots, i_d \le m$.
	For brevity we will use $\bi = (i_1,\ldots, i_d)$, and $X^{(\bi)} = X^{(i_1,\ldots, i_d)}$. We also define $\one = (1,\ldots, 1)$ and $\bm = (m,\ldots,m)$, and we use `$\le$' to denote  lexicographic order.
	
	Similarly to the $d=2$ case, our next step is to define a useful partition of $B^d$. Suppose that $\bi = (i_1,\ldots, i_d)$ is such that $\one \le \bi \le \bm$. Using the definition of $L_{i,n}$ \eqref{eqn:L_i}, we define
	\begin{equation}\label{eqn:L_i_dd}
	L_{\bi,n} =L_{i_1,m}\times L_{i_2,m} \times\cdots\times L_{i_d,m} \subset B^d.
	\end{equation}
	In this case we have that $B^d = \bigsqcup_{\bi = \one}^{\bm} L_{\bi,n}$, and $\nu_d(L_{\bi,n}) = 1/n$. As before, we  denote $L_{\bi} = L_{\bi,n}$.
	
	Next, we define the kernels, for $x,y\in B^d$, as
	\begin{align}\label{eqn:kerneld}
	K_{n,r}(x,y)=\frac{h_{n,r}(x,y)}{\sqrt{H_{n,r}(x)H_{n,r}(y)}},
	\end{align}
	where 
	\begin{align*}
	h_{n,r}(x,y)=\sum_{\bi,\bj = \one}^{\bm} h_r(X^{(\bi)},X^{(\bj)})\one_{L_{\bi}}(x)\one_{L_{\bj}}(y),
	\end{align*}
	and
	\begin{equation}\label{eqn:H_n_d}
	H_{n,r}(x) = \int_{B^d} h_{n,r}(x,y) d\nu_d(y) = \frac{1}{n} \sum_{\bi} \one_{L_{\bi}}(x) \sum_{\bp} h_r(X^{(\bi)}, X^{(\bp)}).
	\end{equation}
	To prove that $K_{n,r} \to K_r^d$ we will have to prove  lemmas corresponding to those in Sections \ref{sec:kernel_d1} and \ref{sec:kernel_d2}. We will present the lemmas and discuss the needed adjustments for the proofs.
	
	\begin{lemma}\label{lem:spec_d}
		Let  $\mathcal K_{n,r}$ be the Hilbert-Schmidt kernel operator on $L^2(B^d,\nu_d)$ corresponding to   $K_{n,r}$ defined above. Then $\spec(\mathcal K_{n,r})=\spec(W_{n,r})$.
	\end{lemma}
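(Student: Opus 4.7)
The plan is to mimic the proofs of Lemma \ref{lem:equaleigenvalues} ($d=1$) and Lemma \ref{lem:eigenvaluessame2} ($d=2$), the only change being that single indices $i$ or pairs $(p,q)$ are now replaced by multi-indices $\bi = (i_1,\ldots,i_d)$ ranging over $\one \le \bi \le \bm$. The underlying reason the proof goes through unchanged is purely structural: both $K_{n,r}$ and the adjacency/normalized-adjacency matrices are built from the reordered sample $\{X^{(\bi)}\}$, and the cells $L_\bi$ form a product partition of $B^d$ with $\nu_d(L_\bi) = 1/n$.

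First, I would let $\widetilde A_{n,r}$ and $\widetilde W_{n,r}$ denote the adjacency and symmetrically normalized adjacency matrices of the graph on the vertex set $\{X^{(\bi)} : \one \le \bi \le \bm\}$, indexed by $d$-tuples. Since this is obtained from $A_{n,r}, W_{n,r}$ by a permutation of rows and columns, we have $\spec(\widetilde W_{n,r}) = \spec(W_{n,r})$. Writing $a_{\bi,\bj} = h_r(X^{(\bi)}, X^{(\bj)})$ and $d_\bi = \sum_\bj a_{\bi,\bj}$, the entries of $\widetilde W_{n,r}$ are $w_{\bi,\bj} = a_{\bi,\bj}/\sqrt{d_\bi d_\bj}$, and from \eqref{eqn:kerneld} and \eqref{eqn:H_n_d} one can rewrite
\[
K_{n,r}(x,y) = n \sum_{\bi,\bj=\one}^{\bm} w_{\bi,\bj}\, \one_{L_\bi}(x)\, \one_{L_\bj}(y),
\]
exactly as in the one- and two-dimensional cases.

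Next, I would establish the bijection between eigenpairs of $\K_{n,r}$ and $\widetilde W_{n,r}$. For any $f \in L^2(B^d,\nu_d)$,
\[
\K_{n,r} f(x) = \sum_{\bi = \one}^{\bm} \Bigl(\sum_{\bj = \one}^{\bm} w_{\bi,\bj} \langle f, \one_{L_\bj}\rangle \Bigr) \one_{L_\bi}(x),
\]
so that $\K_{n,r} f$ is always piecewise constant on the cells $L_\bi$. Consequently, if $\K_{n,r} f = \lambda f$ with $\lambda \ne 0$, then $f$ itself must be piecewise constant on the $L_\bi$, say $f = \sum_\bi c_\bi \one_{L_\bi}$. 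Using $\nu_d(L_\bj) = 1/n$, the eigenvalue equation reduces to
\[
\sum_{\bi=\one}^{\bm}\Bigl(\sum_{\bj=\one}^{\bm} w_{\bi,\bj} c_\bj\Bigr)\one_{L_\bi}(x) = \lambda \sum_{\bi=\one}^{\bm} c_\bi \one_{L_\bi}(x),
\]
which, since the $\one_{L_\bi}$ are linearly independent, is equivalent to $\widetilde W_{n,r} c = \lambda c$ for the vector $c = (c_\bi)$. This yields a one-to-one correspondence between nonzero eigenvalues (with multiplicities) of $\K_{n,r}$ and eigenvalues of $\widetilde W_{n,r}$.

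For the zero eigenvalue one argues separately: the subspace of functions piecewise constant on the cells $L_\bi$ is $n$-dimensional and $\K_{n,r}$-invariant, and on its orthogonal complement $\K_{n,r}$ vanishes identically (since $K_{n,r}(x,y)$ is constant in $y$ on each $L_\bj$). Therefore the total list of eigenvalues of $\K_{n,r}$ (with multiplicity within $L^2(B^d,\nu_d)$, restricted to the nontrivial invariant subspace) coincides with $\spec(\widetilde W_{n,r}) = \spec(W_{n,r})$. I do not expect any substantive obstacle here, since the product structure of the cells $L_\bi$ and the fact that $K_{n,r}$ is block-constant makes the argument a verbatim transcription of the $d=2$ proof; the only thing to be careful about is bookkeeping with multi-indices.
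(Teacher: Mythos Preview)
Your proposal is correct and takes exactly the same approach as the paper; indeed, the paper's own proof consists of the single sentence ``The proof here is identical to that of Lemma \ref{lem:eigenvaluessame2},'' and you have simply spelled out that identical argument with multi-indices. If anything, your treatment is slightly more careful in explicitly isolating the $\lambda=0$ case, which the paper's $d=1,2$ proofs gloss over.
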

	
	\begin{proof}
		The proof here is identical to that of Lemma \ref{lem:eigenvaluessame2}.
	\end{proof}

	\begin{lemma}\label{lem:unif_bound}
		Let $X^{(\bi)}$  be as defined above. Then, almost surely, there exists $N>0$ such that, for all $n\ge N$, we have,
		\begin{align*}
		\sup_{\one \le \bi \le \bm}\l\|X^{(\bi)}-\E[X^{(\bi)}]\r\|\le \frac{1}{n^{1/3d}},
		\end{align*}
		where 
		\begin{align*}
		\E[X^{(\bi)}]\in L_{\bi}.
		\end{align*}
	\end{lemma}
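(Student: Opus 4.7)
The plan is to extend the one-dimensional concentration of Lemma \ref{lem:concentration} and the two-dimensional concentration of Lemma \ref{lem:concentration2} by applying uniform order-statistic concentration recursively, one coordinate at a time. The essential observation, which already drives the $d=2$ argument, is that the Step-$k$ sort uses only the $k$-th coordinate of the sample; hence, conditionally on the $\sigma$-algebra generated by the sortings performed in Steps $1,\ldots,k-1$, the $k$-th coordinates of the $m^{d-k+1}$ points inside each of the $m^{k-1}$ current blocks form an i.i.d.\ uniform sample on $[-1,1]$. At each step of the recursion one can therefore invoke the one-dimensional bound for a conditionally uniform sample of a known size.

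I would first identify $\E[X^{(\bi)}]$ coordinate by coordinate, mirroring the derivation of \eqref{eqn:E_X_pq} in the $d=2$ case. By the tower property together with the conditional-uniformity observation, the $k$-th coordinates of the $m^{d-k+1}$ points in the block indexed by $(i_1,\ldots,i_{k-1})$ have expectations given by \eqref{eq:The_expectation_of_X^(i)}. Averaging the expectations of the $m^{d-k}$ consecutive order statistics that land in sub-block $i_k$ produces a value lying in $L_{i_k,m}$ by a direct computation, so $\E[X^{(\bi)}]\in L_{\bi}$.

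For the concentration bound, at each level $k$ and inside each of the $m^{k-1}$ blocks I would invoke Lemma \ref{lem:concentration} (in the sharpened form established in Appendix \ref{app1}) applied to a conditionally i.i.d.\ uniform sample of size $N_k := m^{d-k+1}$, obtaining a deviation bound of order $N_k^{-1/3}$ with exponentially small failure probability in $N_k$. A union bound over the $m^{k-1}$ blocks and the $d$ coordinate directions, followed by Borel--Cantelli over $n$, then yields almost surely, for all large enough $n$ and every $1\le k\le d$,
\[
\sup_{\one\le\bi\le\bm}\l|X_k^{(\bi)}-\E[X_k^{(\bi)}]\r|\ \le\ N_k^{-1/3}\ \le\ m^{-1/3}\ =\ n^{-1/(3d)},
\]
with the worst case occurring at $k=d$, where $N_d=m$. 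Taking the maximum over $k$ gives the claimed $\|\cdot\|_\infty$ bound.

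The main obstacle I anticipate is the bookkeeping required to run Borel--Cantelli uniformly over the $m^{d-1}$ conditional blocks appearing at the deepest level $k=d$. This forces the tail bound underlying Lemma \ref{lem:concentration} to be exponentially small in a positive power of the sample size, so that the union bound over these $\asymp n^{(d-1)/d}$ blocks and the sum over $n$ both remain summable. Provided Appendix \ref{app1} supplies such a tail bound (as it does), no conceptually new ingredient beyond the $d=2$ argument is needed.
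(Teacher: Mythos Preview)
Your overall plan coincides with the paper's: compute $\E[X_k^{(\bi)}]$ by averaging the Step-$k$ order-statistic means over the $i_k$-th sub-block, and for concentration apply Lemma~\ref{lem:concentration} to the $m^{d-k+1}$ conditionally uniform points in each Step-$k$ block, then union-bound over blocks and coordinates and run Borel--Cantelli. Your remark that the underlying tail must be exponentially small for the union bound over $m^{d-1}$ blocks to be summable is well taken; the paper's Appendix~\ref{app1} uses exactly this.

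There is, however, a gap in the displayed inequality $\sup_{\bi}\l|X_k^{(\bi)}-\E[X_k^{(\bi)}]\r|\le N_k^{-1/3}$. Lemma~\ref{lem:concentration} controls $\l|X_k^{(i_1,\ldots,i_{k-1},j)}-\E[X_k^{(i_1,\ldots,i_{k-1},j)}]\r|$ for each \emph{fixed} position $j$ in the Step-$k$ ordering. But for $k<d$ the variable $X_k^{(\bi)}$ equals $X_k^{(i_1,\ldots,i_{k-1},j)}$ for a \emph{random} index $j\in\{(i_k-1)m^{d-k}+1,\ldots,i_km^{d-k}\}$ determined by the later sorts on coordinates $k+1,\ldots,d$, whereas $\E[X_k^{(\bi)}]$ is, as you yourself observe in the expectation paragraph, the \emph{average} of the means over that sub-block. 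These two centres can differ by up to the width of $L_{i_k,m}$, i.e.\ $2/m$. The paper inserts precisely this triangle inequality:
\[
\l|X_k^{(\bi)}-\E[X_k^{(\bi)}]\r|\le \l|X_k^{(i_1,\ldots,i_{k-1},j)}-\E[X_k^{(i_1,\ldots,i_{k-1},j)}]\r|+\l|\E[X_k^{(i_1,\ldots,i_{k-1},j)}]-\E[X_k^{(\bi)}]\r|,
\]
bounding the first term by $m^{-(d-k+1)/3}$ and the second by $O(1/m)$ since both expectations lie in $L_{i_k,m}$. The extra $O(1/m)=O(n^{-1/d})$ is harmless against $n^{-1/(3d)}$, so once you add this step your argument is complete and matches the paper's.
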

	
	\begin{proof}
This bound can be proved using \cite[Theorem 1.1]{shor}. But for  completeness we give a proof in Appendix  \ref{app1}. We will explain the steps needed to bound   $\E[X^{(\bi)}]$.

		With, as before, $\bi  = (i_1,\ldots, i_d)$,  for every $1\le k \le d$ we need to show that 
		\[
		\E[X^{(\bi)}_k] \in L_{i_k,m}.
		\]
		
		Denote by $\underline X_k$ the collection of all variables $\{ X_{i,j}\}_{1\le i \le n, 1\le j \le k}$. Notice that our sorting algorithm is such that {\it given} $\underline X_k$ we can apply steps 1 through k above and thus the values of $X_j^{(i_1,\ldots, i_k)}$ for all $1\le i_1,\ldots,i_{k-1}\le m$, $1\le i_k \le m^{d-k+1}$, and $1\le j \le d$ are known.
		
		Next, fix $1\le k \le d$, and $1\le i_1, \ldots, i_k \le m$. Recall that given $\underline X_k$, 
		the set of $m^{d-k}$ variables $\l\{ X_k^{(i_1,\ldots, i_k, i_{k+1},\ldots, i_d)}\r\}_{1\le i_{k+1},\ldots, i_d \le m}$ is retrieved from the set
		\[
		 \l\{ X_k^{(i_1,\ldots, i_{k-1}, (i_k-1) m^{d-k}+1)}, \ldots, X_k^{(i_1,\ldots, i_{k-1}, i_km^{d-k})}  \r\}
		 \]
		  by a sequence of random permutations (given in steps $k+1,\ldots,d$) where all the permutations are determined by independent sequences of i.i.d.\ variables. Therefore, each individual variable $X_k^{(i_1,\ldots, i_k, i_{k+1},\ldots, i_d)}$ can take the value of any of the variables $X_k^{(i_1,\ldots, i_{k-1}, (i_k-1) m^{d-k}+j)}$
		for $j=1,\ldots, m^{d-k}$, with equal probability. Thus,
		\begin{equation}\label{eqn:mean_Xk}
		\E[X_k^{(\bi)}] = \E\l[\E\l[ X^{(\bi)}_k \given \underline X_k\r] \r] = \frac{1}{m^{d-k}} \sum_{j=1}^{m^{d-k}} \E[X_k^{(i_1,\ldots, i_{k-1}, (i_k-1) m^{d-k}+j)}].
		\end{equation}
		Next, recall that, as described in step k,   $X_k^{(i_1\ldots, i_{k-1},1)},\ldots, X_k^{(i_1,\ldots, i_{k-1}, m^{d-k+1})}$ are the order statistics of $m^{d-k+1}$ i.i.d.\ variables, uniform in $[-1,1]$. Therefore,
		\[
		\E\l[ X_k^{(i_1\ldots, i_{k-1},j)} \r] = -1 + \frac{2j}{m^{d-k+1}+1}.
		\]
		Putting this into \eqref{eqn:mean_Xk}, we have
		\[
		\E[X_k^{(\bi)}] = -1 + \frac{2i_km^{d-k}-m^{d-k}+1}{m^{d-k+1}+1}.
		\]
		All that remains to verify tis hat the last value is indeed in $L_{i_k,m}$, and this easy step completes the proof.
	\end{proof}

	
	For the next step, take $\mathcal G_1^\varepsilon$ and $\mathcal G_2^\varepsilon$  as in \eqref{eqn:G_12}.
	
	\begin{lemma}
		Almost surely, there exists (random) $N_\varepsilon>0$ such that, for all $n\ge N_\varepsilon$,  the following two statements are true:
		\begin{enumerate}
			\item If $(\bi,\bj)\in {\mathcal G}_1^\varepsilon$ then $h_r(X^{(\bi)},X^{(\bj)})=1$.
			\item If $(\bi,\bj)\in {\mathcal G}_2^\varepsilon$ then $h_r(X^{(\bi)},X^{(\bj)})=0$.
		\end{enumerate}
	\end{lemma}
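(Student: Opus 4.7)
The plan is to mimic almost verbatim the two-dimensional argument from Lemma \ref{lem:goodset2}, replacing pair indices by multi-indices $\bi,\bj\in\{1,\ldots,m\}^d$ and the Euclidean norm with $\|\cdot\|_\infty$. The three ingredients we already have are: (i) Lemma \ref{lem:unif_bound}, which says that almost surely, for all sufficiently large $n$, $\sup_{\bi}\|X^{(\bi)}-\E[X^{(\bi)}]\|_\infty \le n^{-1/(3d)}$; (ii) the fact, also in Lemma \ref{lem:unif_bound}, that $\E[X^{(\bi)}]\in L_\bi$; and (iii) the defining property of $\mathcal{G}_1^\varepsilon,\mathcal{G}_2^\varepsilon$ in \eqref{eqn:G_12}, which translates membership of $(\bi,\bj)$ into a uniform strict inequality $\|x-y\|_\infty\lessgtr r\pm\varepsilon$ for every $(x,y)\in L_\bi\times L_\bj$.

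First I would fix $\varepsilon>0$ and, using Lemma \ref{lem:unif_bound}, pick $N_\varepsilon$ large enough that $N_\varepsilon^{-1/(3d)}<\varepsilon/2$ and the uniform concentration bound holds for all $n\ge N_\varepsilon$. Then for any $\bi,\bj$ I would apply the triangle inequality in the $L^\infty$ norm:
\[
\|X^{(\bi)}-X^{(\bj)}\|_\infty \le \|X^{(\bi)}-\E[X^{(\bi)}]\|_\infty + \|\E[X^{(\bi)}]-\E[X^{(\bj)}]\|_\infty + \|\E[X^{(\bj)}]-X^{(\bj)}\|_\infty,
\]
and bound the first and third terms by $\varepsilon/2$ each.

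For case (1), if $(\bi,\bj)\in\mathcal{G}_1^\varepsilon$, then since $\E[X^{(\bi)}]\in L_\bi$ and $\E[X^{(\bj)}]\in L_\bj$, the defining property of $\mathcal{G}_1^\varepsilon$ gives $\|\E[X^{(\bi)}]-\E[X^{(\bj)}]\|_\infty<r-\varepsilon$, so the triangle inequality yields $\|X^{(\bi)}-X^{(\bj)}\|_\infty < r-\varepsilon+\varepsilon = r$, hence $h_r(X^{(\bi)},X^{(\bj)})=1$. For case (2), the same triangle inequality gives the lower bound $\|X^{(\bi)}-X^{(\bj)}\|_\infty \ge \|\E[X^{(\bi)}]-\E[X^{(\bj)}]\|_\infty - \varepsilon > (r+\varepsilon)-\varepsilon = r$, so $h_r(X^{(\bi)},X^{(\bj)})=0$.

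There is no real obstacle here: the only substantive work has been done in Lemma \ref{lem:unif_bound}, which both provides the concentration rate and locates the expectations inside the correct boxes $L_\bi$. Everything else is the same triangle-inequality argument already used for $d=1$ and $d=2$; the only cosmetic change is that now the indices are $d$-tuples and the norm is $\|\cdot\|_\infty$, which interacts well with the product structure of $L_\bi=L_{i_1,m}\times\cdots\times L_{i_d,m}$.
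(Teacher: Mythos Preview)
Your proposal is correct and follows exactly the same approach as the paper: the paper's own proof simply states that, using Lemma \ref{lem:unif_bound}, the argument is identical to that of Lemma \ref{lem:goodset2}, which is precisely the triangle-inequality argument you have spelled out.
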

	
	\begin{proof}
		Using Lemma \ref{lem:unif_bound}, the proof is identical to that of Lemma \ref{lem:goodset2}.
	\end{proof}
	
	\begin{lemma}\label{lem:hn}
		Let $H_r$ and $H_{n,r}$ be as defined above in \eqref{eqn:H} and \eqref{eqn:H_n_d}. Then, almost surely,
		\begin{align*}
		\limninf	\sup_{x\in B^d}\l| H_{n,r}(x)-H_r(x)\r| = 0.
		\end{align*}
	\end{lemma}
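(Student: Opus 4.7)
The plan is to imitate the proof of Lemma \ref{lem:approx} almost verbatim, replacing one-dimensional intervals by $d$-dimensional boxes and invoking the immediately preceding lemma (the general-$d$ analogue of Lemma \ref{lem:goodset2}). The one new ingredient that must be established is a volume estimate for the ``shell'' $\{y\in B^d : \|x-y\|_\infty\in[r-\delta,r+\delta]\}$ in the $L^\infty$ norm.

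\textbf{Step 1 (Pointwise decomposition).} Fix $\varepsilon>0$ and $x\in B^d$. Let $\bi$ denote the unique multi-index with $x\in L_{\bi}$. Using \eqref{eqn:H} and \eqref{eqn:H_n_d}, together with $\nu_d(L_{\bj})=1/n$, the triangle inequality gives
\[
|H_{n,r}(x)-H_r(x)|\ \le\ \sum_{\bj=\one}^{\bm}\left|\tfrac{1}{n}h_r(X^{(\bi)},X^{(\bj)})-\int_{L_{\bj}}h_r(x,u)\,d\nu_d(u)\right|.
\]

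\textbf{Step 2 (Killing the good pairs).} By the preceding lemma, almost surely there is some $N_\varepsilon$ such that for $n\ge N_\varepsilon$: if $(\bi,\bj)\in\mathcal{G}_1^\varepsilon$ then $h_r(X^{(\bi)},X^{(\bj)})=1$, and also $h_r(x,u)=1$ for every $u\in L_{\bj}$ by the definition of $\mathcal{G}_1^\varepsilon$; while if $(\bi,\bj)\in\mathcal{G}_2^\varepsilon$ both quantities vanish. In either case the corresponding summand is zero. Letting $\mathcal{B}^{(\bi)}=\{\bj:(\bi,\bj)\notin\mathcal{G}_1^\varepsilon\cup\mathcal{G}_2^\varepsilon\}$, and using $|h_r|\le 1$, we therefore obtain
\[
|H_{n,r}(x)-H_r(x)|\ \le\ \frac{2\,|\mathcal{B}^{(\bi)}|}{n}.
\]

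\textbf{Step 3 (Bounding $|\mathcal{B}^{(\bi)}|$).} If $\bj\in\mathcal{B}^{(\bi)}$ then $L_{\bi}\times L_{\bj}$ contains points $(x_1,y_1),(x_2,y_2)$ with $\|x_1-y_1\|_\infty\ge r-\varepsilon$ and $\|x_2-y_2\|_\infty\le r+\varepsilon$. Since every $L_{\bj}$ has diameter $2/m=2n^{-1/d}$, for $n$ large enough that $2n^{-1/d}<\varepsilon$, this forces $L_{\bj}\subset\Omega_{2\varepsilon,r}^{(x)}:=\{y\in B^d:r-2\varepsilon\le \|x-y\|_\infty\le r+2\varepsilon\}$. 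Because the $L_{\bj}$ are disjoint and each has measure $1/n$, it follows that
\[
|\mathcal{B}^{(\bi)}|\ \le\ n\cdot \nu_d\bigl(\Omega_{2\varepsilon,r}^{(x)}\bigr).
\]

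\textbf{Step 4 (Shell volume).} It remains to show $\nu_d(\Omega_{2\varepsilon,r}^{(x)})\le C_{d,r}\varepsilon$ uniformly in $x\in B^d$. The ball $\{y:\|x-y\|_\infty\le s\}\cap B^d$ is an axis-aligned box whose $k$-th side length $\ell_k(s)$ is a piecewise linear, non-decreasing function of $s$ on $[0,2]$ with derivative bounded by $1$, and $\ell_k(s)\le 2$. A telescoping identity
\[
\prod_{k=1}^{d}\ell_k(r+2\varepsilon)-\prod_{k=1}^{d}\ell_k(r-2\varepsilon)\ =\ \sum_{k=1}^{d}\Bigl(\prod_{j<k}\ell_j(r-2\varepsilon)\Bigr)\bigl(\ell_k(r+2\varepsilon)-\ell_k(r-2\varepsilon)\bigr)\Bigl(\prod_{j>k}\ell_j(r+2\varepsilon)\Bigr)
\]
combined with $\ell_j\le 2$ and $\ell_k(r+2\varepsilon)-\ell_k(r-2\varepsilon)\le 4\varepsilon$ yields the desired $O(\varepsilon)$ bound with a constant depending only on $d$ and $r$.

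\textbf{Step 5 (Conclusion).} Combining Steps 2--4, we obtain $\sup_{x\in B^d}|H_{n,r}(x)-H_r(x)|\le 2C_{d,r}\varepsilon$ for all $n\ge N_\varepsilon$, almost surely. Since $\varepsilon>0$ was arbitrary, the limit superior vanishes almost surely. The main (and only new) technical point is the shell-volume estimate in Step 4; everything else is a direct transcription of the one- and two-dimensional arguments.
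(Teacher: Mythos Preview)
Your proof is correct and follows exactly the approach of the paper, which simply declares the argument identical to the one-dimensional Lemma~\ref{lem:approx}; you have merely written out the obvious $d$-dimensional adaptation, including the shell-volume bound that the paper leaves implicit. One small slip in Step~4: the side-length function $\ell_k(s)=\bigl|[x_k-s,x_k+s]\cap[-1,1]\bigr|$ has Lipschitz constant~$2$ rather than~$1$ (both endpoints of the interval can move with $s$), so $\ell_k(r+2\varepsilon)-\ell_k(r-2\varepsilon)\le 8\varepsilon$; this only affects the value of $C_{d,r}$ and not the argument.
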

	
	\begin{proof}
		The proof here is identical to that of Lemma \ref{lem:approx}.
	\end{proof}
	
	\begin{lemma}\label{lem:converge_d}
		Let $0<r<2$, and $K_{n,r}$ be as defined above. Then $K_{n,r}\to K_r^{d}$ with respect to the cut-norm, almost surely, as $n\to\infty$.
	\end{lemma}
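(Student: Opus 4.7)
The plan is to follow exactly the same three–region decomposition used in the proofs of Lemmas \ref{lem:kernelconvergence} and \ref{lem:convergencekernel2}, adapting only the combinatorial bookkeeping to $d$ dimensions. Since the cut-norm is dominated by the $L^1$-norm, it suffices to show that
\[
\int_{B^d\times B^d}|K_{n,r}(x,y)-K_r^d(x,y)|\,d\nu_d(x)\,d\nu_d(y)\xrightarrow[n\to\infty]{\text{a.s.}}0.
\]
Fix $\varepsilon>0$. For each $x\in B^d$ let $\bi(x)$ denote the unique multi-index with $x\in L_{\bi(x)}$, and set
\[
\mathcal B^\varepsilon=\bigl\{(\bi,\bj)\in[m]^{2d}:(\bi,\bj)\notin\mathcal G_1^\varepsilon\cup\mathcal G_2^\varepsilon\bigr\}.
\]

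The first step is a pointwise estimate analogous to \eqref{K_n,r-K_r^1_estimation=2}. On $\mathcal G_2^\varepsilon$ both kernels vanish by the preceding lemma. On $\mathcal G_1^\varepsilon$ both numerators equal $1$, so the difference reduces to $|1/\sqrt{H_{n,r}(x)H_{n,r}(y)}-1/\sqrt{H_r(x)H_r(y)}|$; using $H_r\ge (r/2)^d$ from the product formula $H_r(x)=\prod_i H_r(x_i)$, together with the uniform approximation $\sup_x|H_{n,r}(x)-H_r(x)|\le\varepsilon$ provided by Lemma \ref{lem:hn}, I obtain a bound of the form $C_d(r)\,\varepsilon$, where $C_d(r)$ depends only on $d$ and $r$. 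On $\mathcal B^\varepsilon$ both kernels are individually bounded by $(2/r)^d$, giving a uniform bound $2\,(2/r)^d$.

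The key combinatorial estimate is the control of $|\mathcal B^\varepsilon|$. If $(\bi(x),\bi(y))\in\mathcal B^\varepsilon$ then $r-2\varepsilon<\|x-y\|_\infty<r+2\varepsilon$; since $\|x-y\|_\infty=\max_k|x_k-y_k|$, for the maximum to lie in a band of width $4\varepsilon$ around $r$, at least one coordinate $k$ must satisfy $|x_k-y_k|\ge r-2\varepsilon$, and for the coordinate that realises the maximum we must have $|x_k-y_k|\in(r-2\varepsilon,r+2\varepsilon)$. A union bound over the $d$ coordinates gives
\[
(\nu_d\times\nu_d)\bigl\{(x,y):(\bi(x),\bi(y))\in\mathcal B^\varepsilon\bigr\}
\;\le\; d\cdot\bigl(2(2-r)\varepsilon\bigr)\cdot 2^{d-1}\;=\;d\,2^d(2-r)\varepsilon.
\]
Since the cells $L_{\bi}\times L_{\bj}$ are disjoint with common measure $n^{-2}$, this yields $|\mathcal B^\varepsilon|\le d\,2^d(2-r)\varepsilon\,n^2$.

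Combining the three cases and integrating, the total $L^1$-distance is bounded, for $n\ge N_\varepsilon$, by a quantity of the form $C_d(r)\,\varepsilon$ independent of $n$. Since $\varepsilon>0$ was arbitrary, the result follows. The only step that requires more care than in $d=2$ is the volume estimate for $\mathcal B^\varepsilon$; in higher dimension the $L^\infty$-band $\{r-2\varepsilon<\|x-y\|_\infty<r+2\varepsilon\}$ is a union of $d$ coordinate-slabs, and the main obstacle is correctly accounting for this factor of $d$ (and a polynomial factor in $1/r$ from the lower bound on $H_r$), but since both are constants independent of $n$ they do not affect the limit.
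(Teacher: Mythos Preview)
Your proposal is correct and follows essentially the same three-region decomposition as the paper's proof, which likewise reduces to the $L^1$ estimate, quotes Lemma~\ref{lem:hn} for the $\mathcal G_1^\varepsilon$ region, and bounds $|\mathcal B^\varepsilon|$ via a union bound over the $d$ coordinate slabs. One small remark: the extra factor $2^{d-1}$ in your volume estimate for $\mathcal B^\varepsilon$ is unnecessary---since $\nu_d$ is already the \emph{probability} measure on $B^d$, the union bound gives $(\nu_d\times\nu_d)(\mathcal B^\varepsilon)\le 2d(2-r)\varepsilon$ directly (the paper obtains $|\mathcal B^\varepsilon|\le 2d(2-r)\varepsilon\,n^2$)---but as this only inflates a constant independent of $n$, it does not affect the argument.
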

	
	\begin{proof}
		The proof is similar to that of Lemma \ref{lem:convergencekernel2}, and we will only highlight the required updates. We use  similar notation as in the proof of Lemma \ref{lem:convergencekernel2}. 
		Therefore, \eqref{K_n,r-K_r^1_estimation=2} is replaced by
		\begin{equation}\label{K_n,r-K_r^1_estimation2}
		|K_{n,r}(x,y)-K_r^2(x,y)|\leq \begin{cases}
		\frac{2^{3(d+1)}\varepsilon}{3r^{3d}} & ((\bi(x),\bi(y))\in \G_1^\varepsilon\\
		0 & ((\bi(x),\bi(y))\in \G_2^\varepsilon\\
		\frac{3\cdot 2^d}{r^d}& ((\bi(x),\bi(y))\in \mathcal B^\varepsilon\\			 
		\end{cases}\,,
		\end{equation}
		and so 
		\begin{equation}\label{eq:eavalvsvs2}
		\begin{aligned}
		\int_{B^2\times B^2}|K_{n,r}(x,y)-K_r^2(x,y)|d\nu_2(x)d\nu_2(y) &\leq \frac{2^{3(d+1)}\varepsilon}{3r^{3d}} \cdot \frac{|\G_1^\varepsilon|}{n^2} + \frac{3\cdot 2^d}{r^d}\cdot \frac{|\mathcal{B}^\varepsilon|}{n^2}\\
		&\leq \frac{2^{3(d+1)}\varepsilon}{3r^{3d}}+ \frac{3\cdot 2^d}{r^d}\cdot \frac{|\mathcal{B}^\varepsilon|}{n^2}.
		\end{aligned}
		\end{equation}
		In addition, we have, for $\varepsilon>0$, 
		\begin{align*}
		|\mathcal B^{\varepsilon}|\le 2d (2-r)\varepsilon n^2.
		\end{align*}
Substituting the last bound into \eqref{eq:eavalvsvs2} shows that, for all $n\geq N_\varepsilon$,
\[
\int_{B^2\times B^2}|K_{n,r}(x,y)-K_r^2(x,y)|d\nu_2(x)d\nu_2(y) \leq \frac{2^{3(d+1)}\varepsilon}{3r^{3d}}+ \frac{6d(2-r)2^d}{r^d}\varepsilon\,.
\]
		Observe that in the special case  $d=2$ we obtain  the bounds derived in the proof of Lemma \ref{lem:convergencekernel2}.  Since $\varepsilon>0$ is arbitrary, we are done.
	\end{proof}

	\section{Proofs of Theorems \ref{thm:limiteigenvalue} and \ref{thm:1to2}}\label{sec:thm1}

	In this section we finally complete the proofs of Theorems \ref{thm:limiteigenvalue} and \ref{thm:1to2} using the eigenvalue interlacing theorem,  see Theorem 4.3.28 in \cite{horn}.
	
	\begin{thm}[Eigenvalue Interlacing Theorem]\label{ft} Suppose $A$ is a real symmetric $n \times n$ matrix. Let $B$ be a $m\times m$ principal submatrix (obtained by deleting both the $i$-th row and the $i$-th column for some values of $i$). Suppose $A$ has eigenvalues $\alpha_{1}\ge\cdots\ge \alpha_n$ and $B$ has eigenvalues $\beta_1\ge \cdots \ge \beta_m$. Then, for every $1\leq k\leq m$
		\begin{align*}
		\alpha_{k+n-m}\le \beta_k\le \alpha_k\,.
		\end{align*}
	\end{thm}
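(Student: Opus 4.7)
The plan is to derive both inequalities from the Courant--Fischer min-max characterization of eigenvalues of a real symmetric matrix: for an $n\times n$ real symmetric matrix $A$ with ordered eigenvalues $\alpha_1\ge\cdots\ge\alpha_n$,
\[
	\alpha_k \;=\; \max_{\substack{S\subset\R^n\\ \dim S=k}}\,\min_{\substack{x\in S\\ x\ne 0}}\frac{\langle Ax,x\rangle}{\langle x,x\rangle}
	\;=\;\min_{\substack{S\subset\R^n\\ \dim S=n-k+1}}\,\max_{\substack{x\in S\\ x\ne 0}}\frac{\langle Ax,x\rangle}{\langle x,x\rangle}.
\]
The key structural observation is that $B$, being the principal submatrix of $A$ indexed by some set $J\subset\{1,\ldots,n\}$ with $|J|=m$, is simply the compression of $A$ to the coordinate subspace $V:=\mathrm{span}\{e_j:j\in J\}\subset\R^n$. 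Formally, the natural inclusion $\iota:\R^m\hookrightarrow V$ is an isometry satisfying $\langle B y,y\rangle_{\R^m} = \langle A\iota(y),\iota(y)\rangle_{\R^n}$ for every $y\in\R^m$, so the Rayleigh quotient of $B$ on $\R^m$ coincides with the Rayleigh quotient of $A$ restricted to $V$.

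The first step is the upper bound $\beta_k\le\alpha_k$. Applying Courant--Fischer to $B$ in its max-min form, and then translating via $\iota$,
\[
	\beta_k \;=\; \max_{\substack{T\subset V\\ \dim T=k}}\,\min_{\substack{x\in T\\ x\ne 0}}\frac{\langle Ax,x\rangle}{\langle x,x\rangle}\;\le\;\max_{\substack{T\subset\R^n\\ \dim T=k}}\,\min_{\substack{x\in T\\ x\ne 0}}\frac{\langle Ax,x\rangle}{\langle x,x\rangle}\;=\;\alpha_k,
\]
where the inequality holds because enlarging the family of admissible subspaces can only increase the maximum.

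The second step is the lower bound $\alpha_{k+n-m}\le\beta_k$, which uses the dual min-max form. Writing
\[
	\beta_k \;=\; \min_{\substack{T\subset V\\ \dim T=m-k+1}}\,\max_{\substack{x\in T\\ x\ne 0}}\frac{\langle Ax,x\rangle}{\langle x,x\rangle}\;\ge\;\min_{\substack{T\subset\R^n\\ \dim T=m-k+1}}\,\max_{\substack{x\in T\\ x\ne 0}}\frac{\langle Ax,x\rangle}{\langle x,x\rangle}\;=\;\alpha_{n-(m-k+1)+1}\;=\;\alpha_{k+n-m},
\]
the inequality coming from the fact that enlarging the family of admissible subspaces can only decrease the minimum, and the last equality being the simple index identity $n-(m-k+1)+1 = k+n-m$. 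Combining the two steps yields $\alpha_{k+n-m}\le\beta_k\le\alpha_k$ for every $1\le k\le m$.

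There is no real obstacle here: the proof is essentially a bookkeeping exercise with Courant--Fischer. The only care required is the correct choice of the max-min characterization for the upper bound versus the min-max characterization for the lower bound, together with verifying the index arithmetic in the dual bound. Since the theorem is quoted from \cite{horn}, it would be reasonable to simply cite it; for completeness, the two-step argument above can be written out in at most half a page.
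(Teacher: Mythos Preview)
Your proof is correct and follows the standard Courant--Fischer argument. Note, however, that the paper does not actually prove this theorem: it is stated as a known result with a citation to \cite{horn} (Theorem 4.3.28), so there is no paper proof to compare against. Your write-up is exactly the kind of short self-contained argument one could include for completeness, and as you yourself observe, simply citing the reference would also be entirely acceptable here.
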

	
	\begin{proof}[Proof of Theorem \ref{thm:limiteigenvalue}]
		The case of $d=1$ follows from the discussion in Sections \ref{sec:eigenvalue} and \ref{sec:d1} . In fact, from Lemma \ref{lem:equaleigenvalues}, we have that $\spec(W_{n,r}) = \spec(\K_{n,r})$, from Lemma \ref{lem:kernelconvergence} we have that $K_{n,r} \to K^1_r$ almost surely in cut norm, and, from Lemma \ref{lem:alleigenvalues}, we have that,   with the exception of the eigenvalues  $1/2$ and $1$,  all eigenvalues of $\K_r^1$ lie in $(-0.3,0.3)$. Finally, applying  Lemma \ref{lem:szegedy} proves the result. 
		
		For $d\ge 2$, using Lemmas \ref{lem:spec_d},\ref{lem:converge_d},  \ref{lem:alleigenvalues} and Lemma \ref{lem:szegedy} implies the result for all $n=m^d$. We are left to prove that the statement holds for any sequence of $n$.
		
		Suppose that $n>0$ is not in the form $n=m^d$. Then there exists $m>0$ such that
		$(m-1)^d< n < m^d$. Let $\lambda_{1,n}\ge \cdots \ge \lambda_{n,n}$ be the eigenvalues of $W_{n,1}$. Then Theorem \ref{ft} implies that 
		\begin{align*}
		\{j \; :\; |\lambda_{j,(m-1)^d}|>\lambda\}\subseteq \{j \; :\; |\lambda_{j,n}|>\lambda\}\subseteq  \{j \; :\; |\lambda_{j,m^d}|>\lambda\}.
		\end{align*}
		Taking $m\to \infty$, and using the convergence of the eigenvalues for $n=m^d$, concludes the proof.	 
	\end{proof}

	\begin{proof}[Proof of Theorem \ref{thm:1to2}]
		Combine Lemmas \ref{lem:k_L},  \ref{lem:spec_d}, \ref{lem:converge_d},  and \ref{lem:szegedy}   as in the proof of Theorem \ref{thm:limiteigenvalue}. 		
		\end{proof}
	
	\begin{proof}[Proof of Theorem \ref{thm:0to1}]
		
		Combine Lemmas \ref{lem:r<1}, \ref{lem:spec_d}, \ref{lem:converge_d} and \ref{lem:szegedy}, as in the proof of Theorem \ref{thm:limiteigenvalue}.
	\end{proof}

\begin{proof}[Proof of Corollary \ref{cor}]
	Recall that $\gamma_2^{(n)}=1-\lambda_2^{(n)}$, where $\lambda_2^{(n)}$ is the second largest eigenvalue of $W_{n,r}$. The proof is in three parts, one for each of the claims of the Corollary.
	
\paragraph{\bf Proof of first claim.} Let $r=1$.  Theorem \ref{thm:limiteigenvalue}-(2) implies that, for every $\epsilon>0$, almost surely there exists $N>0$ such that, for all $n\ge N$,
\begin{align*}
\lambda_2^{(n)}\in \l(\frac{1}{2}-\eps,\, \frac{1}{2}+\eps\r).
\end{align*}
Hence the result, as $\eps$ is arbitrary and $\gamma_2^{(n)}=1-\lambda_2^{(n)}$.
	
\paragraph {\bf Proof of second claim.} Let $r\in (1,2)$. Let $\lambda_2$ be the second largest eigenvalue of $\mathcal K_r^d$. 
Lemmas \ref{lem:k_L} and \ref{lem:lessthan1} imply that there exists $\epsilon>0$ such that
	\begin{align*}
	\eps < \lambda_2<\frac{1}{2}-\epsilon.
	\end{align*}
	Therefore, by Lemmas \ref{lem:spec_d}, \ref{lem:converge_d} and \ref{lem:szegedy}, almost surely there exists $N>0$ such that, for all $n\ge N$,
	\begin{align}		\frac{\eps}{2} \le \lambda_2^{(n)}\le \frac{1}{2}-\frac{\epsilon}{2}.
	\end{align} 
This implies that, for $n \ge N$,
	\begin{align*}
	\frac{1}{2}+\frac{\epsilon}{2}\le \gamma_2^{(n)}\le 1-\frac{\epsilon}{2}.
	\end{align*}
	Hence the result.

	\paragraph {\bf Proof of third claim.} Let $r\in (0,1)$. Lemmas \ref{lem:r<1} and \ref{lem:lessthan1} imply that there exists $\epsilon>0$ such that 
		\begin{align*}
			\frac{1}{2} +{\eps} <  \lambda_2 < 1-\eps.
		\end{align*}
	By Lemmas  \ref{lem:spec_d}, \ref{lem:converge_d} and \ref{lem:szegedy}, almost surely there exists $N_3>0$ such that, for all $n>N$, 
	\begin{align}
		\frac{1}{2} + \frac{\eps}{2} \le \lambda_2^{(n)}\le 1-\frac{\eps}{2},
	\end{align}
	which implies that 
		\begin{align*}
		\frac{\epsilon}{2} \le \gamma_2^{(n)}\le \frac{1}{2}-\frac{\epsilon}{2}.
	\end{align*}
	Hence the result.
\end{proof}

\section{Conclusion}
We have  shown that, almost surely, the second largest eigenvalue of $W_{n,r}$ is larger (smaller) than $1/2$ if $0<r<1$ (respectively, $1<r<2$) for all large $n$. We also proved that, if $r=1$, then $W_{n,r}$ has at least $\binom{d}{k}$ many eigenvalues around $1/2^k$. In Section \ref{sec:eigenvalue}, in order to study  the eigenvalues of $W_{n,r}$, we studied the eigenvalues of the limiting operator $\K_r^d$ . We proved that $\K_r^d$ is a self-adjoint and compact operator with the largest eigenvalue $1$, and the second largest eigenvalue is larger (smaller) than $1/2$ for $0<r<1$ (respectively, $1<r<2$). We conjecture that  the second largest eigenvalue of $\K_r^d$ is both continuous and monotonically decreasing in $0<r<2$.

In the above discussion two vertices in the graph are connected if they lie in a cube of side-length $r$. 
We note that our results can be extended to the case where the cube is replaced by general box. More precisely, let $r_1,\ldots, r_d\in (0,2)$. Define, for $x,y\in [-1,1]^d$, 
\[
h_{r_1,\ldots, r_d}(x,y)=\prod_{i=1}^d\one_{\{|x_i-y_i|\le r_i\}}.
\] 
Let $G_n$ be a random graph with $n$ points $\{X_1,\ldots, X_n\}$, where $X_1,\ldots, X_n$  are i.i.d. uniformly distributed random variables in $[-1,1]^d$, such that two vertices  $X_i, X_j$ are connected if, and only if, $h_{r_1,\ldots, r_d}(X_i, X_j)=1$. Let $A_n=(a_{ij})=(h_{r_1,\ldots, r_d}(X_i, X_j))$ be the adjacency matrix of $G_n$. Define 
\[
W_n=D_n^{-\smallhalf}A_nD_n^{\smallhalf},
\]
where $D_n=\mbox{diag}(d_1,\ldots, d_n)$ with $d_i=\sum_{k=1}^{n}a_{ik}$. Then it can be shown that the second largest eigenvalue of $W_n$ is almost surely smaller (larger) than $1/2$ when $r_1,\ldots, r_d\in (1,2)$ (respectively, if $r_i\in (0,1)$ for some $1\le i\le d$) for all large $n$. In order to prove this claim one needs to study the eigenvalues of the integral kernel operator $\K_{r_1,\ldots, r_d}$ in $L^2([-1,1]^d,\nu_d)$ with kernel
\[
K_{r_1,\ldots, r_d}(x,y)=\prod_{i=1}^d\frac{h_{r_i}(x_i,y_i)}{\sqrt{H_{r_i}(x_i)H_{r_i}(y_i)}},\;\; x,y\in [-1,1]^d.
\]
Let $(\lambda_{i,k})_{k\in \N}$, for $i=1,\ldots, d$, be the eigenvalues of $\K_{r_i}$, where $\K_{r_i}$ is an integral kernel operator in $L^2([-1,1], \nu_1)$ with respect to the kernel $$K_{r_i}(x,y)=\frac{h_{r_i}(x,y)}{\sqrt{H_{r_i}(x)H_{r_i}(y)}}, \; \; x,y\in [-1,1].
$$
Then, following  the proof of Lemma \ref{lem:alleigenvalues}, it can be shown that $ (\lambda_{1,k_1}\cdots \lambda_{d,k_d})_{k_1,\ldots, k_d\in \N}$ are the eigenvalues of $\K_{r_1,\ldots, r_d}$. As a consequence, from Lemma \ref{lem:r<1} it follows that the second largest eigenvalue of $\K_{r_1,\ldots, r_d}$ is larger than $1/2$ if $r_i\in (0,1)$ for some $i\in \{1,\ldots, d\}$, and from Lemma \ref{lem:secondevforL} it follows that all the eigenvalues (except $1$) of $\K_{r_1,\ldots, r_d}$ are strictly smaller than $1/2$ when $r_1,\ldots, r_d\in (1,2)$.

Finally, we considered here only the $L^{\infty}$ norm, which made the details of the calculations easier. We conjecture that {qualitatively} similar results  should be true if we replace the $L^{\infty}$-norm by other norms, including   $L^2$.

	\appendix
	
	\section{Proofs of Lemmas \ref{lem:concentration}, \ref{lem:concentration2}, \ref{lem:unif_bound}}\label{app1}
	Let $U_1,\ldots, U_n$ be i.i.d.\ uniformly distributed random variables in $[0,1]$, and  let $U^{(1)}, \ldots, U^{(n)}$ be their order statistics, i.e., $U^{(1)}\le  \cdots\le  U^{(n)}$. It is well known that the $k$-th order statistics is a beta random variable, or more precisely,
	\begin{align*}
	U^{(k)}\sim \mbox{Beta}(k,n+1-k),
	\end{align*}
	which implies that
	\[
	\E[U^{(k)}] = \frac{k}{n+1}.
	\]
	In our situation, we have $X_1,\ldots, X_n$ i.i.d.\ uniformly distributed in $[-1,1]$. Since we can write $X_i = 2U_i -1$, with $U_i$ as above, then  the order statistics $X^{(1)}\le\cdots \le X^{(n)}$ can also be written as $X^{(k)} = 2U^{(k)}-1$. Thus, we have
	\begin{equation}\label{eqn:mean}
	\E[X^{(k)}] = -1 + \frac{2k}{n+1}.
	\end{equation} 
	
	To prove the lemmas we use the sub-Gaussian property of the beta distribution. A random variable $X$ with  finite mean $\mu=\E [X]$ is said to be {\it sub-Gaussian} if there is a $\sigma>0$ such that, for all $\lambda\in \R$,
	\begin{align}\label{eqn:sub_gauss}
	\E[e^{\lambda(X-\mu)}]\le e^{\frac{\lambda^2\sigma^2}{2}}.
	\end{align}
	The constant $\sigma^2$ is called a {\it proxy variance}, and we say that $X$ is $\sigma^2$ sub-Gaussian. 
	
	Let  $X$ be a $\sigma^2$ sub-Gaussian random variable. Then Markov's inequality, together with \eqref{eqn:sub_gauss}, implies that, for any $\lambda, t>0$,
	\begin{align*}
	\P[X-\mu>t]=\P[e^{\lambda(X-\mu)}>e^{\lambda t}]\le e^{-\lambda t+\frac{\lambda^2\sigma^2}{2}}.
	\end{align*}
	Optimizing the upper bound over $\lambda$ yields,
	\begin{align*}
	\P[X-\mu>t]\le e^{-\frac{t^2}{2\sigma^2}}.
	\end{align*}
	Similarly, it can be shown that if $X$ is $\sigma^2$ sub-Gaussian, then, for all $t>0$
	\begin{align*}
	\P[X-\mu<-t]\le e^{-\frac{t^2}{2\sigma^2}}.
	\end{align*}
	Therefore, we conclude that  if $X$ is $\sigma^2$ sub-Gaussian, then, for all $t>0$,
	\begin{align}\label{eqn:chernoff}
	\P[|X-\mu|>t]\le 2e^{-\frac{t^2}{2\sigma^2}}.
	\end{align}
	
	To prove Lemma \ref{lem:concentration} we will use the following result.
	
	\begin{theorem}[Theorem 1 in \cite{arbel}]\label{re:beta}
		The Beta$(\alpha, \beta)$ distribution is  $(4(\alpha+\beta+1))^{-1}$ sub-Gaussian.
	\end{theorem}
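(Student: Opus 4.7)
The plan is to use the standard equivalent formulation: a random variable $X$ with mean $\mu$ is $\sigma^2$ sub-Gaussian if and only if its cumulant generating function $K(\lambda) := \log \E[e^{\lambda(X-\mu)}]$ satisfies $K(\lambda) \le \lambda^2 \sigma^2/2$ for all $\lambda \in \R$. Since $K(0) = K'(0) = 0$, a Taylor expansion with integral remainder gives
\[
K(\lambda) = \int_0^\lambda (\lambda - t) K''(t)\, dt,
\]
so it suffices to prove the pointwise bound $K''(\lambda) \le 1/(4(\alpha+\beta+1))$ for every $\lambda \in \R$ (splitting into $\lambda>0$ and $\lambda<0$ handles both signs symmetrically).

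A routine computation shows $K''(\lambda) = \operatorname{Var}_{P_\lambda}(X)$, where $P_\lambda$ is the exponentially tilted law with density proportional to $e^{\lambda x} x^{\alpha-1}(1-x)^{\beta-1}$ on $[0,1]$. At $\lambda = 0$ this is the ordinary Beta variance $\alpha\beta/[(\alpha+\beta)^2(\alpha+\beta+1)]$, and the AM-GM inequality $\alpha\beta \le (\alpha+\beta)^2/4$ immediately yields $K''(0) \le 1/(4(\alpha+\beta+1))$, with equality precisely when $\alpha=\beta$. This identifies the optimal proxy variance and pins down exactly what must be shown for general $\lambda$: the same uniform bound on $\operatorname{Var}_{P_\lambda}(X)$.

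For the uniform-in-$\lambda$ step, my first approach would be to work directly with the moment generating function $M(\lambda) = {}_1F_1(\alpha;\alpha+\beta;\lambda)$ and analyze
\[
K''(\lambda) = \frac{M''(\lambda)}{M(\lambda)} - \left(\frac{M'(\lambda)}{M(\lambda)}\right)^2,
\]
exploiting the three-term recursions for the moments $\E_{P_\lambda}[X^k]$ that arise from Kummer's confluent hypergeometric equation. The hope is to express $\operatorname{Var}_{P_\lambda}(X)$ in a form that admits monotonicity in $|\lambda|$ and reduces the claim to a one-variable inequality. A complementary angle is a log-concavity argument: when $\alpha,\beta \ge 1$, the tilted density $f_\lambda$ is log-concave on $[0,1]$, so one can invoke a Brascamp-Lieb / Poincar\'e-type inequality on the interval; the small-parameter ranges $\alpha<1$ or $\beta<1$ would then be treated by a boundary analysis or a rearrangement comparison with a symmetric Beta.

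The main obstacle is precisely this uniform-in-$\lambda$ control, because $P_\lambda$ is not itself a Beta distribution, so the convenient closed-form Beta variance is unavailable. One must exploit the analytic structure of the hypergeometric MGF (or a carefully chosen stochastic comparison) to dominate the variance of an arbitrary exponential tilt of a Beta by its value at the extremal symmetric configuration $\alpha=\beta$, $\lambda=0$, where the tight constant $1/(4(\alpha+\beta+1))$ is already saturated.
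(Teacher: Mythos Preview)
The paper does not prove this theorem. It is quoted verbatim as Theorem~1 of \cite{arbel} and used as a black box in the proof of Lemma~\ref{lem:concentration}; no argument for it appears anywhere in the paper. There is therefore nothing in the paper to compare your proposal against.

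As for the proposal itself: the reduction via the Taylor remainder $K(\lambda)=\int_0^\lambda(\lambda-t)K''(t)\,dt$ and the identification $K''(\lambda)=\operatorname{Var}_{P_\lambda}(X)$ are correct and standard, and the AM--GM step at $\lambda=0$ is exactly what pins down the constant. But you explicitly leave open the uniform bound $\operatorname{Var}_{P_\lambda}(X)\le 1/(4(\alpha+\beta+1))$ for all $\lambda$, and neither of your suggested routes is close to a proof. The confluent-hypergeometric recursion does not obviously yield monotonicity of the tilted variance in $|\lambda|$, and the Brascamp--Lieb idea requires $\alpha,\beta\ge 1$ for log-concavity, leaving the full range unhandled. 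So what you have is an outline that correctly isolates the crux but does not resolve it; since the paper simply cites the result, completing this would mean reproducing (or replacing) the argument in \cite{arbel}.
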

	
	We can now prove Lemma \ref{lem:concentration}.
	\begin{proof}[Proof of Lemma \ref{lem:concentration}]
		For any $\delta>0$, we have
		\begin{align*}
		\P\l[\l|X^{(k)}-\E[X^{(k)}]\r|>\delta \r]=	\P\l[\l|U^{(k)}-\E[U^{(k)}]\r|>\frac{\delta}{2} \r].
		\end{align*}
		Recall that $U^{(k)}\sim$Beta$(k,n-k+1)$. Theorem \ref{re:beta} implies that for all $k=1,\ldots, n$, $U^{(k)}$ is sub-Gaussian with $\sigma^2 = (4(n+2))^{-1}$. Therefore, applying \eqref{eqn:chernoff}, we have
		\begin{align*}
		\P\l[\l|X^{(k)}-\E[X^{(k)}]\r|>\frac{1}{n^{1/3}} \r]\le 2e^{-\frac{n^{1/3}}{2}}.
		\end{align*}
		Using the union bound, we have 
		\begin{align*}
		\P\l[\bigcup_{k=1}^n\l\{\l|X^{(k)}-\E[X^{(k)}]\r|>\frac{1}{n^{1/3}}\r\}\r]\le 2n e^{-\frac{n^{1/3}}{2}}.
		\end{align*}
		Thus,
		\begin{align*}
		\sum_{n=1}^{\infty}\P\l[\bigcup_{k=1}^n\l\{\l|X^{(k)}-\E[X^{(k)}]\r|>\frac{1}{n^{1/3}}\r\}\r]<\infty,
		\end{align*}
		and  the result follows from the Borel-Cantelli lemma.
	\end{proof}

		\begin{proof}[Proof of Lemma \ref{lem:concentration2}]
		Let $X^{(\bi)}=(X^{(\bi)}_1,X^{(\bi)}_2)$.
		Since 
		\[
		\sup_{\bi}\l\|X^{(\bi)}-\E[X^{(\bi)}]\r\| = \max_{j=1,2}\sup_{\bi}\l|X_j^{(\bi)}-\E[X_j^{(\bi)}]\r|,
		\]
		we will bound each of the coordinates separately.
		
		We start with $X_1^{(\bi)}$. Recall that $X_1^{(1)}, \ldots, X_1^{(n)}$ are the order statistics of $X_{1,1},\ldots, X_{n,1}$. Using Lemma \ref{lem:concentration}, almost surely there exists $N_1>0$ such that, for  $n\ge N_1$,
		\begin{align*}
		\sup_k\l|X_1^{(k)}-\E[X_1^{(k)}]\r|\le  \frac{1}{n^{1/3}}.
		\end{align*}
		Denote by $E$ the almost-sure event described above and fix $\omega\in E$.
		Next, fix $p$, and recall that $X_1^{(p,1)},\ldots, X_1^{(p,m)}$ is a permutation of $X_1^{((p-1)m + 1)}, \ldots, X_1^{(pm)}$, which implies that, for all $q=1,\ldots, m$, we have that $X_1^{(p,q)} = X_1^{((p-1)m + r)}$, for some $1\le r \le m$.
		Thus,
		\[
		\l| X_1^{(p,q)} - \E[X_1^{(p,q)}]\r| \le \l| X_1^{((p-1)m + r)} - \E[X_1^{((p-1)m + r)}]\r|+ \l|\E[X_1^{((p-1)m + r)}]-\E[X_1^{(p,q)}]\r|.
		\]
		Next, recall that $\E[X_1^{((p-1)m + r)}]=-1 +\frac{2((p-1)m+r)}{n+1}$ (cf.\ \eqref{eqn:mean}), and $\E[X_1^{(p,q)}]= -1+\frac{2p m - m +1}{n+1}$ (cf.\ \eqref{eqn:E_X_pq}).
		Therefore, for $n\ge N_1$, we have 
		\begin{align*}
		\l| X_1^{(p,q)} - \E[X_1^{(p,q)}]\r|  \le &  \frac{1}{n^{1/3}}+\frac{1}{ m}\le \frac{1}{n^{1/6}}.
		\end{align*}
		Since this is true for all $p,q$, we have, for all $n\ge N_1$,
		\begin{align}\label{eqn:b-c1}
		\sup_{\bi}\l|X^{(\bi)}_1 - \E[X^{(\bi)}_1]\r|\le \frac{1}{n^{1/6}}.
		\end{align}
		
		We proceed with bounding $X_2^{(\bi)}$. Suppose that $\underline X_1$ is given. Then, for every $p$, we have that $X_2^{(p,q)}$  is the $q$-th order statistic of $m$ i.i.d.\ uniform random variables in $[-1,1]$. 
		By Lemma \ref{lem:concentration}, for every $p$, almost surely there exists $N_2(p) > 0$ such that, for all $n\ge N_2(p)$, we have 
		\[
		\sup_{k=1,\ldots,m}|X_2^{(p,k)}-\E[X_2^{(p,k)}]|\le \frac{1}{m^{1/3}}=\frac{1}{n^{1/6}}.
		\]
		Taking $N_2 = \max_{1\le p \le m} N_2(p)$, then for $n\ge N_2$ we have
		\begin{align}\label{eqn:b-c2}
		\sup_{\bi}|X_2^{(\bi)}-\E[X_2^{(\bi)}]|\le \frac{1}{m^{1/3}}=\frac{1}{n^{1/6}}.
		\end{align}
		
		To conclude, we showed that almost surely there exists $N = \max(N_1, N_2)$ such that, 
		for all $n\ge N$, both
		\eqref{eqn:b-c1} and  \eqref{eqn:b-c2} hold. This concludes the proof.
	\end{proof}

	\begin{proof}[Proof of Lemma \ref{lem:unif_bound}]
	Fix $1\le k\le d$, and recall that, for every $i_1,\ldots, i_{k-1}$, in Step k of our construction we had that $X_{k}^{(i_1,\ldots,i_{k-1},1)}\le \cdots\le X_{k}^{(i_1,\ldots,i_{k-1},m^{d-k+1})}$ are the order statistics of $m^{d-k+1}$ i.i.d.\ uniformly distributed random variables in $[-1,1]$.  Therefore, by Lemma \ref{lem:concentration}, almost surely there exists $N_k(i_1,\ldots, i_{k-1})$ such that, for all $n\ge N_k(i_1,\ldots, i_{k-1})$, we have
	\begin{align*}
	\sup_{j=1,\ldots,m^{d-k+1}}|X_{k}^{(i_1,\ldots,i_{k-1},j)}-\E[X_{k}^{(i_1,\ldots,i_{k-1},j)}]|\le \frac{1}{m^{(d-k+1)/3}}.
	\end{align*}
	Next, fix $\bi = (i_1,\ldots, i_d)$, and let $E$ be the almost sure event above. Fix $\omega\in E$ and suppose that $n\ge N_k(i_1,\ldots, i_{k-1})$.
	Recall that the variable  $X_{k}^{(\bi)}$ is equal to  one of the variables $X_{k}^{(i_1,\ldots,i_{k-1},(i_{k}-1)m^{d-k}+1)}, \ldots,  X_{k}^{(i_1,\ldots,i_{k-1},i_{k}m^{d-k})}$.
	Let  $r = r(\omega)$ be such that $X_{k}^{(i_1,\ldots, i_d)} =$ $ X_{k}^{(i_1,\ldots,i_{k-1},(i_{k}-1)m^{d-k}+r)}$. Then,
	\begin{align*}
	\l|X_{k}^{(\bi)}-\E[X_{k}^{(\bi)}]\r|\le & \l|X_{k}^{(i_1,\ldots,i_{k-1},(i_{k}-1)m^{d-k}+r)}-\E[X_{k}^{(i_1,\ldots,i_{k-1},(i_{k}-1)m^{d-k}+r)}]\r|
	\\&\qquad\qquad+ \l|\E[X_{k}^{(i_1,\ldots,i_{k-1},(i_{k}-1)m^{d-k}+r)}]-\E[X_{k}^{(i_1,\ldots,i_{d})}]\r|
	\end{align*}
	Since both $\E[X_{k}^{(i_1,\ldots,i_{k-1},(i_{k}-1)m^{d-k}+r)}]$ and $\E[X_{k}^{(i_1,\ldots,i_{d})}]$ lie in $L_{i_{k},m}$, we have 
	\begin{align*}
	\l|\E[X_{k}^{(i_1,\ldots,i_{k-1},(i_{k}-1)m^{d-k}+r)}]-\E[X_{k}^{(i_1,\ldots,i_{d})}]\r|\le \frac{1}{m}.
	\end{align*}
	In addition, since we assume $n\ge N_k(i_1,\ldots,i_{k-1})$, we have
	\[
	\l|X_{k}^{(i_1,\ldots,i_{k-1},(i_{k}-1)m^{d-k}+r)}-\E[X_{k}^{(i_1,\ldots,i_{k-1},(i_{k}-1)m^{d-k}+r)}]\r|\le \frac{1}{m^{(d-k+1)/3}},
	\]
	and therefore,
	\[
	\l|X_{k}^{(\bi)}-\E[X_{k}^{(\bi)}]\r|\le  \frac{1}{m^{(d-k+1)/3}}+\frac{1}{m}\le \frac{1}{n^{1/3d}}.
	\]
	Taking $N_k = \max_{i_1,\ldots,i_{k-1}} N_k(i_1,\ldots,i_{k-1})$ and $n\ge N_k$, we  have
	\begin{equation}\label{eqn:bound_k}
	\sup_{\bi}\l|X_{k}^{(\bi)}-\E[X_{k}^{(\bi)}]\r|\le \frac{1}{n^{1/3d}}.
	\end{equation}
	Finally, let $N=\max\{N_k\; : \; 1\le k\le d\}$. 
	Then \eqref{eqn:bound_k} holds for all $n\ge N$, and we are done.
\end{proof}

	\section{Integral kernel operators}\label{sec:appendix_spec}
	%
	%

	We now  provide a proof for Lemma \ref{lem:szegedy}, which extends Lemma 1.11 in \cite{szegedy11}. The proof will make use of two lemmas. 
	
	Recall that $\mathcal H$ denotes the Hilbert space $L^2(V,\nu)$. A sequence  $\{f_n\}_{n=1}^{\infty}$ is called {\it weakly convergent} if $\{\langle f_n,g\rangle \}_{n=1}^{\infty}$  converges for every $g\in \mathcal H$.
	
	\begin{lemma}[Lemma 1.10 in \cite{szegedy11}]\label{lem:l2convergence}
		Let $K$ be the cut norm limit of $\{K_n\}_{n=1}^{\infty}$. Let $\{f_n\}_{n=1}^{\infty}$ be a weakly convergent sequence in $\mathcal H$ with limit $f$ such that $\|f_n\|_2=1$ for every $n$ and $\mathcal K_nf_n=\lambda_n f_n$, where $\lim_{n\to \infty}\lambda_n=\lambda\neq 0$. Then $\{f_n\}_{n=1}^{\infty}$ converges in $L_2$ to $f$ and $\mathcal K f=\lambda f$.
	\end{lemma}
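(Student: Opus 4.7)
The plan is to prove both conclusions simultaneously in three steps, the central observation being that the uniform $L^\infty$ bound on the kernels $K_n$ propagates, via the eigenvalue equation and the fact that $\lambda \neq 0$, to a uniform $L^\infty$ bound on the $f_n$. From $f_n = \lambda_n^{-1} \mathcal K_n f_n$, the bound $\|K_n\|_\infty \le C$, $\|f_n\|_2 = 1$, and Cauchy--Schwarz on the probability space $(V,\nu)$ (which gives $\|f_n\|_1 \le \|f_n\|_2$), one obtains $|f_n(x)| \le |\lambda_n|^{-1}\, C$ for every $x$, so $\sup_n \|f_n\|_\infty \le C' < \infty$ for all sufficiently large $n$. (Note that a parallel cut-norm/limit argument gives $\|K\|_\infty \le C$, so in particular $\mathcal K$ is Hilbert--Schmidt on the probability space.)

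Next I would show $(\mathcal K_n - \mathcal K)f_n \to 0$ \emph{strongly} in $L^2$. The key ingredient is the classical comparability
\[
	\|\mathcal F\|_{L^\infty \to L^1} \le 4 \|F\|_\square
\]
between the cut norm of a kernel $F$ and the $L^\infty$-to-$L^1$ operator norm of its integral operator; this follows by splitting any test pair $g,h$ with $\|g\|_\infty,\|h\|_\infty \le 1$ into positive and negative parts and representing each part by a layer-cake integral of indicators of its super-level sets, so that $|\langle \mathcal F g,h\rangle|$ is controlled by the cut norm on each pair of sets. Combined with the bound from the previous paragraph, this yields $\|(\mathcal K_n - \mathcal K)f_n\|_1 \le 4C'\|K_n - K\|_\square \to 0$, and since the crude bound $\|(\mathcal K_n - \mathcal K)f_n\|_\infty \le 2C$ also holds, the interpolation $\|h\|_2^2 \le \|h\|_\infty \|h\|_1$ promotes convergence to $L^2$.

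Because $\mathcal K$ is Hilbert--Schmidt and therefore compact, the weak convergence $f_n \rightharpoonup f$ yields $\mathcal K f_n \to \mathcal K f$ strongly in $L^2$. Writing
\[
	\lambda_n f_n = \mathcal K_n f_n = \mathcal K f_n + (\mathcal K_n - \mathcal K)f_n,
\]
the second step above then gives $\lambda_n f_n \to \mathcal K f$ strongly in $L^2$. Dividing by $\lambda_n \to \lambda \neq 0$ yields $f_n \to \lambda^{-1}\mathcal K f$ strongly; but $f_n \rightharpoonup f$ by hypothesis, and uniqueness of the weak limit then forces $f = \lambda^{-1}\mathcal K f$, which simultaneously delivers $\mathcal K f = \lambda f$ and the $L^2$ convergence $f_n \to f$.

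The main technical obstacle is the cut-norm / $L^\infty$-to-$L^1$ comparability used in the middle step; everything else reduces to soft functional-analytic facts (compactness of Hilbert--Schmidt operators, uniqueness of the weak limit). The hypothesis $\lambda \neq 0$ is used exclusively in the first step, to bound $\|f_n\|_\infty$, and without it the scheme breaks precisely where cut-norm convergence must be converted into strong operator convergence on the moving sequence $\{f_n\}$.
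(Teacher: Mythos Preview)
The paper does not prove this lemma; it is quoted from \cite{szegedy11} and used as a black box in the proof of Lemma~\ref{lem:szegedy}. There is therefore no proof in the present paper to compare your argument against.

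That said, your argument is correct and is essentially the standard route in the graphon literature. The three-step structure---bootstrap a uniform $L^\infty$ bound on the eigenfunctions from the eigen-equation and the bound $\|K_n\|_\infty\le C$, convert cut-norm convergence into $L^\infty\!\to\! L^1$ operator-norm convergence via the layer-cake inequality $\|\mathcal F\|_{L^\infty\to L^1}\le 4\|F\|_\square$, then interpolate to $L^2$ and combine with compactness of $\mathcal K$---is exactly how this result is typically proved. One small point worth stating more carefully: your parenthetical claim that $\|K\|_\infty\le C$ follows from cut-norm convergence of uniformly bounded kernels is true on $V=[0,1]$ (or $B^d$) with Lebesgue measure, since $\big|\iint_{S\times T}K\big|\le C\,\nu(S)\nu(T)$ for all measurable $S,T$ together with Lebesgue differentiation on product rectangles forces $|K|\le C$ a.e.; on an abstract probability space without a differentiation basis this step would require $\|K\|_\infty\le C$ as an explicit hypothesis. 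In the setting of Lemma~\ref{lem:szegedy} and in Szegedy's paper the limiting kernel is bounded by assumption or by construction, so this causes no difficulty here.
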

	
	\begin{proof}[Proof of Lemma \ref{lem:szegedy}]
		 Let $\{\lambda_{n,j}\}_{j=1}^\infty$ be the eigenvalues of $\K_n$, listed with multiplicities. 
		If $\K_n$ is a finite rank operator then we put an infinite number of zeroes at the end.	We assume that $\{|\lambda_{n,j}|\}_{j=1}^\infty$ is a  decreasing sequence. Since $K_n$ is  symmetric, using  the spectral decomposition theorem for $\K_n$, the kernel function $K_n$ can be expressed as 
		\begin{align*}
		K_n(x,y)=\sum_{j=1}^{\infty}\lambda_{n,j}\varphi_{n,j}(x)\varphi_{n,j}(y),
		\end{align*}
		where $\{\varphi_{n,j}\}_{\lambda_{n,j}\ne 0}$ is an orthonormal system in $\mathcal H$. 
		For $\lambda_{n,j}=0$ we take $\varphi_{n,j}$ to be an arbitrarily chosen function of unit length.
		
		Note that 
		\begin{align*}
		\iint |K_n(x,y)|^2d\nu(x)d\nu(y)=\sum_{j=1}^{\infty}\lambda_{n,j}^2,
		\end{align*}
		and since we assume  that $\|K_n\|_{\infty}\le C$ we also have that
		\begin{align*}
		\sum_{j=1}^{\infty}\lambda_{n,j}^2\le C.
		\end{align*}
		For every $j$, $\{\lambda_{n,j} \}_{n=1}^{\infty}$ is bounded. In addition,  every bounded  sequence  in a Hilbert space contains a weakly convergent subsequence. Therefore, for every fixed $j$ we can find  a subsequence $\{n_i\}_{i=1}^{\infty}$ such that $\{\varphi_{n_i,j}\}_{i=1}^{\infty}$ is weakly convergent in $\mathcal H$ and $\{\lambda_{n_i,j}\}_{i=1}^{\infty}$ is convergent. Let $\varphi_j$ be the weak limit of $\{\varphi_{n_i,j}\}_{i=1}^{\infty}$ and $\lambda_j$ be the limit of $\{\lambda_{n_i,j}\}_{i=1}^{\infty}$. Then it can be shown that (a) $\sum_{j=1}^{\infty}\lambda_{j}^2\le C$, and (b)  $\{|\lambda_j| \}_{j=1}^{\infty}$ is a decreasing sequence.
		These two facts  imply that $|\lambda_j|\le \frac{\sqrt C}{\sqrt j}$ for every $j$. Indeed, for $j_0\in \N$,
		\begin{align}\label{eqn:eigenvalue_bound}
		\sum_{j=1}^{j_0}\lambda_{j}^2\le C \imply |\lambda_{j_0}|\le \sqrt{\frac{C}{j_0}}.
		\end{align}
		If $\lambda_j\neq 0$, then Lemma \ref{lem:l2convergence} implies that 
		\begin{align}\label{eqn:re1}
		\lim_{i\to \infty} \int |\varphi_{n_i,j}(x)-\varphi_j(x)|^2d\nu(x)\to 0.
		\end{align}
		Using the triangle and Cauchy-Schwarz inequalities gives that, for $\lambda_{j_1}, \lambda_{j_2}\neq 0$,  we have
		\begin{align*}
		\lim_{i\to \infty}|\langle \varphi_{n_i,j_1},\varphi_{n_i,j_2}\rangle-\langle \varphi_{j_1},\varphi_{j_2}\rangle|=0.
		\end{align*}
		This implies that, if $\lambda_{j_1}$ and $\lambda_{j_2}$ are non-zero, then $\langle \varphi_{j_1},\varphi_{j_2}\rangle=0$.
		Therefore $\{\varphi_j\}_{j: \lambda_{j}\neq 0}$ is an orthogonal system of functions. Define 
		\begin{align*}
		K'(x,y)=\sum_{j: \lambda_j\neq 0}\lambda_j\varphi_j(x)\varphi_j(y).
		\end{align*}
		We show that $K(x,y)=K'(x,y)$. Let $t\in \N$. Then 
		\begin{equation}\label{eqn:diff_norm}
		\begin{split}
		\|K_{n_i}-K'\|_{\square}&\le 	\l\|\sum_{j=1}^t(\lambda_{n_i,j}\varphi_{n_i,j}(x)\varphi_{n_i,j}(y)-\lambda_j\varphi_j(x)\varphi_j(y)\r\|_{\square}
		\\&\qquad\qquad+\l\|\sum_{j=t+1}^{\infty}\lambda_{n_i,j}\varphi_{n_i,j}(x)\varphi_{n_i,j}(y)\r\|_{\square}+\l\|\sum_{j=t+1}^{\infty}\lambda_j\varphi_j(x)\varphi_j(y)\r\|_{\square}.
		\end{split}
		\end{equation}
		To bound the first term in \eqref{eqn:diff_norm}, fix $1\le j\le t$ and 
		note that, since  $\lambda_{n_i,j}\to \lambda_j$, there exists a $i_0>0$ such that, for all $i>i_0$ we have,
		\begin{align}\label{eqn:re2}
		|\lambda_{n_i,j}-\lambda_j|\le \frac{1}{(t+1)^{\frac{3}{2}}}.
		\end{align}	
		Using  \eqref{eqn:re1}, \eqref{eqn:re2} and triangle inequality,  we have  for all $i>i_0$,
		\begin{align*}
		\l\|\sum_{j=1}^t(\lambda_{n_i,j}\varphi_{n_i,j}(x)\varphi_{n_i,j}(y)-\lambda_j\varphi_j(x)\varphi_j(y)\r\|_{\square}\le \frac{C'}{\sqrt{t+1}},
		\end{align*}
		for some positive constant $C'$.  
		
		To bound the other two terms in \eqref{eqn:diff_norm} we use the notion of {\it spectral radius}. Let $M$ be a self-adjoint kernel operator, and $\{\lambda_j(M)\}_{j=1}^\infty$ be its eigenvalues. {Then the spectral radius of $M$ is defined as}
		\[
		\mathrm{rad}(M) = \sup_{j} |\lambda_j(M)|.
		\]
		Lemma 1.5 in \cite{szegedy11} states that $\|M\|_{\square} \le \mathrm{rad}(M)$. Note that since the absolute eigenvalues are decreasing, and using \eqref{eqn:eigenvalue_bound}, for both the second and the third terms in \eqref{eqn:diff_norm} we have that the spectral radius is bounded by 
		$\sqrt{C}/\sqrt{t+1}$.
		
		To conclude, we can show that there exists $C''>0$ such that for a large enough $i$ we have,
		\begin{align*}
		\|K_{n_i}-K'\|_{\square}&\le\frac{C''}{\sqrt{t+1}}.
		\end{align*}
		Letting $t\to \infty$ we get that $K_{n_i}$ converges to $K'$ in the cut norm. Since we also know that $K_{n_i}\to K$, we conclude that $K'\equiv K$.
		
		Take $\lambda>0$ such that $\pm\lambda \not\in \spec(K)$, and let $t$ be an integer greater than $C\lambda^{-2}$. Note that, for $j>t$,
		\begin{align*}
		\lambda_{n_i,j}\le \frac{\sqrt C}{\sqrt j}\le \frac{\sqrt C}{\sqrt t}\le \lambda.
		\end{align*}
		Let $m_t=\min\{|\lambda-|\lambda_j|| \; :\; 1\le j\le t \}>0.$ Note that $m_t>0$,
		as $\lambda$ and $-\lambda$ are not eigenvalues of $K$.
		In addition, there exists $i_0>0$ such that, for all $i>i_0$ and  $1\le j \le t$,
		\begin{align*}
		|\lambda_{n_i,j}-\lambda_j|\le \frac{m_t}{2}.
		\end{align*}
		Therefore, we conclude that $\lambda_j>\lambda \Leftrightarrow \lambda_{n_i,j}>\lambda$ and $\lambda_j<-\lambda  \Leftrightarrow \lambda_{n_i,j}<-\lambda$, for $1\le j\le t$ as $i\to \infty$. Hence \eqref{eqn:szegedy} is true for $\{K_{n_i}\}$. That is,
		\begin{align*}
		\lim_{i\to \infty} |\{\spec(K_{n_i}) \cap (\lambda,\infty)\}| &=|\{\spec(K) \cap (\lambda,\infty)\}|,
		\\\lim_{i\to \infty} |\{\spec(K_{n_i}) \cap (-\infty,-\lambda)\}|&=|\{\spec(K) \cap (-\infty,-\lambda)\}|.
		\end{align*}
		To conclude, we have to show convergence when $n\to \infty$ (as opposed to $n_i\to\infty$).
		Suppose that we can choose an infinite subsequence such that  \eqref{eqn:szegedy} does not hold. This leads to an immediate contradiction, since from such a subsequence we cannot choose a subsequence which satisfies the result, and we are done.
	\end{proof}


	\bibliography{bibgeometry}
	\bibliographystyle{plain}
	
\end{document}